\newtheorem{theorem}{Theorem}[section]
\newtheorem{proposition}[theorem]{Proposition}
\newtheorem{corollary}[theorem]{Corollary}
\newcommand{\msn}{\medskip\noindent}
\newcommand{\RR}{\mathbb{R}}
\newcommand{\R}{\Rp}
\newcommand{\Rnn}{\R^{n\times n}}
\newcommand{\Rp}{\RR_+}
\newcommand{\Rpnn}{\Rp^{n\times n}}
\newcommand{\Rmn}{\R^{m\times n}}
\newcommand{\Rn}{\R^n}
\newcommand{\Nca}{N_c(A)}
\newcommand{\Eca}{E_c(A)}
\newcommand{\spann}{\operatorname{span}}
\newcommand{\diag}{\operatorname{diag}}
\newcommand{\bunity}{1}
\newcommand{\bzero}{0}
\newcommand{\blzero}{{\mathbf 0}}
\newcommand{\caa}{\gamma}
\newcommand{\Attr}{\operatorname{Attr}}
\newcommand{\modd}{\operatorname{mod}}
\newcommand{\bul}{{\mathcal S}}
\newcommand{\bulnn}{\bul^{n\times n}}
\newcommand{\lcm}{\operatorname{lcm}}
\def\Core{Core}
\def\Digr{{\mathcal D}}
\def\crit{{\mathcal C}}
\def\critgraph{{\mathcal C}}
\def\critnodes{c}
\def\cyclasses{\Tilde{c}}
\def\noncrit{\overline{c}}
\def\critcomps{n_c}
\begin{document}
\title{Cyclic classes and attraction cones in max algebra}
\author{Serge\u{\i} Sergeev}
\address{University of Birmingham,
School of Mathematics, Watson Building, Edgbaston B15 2TT, UK}
\email{sergiej@gmail.com}
\thanks{This research was supported by EPSRC grant RRAH12809, 
RFBR grant 08-01-00601 and RFBR/CNRS grant 05-01-02807}
\subjclass[2000]{Primary: 15A48, 15A06 Secondary: 06F15}
\keywords{Max-plus algebra, tropical algebra, diagonal similarity,
cyclicity, imprimitive matrix}

\begin{abstract}
In max algebra it is well-known that the sequence $A^k$,
with $A$ an irreducible square matrix, becomes periodic at
sufficiently large $k$. This raises a number of questions
on the periodic regime of $A^k$ and $A^k\otimes x$, for a given vector $x$.
Also, this leads to the concept of attraction cones in max algebra,
by which we mean sets of vectors with ultimate orbit period not
exceeding a given number.

This paper shows that some of these questions can be solved by matrix
squaring ($A$,$A^2$,$A^4$, ...), analogously to recent
findings of Seman\v{c}\'{\i}kov\'a \cite{Sem-06,Sem-07} concerning
the orbit period in max-min algebra. Hence the computational complexity
of such problems is $O(n^3\log n)$. The main idea is to apply an appropriate
diagonal similarity scaling $A\mapsto X^{-1}AX$,
called visualization scaling, and to study the role of
cyclic classes of the critical graph.

For powers of a visualized matrix in the
periodic regime, we observe remarkable symmetry described by
circulants and their rectangular generalizations. We exploit this symmetry to
derive a system of equations for attraction cone, and
present an algorithm which computes the coefficients of the system.
\end{abstract}
\maketitle

\section{Introduction}

By \textit{max algebra} we understand the analogue of linear algebra
developed over the max-times semiring $\RR_{\max,\times}$ which is
the set of nonnegative numbers $\RR_+$ equipped with the operations
of ``addition'' $a\oplus b:=\max(a,b)$ and the ordinary
multiplication $a\otimes b:=a\times b$. 
Zero and unity of this semiring coincide
with the usual $0$ and $1$. 
The operations of the
semiring are extended to the nonnegative matrices and vectors in the
same way as in conventional linear algebra. That is if $A=(a_{ij})$,
$B=(b_{ij})$ and $C=(c_{ij})$ are matrices of compatible sizes with
entries from $\R$,
we write $C=A\oplus B$ if $c_{ij}=a_{ij}\oplus b_{ij}$ for all $i,j$ and $%
C=A\otimes B$ if $c_{ij}=\sum_{k}^{\oplus }a_{ik}
b_{kj}=\max_{k}(a_{ik} b_{kj})$ for all $i,j$. 
If $A$ is a square matrix over $\R$ then the
iterated product $A\otimes A\otimes
...\otimes A$ in which the symbol $A$ appears $k$ times will be denoted by $%
A^{k}$.

The {\em max-plus semiring}
$\RR_{\max,+}=(\RR\cup\{-\infty\}, \oplus=\max,\otimes=+)$,
developed over the set of real numbers $\RR$ with adjoined
element $-\infty$ and the ordinary addition playing the
role of multiplication, is another
isomorphic ``realization'' of max algebra.  In
particular, $x\mapsto\exp(x)$ yields an isomorphism between
$\RR_{\max,+}$ and $\RR_{\max,\times}$. In the max-plus setting,
the zero element is $-\infty$ and the unity is $0$. 

The {\em min-plus semiring}
$\RR_{\min,+}=(\RR\cup\{+\infty\}, \oplus=\min,\otimes=+)$
is also isomorphic to $\RR_{\max,+}$ and
$\RR_{\max,\times}$. Another well-known semiring
is the {\em max-min semiring} 
$\RR_{\max,\min}=(\RR\cup\{-\infty\}\cup\{\infty\},\oplus=\max,\otimes=\min)$,
see \cite{Gav:04,Sem-06,Sem-07},
but it is {\em not} isomorphic to any of the semirings above.

Max algebraic column spans of nonnegative matrices $A\in\Rpnn$ are
sets of max linear combinations of columns $\bigoplus_{i=1}^n \alpha_i A_{\cdot i}$
with nonnegative coefficients $\alpha_i$. Such column spans are
{\em max cones}, meaning that they are closed under componentwise maximum
$\oplus$ and multiplication by nonnegative scalars. There are important
analogies and links between max cones and convex cones
\cite{CGQS-05,DS-04,SSB,Ser-08}.

The maximum cycle geometric mean $\lambda(A)$, see below
for exact definition, is one of the most 
important charasteristics of a matrix $A\in\Rnn$
in max algebra. In particular, it is the largest eigenvalue
of the spectral problem $A\otimes x=\lambda x$. The cycles at which this
maximum geometric mean is attained, are called {\em critical}. Further,
one consideres the {\em critical graph} $\critgraph(A)$ 
which consists of all nodes and edges that belong
to the critical cycles. This graph is crucial for the description
of eigenvectors \cite{BCOQ,CG:79,HOW:05}.
 
The well-known {\em cyclicity theorem} states that if $A$ is irreducible, then 
the sequence $A^k$ becomes periodic after some finite transient time, and
that the ultimate period of $A^k$ is equal to the cyclicity of the critical
graph \cite{BCOQ,CG:79,HOW:05}. Generalizations to reducible
case, computational complexity issues and important special cases
of this result have
been extensively studied in \cite{BdS,Gav-00,Gav:04,Mol-05,MP-00}.

In this paper we study the behaviour of matrix powers and orbits
$A^k\otimes x$ in the irreducible
case in the periodic regime, i.e., after the periodicity is reached. 
One of the main ideas is to
study the periodicity of {\em visualized} matrices, 
meaning matrices with all entries less than or equal to the maximum cycle
geometric mean. This study provides
a connection to the theory of Boolean matrices \cite{BR,Kim:82}.

In Boolean matrix algebra, one considers components
of imprimitivity of a matrix \cite{BR,Kim:82}, or equivalently, 
cyclic classes of the associated digraph \cite{BV-73}.
In max algebra, cyclic classes of the critical graph have been
considered as an important tool in the proof of the cyclicity theorem
mentioned above, see \cite{HOW:05} Sect.~3.1. 
Recently, the cyclic classes appeared 
in max-min algebra \cite{Sem-06,Sem-07}, where they were
used to study the ultimate periods of orbits and other periodicity problems. 
It was shown that such questions can be solved by matrix squaring ($A$, $A^2$,
$A^4$, $A^8$, ...), which yields computational
complexity $O(n^3\log n)$. 

We show that the problems of computing 
ultimate period and matrix powers
in the periodic regime can be solved by matrix squaring in {\em max algebra},
which yields the same complexity bound $O(n^3\log n)$. This is achieved by exploiting
visualization, and cyclic classes of the critical graph.
Further it turns out that the periodic powers of visualized matrices
have remarkable symmetry described by circulant matrices and their
rectangular generalizations. We use this symmetry to derive a
system of equations for {\em attraction cone}, meaning
the max cone which consists of all vectors $x$ whose ultimate 
period of $A^k\otimes x$ does not exceed a given number.
We also describe extremals of attraction cones
and present an algorithm for computing the coefficients of this
system in the case when $\crit(A)$ is strongly connected.

The contents of the paper are as follows. In Section 2 we revise two important
topics in max algebra, namely the spectral problem and Kleene stars. In Section 3,
we speak of the visualization and the connection to the theory of Boolean matrices
which it provides, see Propositions \ref{vis-exist} and \ref{butkovic}.
In Section 4, we study basic properties of matrix powers
in the periodic regime, see Propositions
\ref{p:lindep} -- \ref{p:ultspan}. The problems
which can be solved by matrix squaring are described in
Theorem \ref{blanka}. In Section 5 we
introduce some useful constructions associated with irreducible
visualized matrices and their powers, namely, core matrix,
$CSR$-representation, and describe their circulant simmetries in 
Proposition~\ref{p:critcirc}.
In Section 6 we derive a concise system for attraction cone,
see Theorem~\ref{mainres}. We
also describe extremals and present an algorithm for computing the coefficients of this
system in the case when $\crit(A)$ is strongly connected.
We conclude with Section 7 which is devoted to numerical examples.

As $\RR_{\max,+}$ and $\RR_{\max,\times}$ are isomorphic, we use 
the possibility
to switch between them, but only when it is really convenient.
Thus, while the theoretical results are obtained over
max-times semiring, which looks more natural in connection with diagonal
matrix scaling and Boolean matrices,
the examples in Section 7 are written over {\em max-plus semiring},
where it is much easier to calculate.

We remark that some aspects of the theory
of attraction spaces have been
investigated in \cite{Bra:93,Dok:08,Mai-95} in certain
special cases. 
Also, the periodicity of max algebraic powers of matrices
can be regarded from the viewpoint of max-plus semigroups as studied
in \cite{Mer-09}.

\section{Two topics in max algebra}

\subsection{Spectral problem}
Let $A\in\Rnn$. Consider the problem of finding
$\lambda\in\R$ and nonzero $x\in\Rn$ such that
\begin{equation}
\label{spectral0}
A\otimes x=\lambda x.
\end{equation}
If for some $\lambda$ there exists a nonzero $x\in\Rn$ which satisfies \eqref{spectral0},
then $\lambda$ is called a {\em max-algebraic eigenvalue} of $A$, and $x$ is a {\em max-algebraic
eigenvector} of $A$ associated with $\lambda$. 
With the zero vector adjoined,
the set of max-algebraic eigenvectors associated
with $\lambda$ forms a max cone, which is called the {\em eigencone} 
associated with $\lambda$.

The largest max-algebraic eigenvalue of $A\in\Rnn$ is equal to
\begin{equation}
\label{lambdatrace}
\lambda(A)=\bigoplus_{k=1}^n (\operatorname{Tr}_{\oplus} A^k)^{1/k},
\end{equation}
where $\operatorname{Tr}_{\oplus}$ is defined by 
$\operatorname{Tr}_{\oplus}(A):=\bigoplus_{i=1}^n a_{ii}$
for any $A=(a_{ij})\in\Rnn$.
Further we explain the graph-theoretic meaning of \eqref{lambdatrace},
assumed that $\lambda(A)\neq\bzero$.

With $A=(a_{ij})\in\Rnn$ we can associate the weighted digraph $\Digr(A)=(N(A),E(A))$,
with the set of nodes $N(A)=\{1,\ldots,n\}$ and the set of edges 
$E(A)=\{(i,j)\mid a_{ij}\neq\bzero\}$ with weights $w(i,j)=a_{ij}$.
Suppose that $\pi =(i_{1},...,i_{p})$ is a path in $\Digr(A)$, then the \textit{%
weight} of $\pi $ is defined to be $w(\pi
,A)=a_{i_{1}i_{2}}a_{i_{2}i_{3}}\ldots a_{i_{p-1}i_{p}}$ if $p>1$, and $1$ if $p=1$.
If $i_1=i_p$ then $\pi$ is called a cycle.
One can check that
\begin{equation*}
\lambda(A)=\max_{\sigma }\mu (\sigma ,A),  \label{mcm}
\end{equation*}%
where the maximization is taken over all cycles in $\Digr(A)$ and
\begin{equation*}
\mu (\sigma ,A)=w(\sigma ,A)^{1/k}  \label{cm}
\end{equation*}%
denotes the \textit{geometric mean} of the cycle $\sigma =(i_{1},...,i_{k},i_{1})$.
Thus $\lambda(A)$ is the {\em maximum cycle geometric mean} of $\Digr(A)$.

$A\in\Rnn$ is {\em irreducible} if for any nodes $i$ and $j$ there exists a path 
in $\Digr(A)$,
which begins at $i$ and ends at $j$. In this case $A$ has a unique max-algebraic eigenvalue
which equals $\lambda(A)$.

Note that $\lambda(\alpha A)=\alpha\lambda(A)$ and hence 
$\lambda(A/\lambda(A))=\bunity$ if $\lambda(A)>0$. 
Unless we need matrices with $\lambda(A)=\bzero$,
we can always assume without loss of generality that 
$\lambda(A)=\bunity$. Such matrices will be called
{\em definite}.

An important relaxation of \eqref{spectral0} is
\begin{equation}
\label{subeigs0}
A\otimes x\leq \lambda x.
\end{equation}
The nonzero vectors $x\in\Rn$ which satisfy \eqref{subeigs0} are 
called {\em subeigenvectors} associated
with $\lambda$. With the zero vector adjoined, 
they form a max cone 
called {\em subeigencone}.
This is a conventionally convex cone, 
meaning that it is closed under the {\em ordinary} addition.
See \cite{SSB} for more details.

The eigencone (resp. subeigencone) of $A$ associated 
with $\lambda(A)$ will be denoted by
$V(A)$ (resp. $V^*(A)$).  

\subsection{Kleene stars} 
Let $A\in\Rnn$. Consider the formal series
\begin{equation}
\label{kleene0}
A^*=I\oplus A\oplus A^2\oplus\ldots,
\end{equation}
where $I$ denotes the identity matrix
with entries
\begin{equation*}
\delta_{ij}=
\begin{cases}
\bunity, &\text{if $i=j$,}\\
\bzero, &\text{otherwise.}
\end{cases}
\end{equation*}
Series \eqref{kleene0} is a max-algebraic analogue of $(I-A)^{-1}$, and
it converges to a matrix with finite entries if and only if $\lambda(A)\leq\bunity$
\cite{BCOQ,Car-71}. In this
case
\begin{equation}
\label{kleene1}
A^*=I\oplus A\oplus A^2\oplus\ldots\oplus A^{n-1},
\end{equation}
which is called the {\em Kleene star} of $A$.

For any $A\in\Rnn$,
\begin{equation}
\label{kleene2}
\text{$A$ is a Kleene star}\ \Leftrightarrow\ A^2=A,\ a_{ii}=\bunity\;\forall i.
\end{equation}
The condition $\lambda(A)\leq\bunity$ suggests that there is a strong interplay
between Kleene stars and spectral problems. To describe this in more detail,
we need the following notions and notation.

A cycle $\sigma$ in $\Digr(A)$ is called {\em critical}, if $\mu(\sigma,A)=\lambda(A)$.
Every node and edge that belongs to a critical cycle is called {\em critical}.
The set of
critical nodes is denoted by $\Nca$, the set of critical
edges is denoted by $\Eca$.  The {\em critical digraph} of
$A$, further denoted by $\critgraph(A)=(\Nca,\Eca)$, is the digraph which
consists of all critical nodes and critical edges of $\Digr(A)$. For definite
$A\in\Rnn$, it follows that $a_{ij}a^*_{ji}\leq\bunity$ \cite{BCOQ}. Further,
\begin{equation}
\label{critrule}
(i,j)\in\Eca\Leftrightarrow a_{ij}a^*_{ji}=\bunity.
\end{equation}

For definite $A\in\Rnn$, the relation between Kleene star, critical graph
and spectral problems is briefly as follows \cite{BCOQ,CG:79,SSB}:
\begin{align}
\label{v*adef}
V^*(A)&=
\spann(A^*)=
\left\{\bigoplus_{i=1}^n 
\alpha_i A_{\cdot i}^*,\ 
\alpha_i\in\R
\right\},\\
\label{vadef}
V(A)&=
\left\{\bigoplus_{i\in\Nca} 
\alpha_i A_{\cdot i}^*,\ 
\alpha_i\in\R
\right\},\\
\label{subeigs1}
x\in V^*(A),\; &
(i,j)\in\Eca\Rightarrow 
a_{ij}x_j=x_i.
\end{align}
Equation \eqref{v*adef} means that $V^*(A)$ is the max-algebraic column span of Kleene
star $A^*$, also called {\em Kleene cone}. 
This cone is convex in conventional sense. 
By \eqref{vadef}, $V(A)$ is the max subcone of $V^*(A)$, 
spanned by the columns with
critical indices. Implication \eqref{subeigs1} means that for any subeigenvector 
$x\in V^*(A)$ and
$i\in\Nca$, the maximum in $\bigoplus_j a_{ij} x_j$ is attained at $j$ such that 
$(i,j)\in\Eca$. In
particular, $(A\otimes x)_i=x_i$ for all $x\in V^*(A)$ and $i\in\Nca$.

Not all columns in \eqref{v*adef} and \eqref{vadef} are necessary. 
Let $\critgraph(A)$
have $\critcomps\in\{1,\ldots,n\}$ strongly 
connected components (s.c.c.) 
$\critgraph_{\mu}$, for $\mu=1,\ldots, \critcomps$. 
It follows from the definition of $\critgraph(A)$ that 
s.c.c. $\critgraph_{\mu}$ are disjoint.
The corresponding node sets will be denoted by $N_{\mu}$. 
Let $\noncrit$ denote 
the number of
non-critical nodes of $\Digr(A)$. It can be shown \cite{BCOQ,CG:79} 
that if $i,j$ belong
to the same s.c.c. of $\critgraph(A)$, 
then the columns $A_{\cdot i}^*$ and $A_{\cdot j}^*$
are multiples of each other. The same 
holds for the rows $A_{i\cdot}^*$ and $A_{j\cdot}^*$.
Hence
\begin{align}
\label{v*adef1}
V^*(A)&=\left\{\bigoplus_{i\in K} \alpha_i A_{\cdot i}^*,\ \alpha_i\in\R\right\}\\
\label{vadef2}
V(A)&=\left\{\bigoplus_{i\in\Nca\cap K} \alpha_i A_{\cdot i}^*,\ \alpha_i\in\R\right\},
\end{align}
where $K$ is any set of indices which contains all non-critical indices
and for every $\critgraph_{\mu}$ there is a unique index of
this component in $K$. 

Consider $A_{KK}^*$, 
the principal submatrix of $A^*$ extracted from the rows and columns with indices in $K$. 
Condition \eqref{kleene2} implies that $A_{KK}^*$ is itself a Kleene star. It
follows from the maximality
of $\critgraph_{\mu}$ that there is a unique permutation of $K$ that has 
the greatest weight with respect to
$A_{KK}^*$. The {\em weight of a permutation} $\pi$ of $\{1,\ldots,n\}$ 
with respect to $A\in\Rnn$ is defined as 
$\prod_{i=1}^n a_{i\pi(i)}$. Thus $A_{KK}^*$ is {\em strongly regular} in the sense
of Butkovi\v{c} \cite{But-03}. From this it can be deduced that the columns of $A^*$
with indices in $K$ are {\em independent}, 
meaning that none of them can be expressed as a max
combination of the other columns. In other words \cite{BSS-07}, the columns of 
$A^*$ with indices in $K$ (resp., in $\Nca\cap K$) form a {\em basis} of
$V^*(A)$ (resp., of $V(A)$). This basis is essentially unique 
\cite{BSS-07},
meaning that any other basis can be obtained from it by scalar multiplication.

More precisely, the strong regularity of 
$A_{KK}^*$ is equivalent to saying that this basis is 
{\em tropically independent}, hence the 
tropical rank of $A^*$ is equal to
$\critcomps+\noncrit$, see \cite{AGG,Izh-09,Izh-05} for definitions 
and further details.

\section{Visualization and Boolean matrices}
 
\subsection{Visualization}
Consider a positive $x\in\Rn$ and define
\begin{equation}
X=\diag(x):=
\begin{pmatrix}
x_1 &\ldots &\bzero\\
\vdots & \ddots &\vdots\\
\bzero & \ldots & x_n
\end{pmatrix}
\end{equation}
The transformation $A\mapsto X^{-1}AX$ is called a
{\em diagonal similarity scaling} of $A$.
Such transformations do not change $\lambda(A)$
and $\critgraph(A)$ \cite{ES}. They commute with max-algebraic multiplication
of matrices and hence with the operation of taking the Kleene
star. Geometrically, they correspond to automorphisms of
$\Rn$, both in the case of max algebra and in the case of nonnegative
linear algebra. Further we define scalings which lead to particularly
convenient forms of matrices in max algebra.

A definite matrix $A\in\Rnn$ is called {\em visualized},
if
\begin{align}
\label{visprop1}
&a_{ij}\leq\bunity,\ \forall i,j=1,\ldots, n\\
\label{visprop2}
& a_{ij}=\bunity,\ \forall(i,j)\in\Eca 
\end{align}
A visualized matrix $A\in\Rnn$ is called 
{\em strictly visualized} if
\begin{equation}
\label{strvis}
a_{ij}=\bunity\Leftrightarrow (i,j)\in\Eca.
\end{equation}

Visualization scalings were known already to Afriat \cite{A:63}
and Fiedler-Pt\'{a}k \cite{FP:67}, and motivated extensive study
of matrix scalings in nonnegative linear algebra, see e.g. \cite{ES,ES:75,RSS,SS-91}. 
We remark that some
constructions and facts related to application of visualization
scaling in max algebra 
have been observed in connection with
max algebraic power method \cite{ED-99,ED-01}, 
behaviour of matrix powers \cite{BC-07}
and max-balancing \cite{RSS,SS-91}.

Visualization scalings are described in \cite{SSB} in terms
of the subeigencone $V^*(A)$ and its relative interior.
For the convenience of the reader, we show their
existence for any definite $A\in\Rpnn$.
In the proposition stated below, the summation in part 2. is {\em conventional}.
\begin{proposition}
\label{vis-exist}
Let $A\in\Rpnn$ be definite and $X=\diag(x)$.
\begin{itemize}
\item[1.] If $x=\bigoplus_{i=1}^n A_{\cdot i}^*$
then $X^{-1}AX$ is visualized.
\item[2.] If $x=\sum_{i=1}^n A_{\cdot i}^*$
then $X^{-1}AX$ is strictly visualized.
\end{itemize}
\end{proposition}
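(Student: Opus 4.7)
\emph{Proof proposal.} The plan is to identify the defining vector $x$ in each part as an element of the subeigencone $V^{*}(A)$, and then read off the visualization properties from \eqref{subeigs1} together with \eqref{critrule}. In both parts $x$ is strictly positive, because $a^{*}_{ii}=\bunity$ forces $x_{i}\geq\bunity$, so $X=\diag(x)$ is a legitimate scaling.

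For part~1, the vector $x=\bigoplus_{i=1}^{n}A^{*}_{\cdot i}$ is a max-combination of columns of $A^{*}$, hence lies in $V^{*}(A)$ by \eqref{v*adef}. Therefore $A\otimes x\leq x$, which on dividing by $x_{i}$ yields $a_{ij}x_{j}/x_{i}\leq\bunity$, proving \eqref{visprop1}. Property \eqref{visprop2} is immediate from \eqref{subeigs1}: any $(i,j)\in\Eca$ satisfies $a_{ij}x_{j}=x_{i}$.

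For part~2, since $V^{*}(A)$ is closed under ordinary addition, the conventional sum $x=\sum_{i=1}^{n}A^{*}_{\cdot i}$ lies in $V^{*}(A)$ as well, and the reasoning above already gives \eqref{visprop1} and the direction $(i,j)\in\Eca\Rightarrow a_{ij}x_{j}=x_{i}$ of \eqref{strvis}. The delicate converse $a_{ij}x_{j}=x_{i}\Rightarrow(i,j)\in\Eca$ is where I expect the main difficulty, and it is also precisely where the ordinary sum becomes essential. Starting from the column-by-column inequalities $a_{ij}a^{*}_{jk}\leq a^{*}_{ik}$, valid for every $k$ because $A\otimes A^{*}\leq A^{*}$, summation over $k$ gives $a_{ij}x_{j}\leq x_{i}$; the assumed equality then forces equality in each summand, i.e.\ $a_{ij}a^{*}_{jk}=a^{*}_{ik}$ for every $k$. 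Specializing to $k=i$ yields $a_{ij}a^{*}_{ji}=a^{*}_{ii}=\bunity$, and then \eqref{critrule} concludes $(i,j)\in\Eca$. The structural reason the max-combination in part~1 does not give strict visualization is precisely that this termwise-cancellation step fails for $\oplus$; with the conventional sum of nonnegative reals it is automatic.
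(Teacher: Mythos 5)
Your proof is correct and follows essentially the same route as the paper: identify $x$ as a positive element of $V^*(A)$, deduce \eqref{visprop1} and \eqref{visprop2} from \eqref{subeigs1}, and for strict visualization reduce to the single index $k=i$ where $a_{ij}a^*_{ji}<a^*_{ii}=\bunity$ by \eqref{critrule}. The only cosmetic difference is that you argue the implication $a_{ij}x_j=x_i\Rightarrow(i,j)\in\Eca$ directly via termwise cancellation in the conventional sum, whereas the paper proves the contrapositive by exhibiting one strictly smaller summand; these are the same argument.
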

\begin{proof} 1. Observe that $x\in V^*(A)$ and $x$ is positive.
Then $a_{ij}x_j\leq x_i$ for all $i,j$ implies
$x_i^{-1}a_{ij}x_j\leq 1$, and  by \eqref{subeigs1}
$x_i^{-1}a_{ij}x_j=1$ for all
$(i,j)\in\Eca$.\\
2. Observe that $x$ is positive,
and that $x\in V^*(A)$ since $V^*(A)$ is convex.
Hence $X^{-1}AX$ is visualized.
It remains to check that $(i,j)\notin\Eca$ implies
$a_{ij}x_j<x_i$. We need to find $k$ such that 
$a_{ij}a_{jk}^*<a_{ik}^*$. But this is true for
$k=i$, since $a_{ii}^*=1$ and $a_{ij}a_{ji}^*<1$ by
\eqref{critrule}. This completes the proof. \end{proof}

More precisely \cite{SSB}, $A\in\Rpnn$ can be visualized by any positive 
vector in $V^*(A)$, and it can be strictly visualized by
any vector in the relative interior of $V^*(A)$.

\subsection{Max algebra and Boolean matrices}
Max algebra is related to the algebra of Boolean
matrices. The latter algebra is defined over the Boolean 
semiring $\bul$ which is the set $\{0,1\}$
equipped with logical operations ``OR'' 
$a\oplus b:=a\vee b$ and ``AND'' $a\otimes b:=a\wedge b$.
Clearly, Boolean matrices can be treated as objects of max
algebra, as a very special but crucial case.

For a strongly connected graph, its cyclicity is defined as the g.c.d.
of the lengths of all cycles (or equivalently, all simple
cycles). If the cyclicity is $1$ then the graph
is called {\em primitive}, otherwise it is called
{\em imprimitive}. We will not distinguish
between cyclicity (or primitivity)
of a Boolean matrix $A$ and the associated digraph $\Digr(A)$. 
Further we recall an important result of Boolean matrix theory.

\begin{proposition}[Brualdi and Ryser \cite{BR}]
\label{brualdi}
Let $A\in\bulnn$ be irreducible, and let $\gamma_A$ be the
cyclicity of $\Digr(A)$ (which is strongly connected). Then
for each $k\geq 1$, there exists a permutation matrix $P$
such that $P^{-1}A^kP$ has $r$ irreducible diagonal
blocks, where $r=\gcd(k,\gamma_A)$, and all elements outside
these blocks are zero. The cyclicity of all these
blocks is $\gamma_A/r$.
\end{proposition}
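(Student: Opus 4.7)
The plan is to prove Proposition \ref{brualdi} by exploiting the \emph{cyclic classes} of the strongly connected digraph $\Digr(A)$, which are the primary combinatorial tool associated with imprimitivity. First I would fix a base vertex $v_0$ and define classes $C_0, C_1, \ldots, C_{\gamma_A - 1}$ by declaring $v \in C_t$ iff every directed walk from $v_0$ to $v$ has length congruent to $t$ modulo $\gamma_A$; this is well-defined because any two such walks differ by a combination of cycles, whose lengths are all divisible by $\gamma_A$. Strong connectivity ensures every vertex lies in some $C_t$, and by construction each edge goes from $C_t$ to $C_{t+1 \,\modd\, \gamma_A}$.

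Next I would analyze how $A^k$ acts on these classes. Since an edge of $\Digr(A^k)$ from $i \in C_t$ goes to a vertex in $C_{t+k \,\modd\, \gamma_A}$, the cyclic shift $t \mapsto t+k \,\modd\, \gamma_A$ governs the action. Its orbits on $\{0,\ldots,\gamma_A -1\}$ have size $\gamma_A/r$, where $r=\gcd(k,\gamma_A)$, and there are exactly $r$ such orbits. Let $D_s = \bigcup_{u \geq 0} C_{s+uk \,\modd\, \gamma_A}$ for $s = 0, \ldots, r-1$; each $D_s$ is closed under $A^k$ and the $D_s$ are pairwise disjoint and cover $\{1,\ldots,n\}$. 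Choosing any permutation matrix $P$ that groups vertices of each $D_s$ together produces the claimed block-diagonal form with $r$ blocks.

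It then remains to show that the induced subdigraph on each $D_s$ (with the $A^k$-edges) is strongly connected and has cyclicity $\gamma_A/r$. For strong connectivity, given $i,j \in D_s$ with $i \in C_{s+t_1 k}$ and $j \in C_{s+t_2 k}$, every walk from $i$ to $j$ in $\Digr(A)$ has length $\equiv (t_2 - t_1) k \pmod{\gamma_A}$. A standard consequence of strong connectivity plus cyclicity $\gamma_A$ is that \emph{all sufficiently large} integers with this residue arise as walk lengths from $i$ to $j$; since $(t_2 - t_1)k$ is a multiple of $r$, the congruence $mk \equiv (t_2 - t_1)k \pmod{\gamma_A}$ is solvable, and we can pick $m$ large enough so that a walk of length $mk$ exists. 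That walk is a walk of length $m$ in $\Digr(A^k)$ from $i$ to $j$ staying inside $D_s$, proving irreducibility of the block. For the cyclicity, lengths of closed walks at a vertex $v \in D_s$ in $\Digr(A)$ form a semigroup with gcd $\gamma_A$, and the closed walks corresponding to cycles in $\Digr(A^k)$ are precisely those of length a positive multiple of $k$; these have lengths in $k \mathbb{Z} \cap \gamma_A \mathbb{Z} = \lcm(k, \gamma_A) \mathbb{Z}$, and again all sufficiently large such lengths occur. Dividing by $k$, the achievable cycle lengths in the block have gcd $\gamma_A/r$.

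The main obstacle is the quantitative strong-connectivity statement used in the irreducibility step, namely that for each residue class, all sufficiently large compatible lengths are realized as walk lengths between a prescribed pair of vertices. I would establish this from the numerical-semigroup fact that closed walks at a single vertex achieve all sufficiently large multiples of $\gamma_A$, then transfer to arbitrary endpoints by prepending a fixed walk of minimal compatible length. Once this lemma is in hand, the two remaining verifications (irreducibility and cyclicity $\gamma_A/r$ of each block) both reduce to elementary arithmetic in $\mathbb{Z}/\gamma_A\mathbb{Z}$.
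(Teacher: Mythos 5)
Your argument is correct. Note, however, that the paper offers no proof of Proposition \ref{brualdi} at all: it is quoted as a known result from Brualdi--Ryser \cite{BR}, so there is nothing internal to compare against. What you have written is essentially the classical proof from the literature, built on exactly the tools the paper itself recalls in Section 3.3: the cyclic classes $C_0,\ldots,C_{\gamma_A-1}$ (whose well-definedness is Proposition \ref{ryser}), the observation that $\Digr(A^k)$ shifts classes by $k$ so that the orbits of $t\mapsto t+k$ on $\mathbb{Z}/\gamma_A\mathbb{Z}$ yield $r=\gcd(k,\gamma_A)$ invariant blocks of $\gamma_A/r$ classes each, and the numerical-semigroup fact that all sufficiently large closed-walk lengths in the correct residue class are realized, which gives both irreducibility of each block and the computation of its cyclicity as $\lcm(k,\gamma_A)/k=\gamma_A/r$. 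The one place where your wording is loose is the claim that two walks from $v_0$ to $v$ ``differ by a combination of cycles''; the clean justification is to append a fixed return walk from $v$ to $v_0$ to both, obtaining two closed walks whose lengths are each divisible by $\gamma_A$, whence the original lengths are congruent. This is precisely Proposition \ref{ryser}, so it is not a gap, merely a point to phrase carefully.
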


In max algebra, let $A\in\Rnn$. Define the Boolean matrix
$A^{[C]}=(a_{ij}^{[C]})$ by
\begin{equation}
\label{critmatx}
a_{ij}^{[C]}=
\begin{cases}
1, & (i,j)\in\Eca\\
0, & (i,j)\notin\Eca.
\end{cases}
\end{equation}
Let $A,B\in\Rnn$. Assume that $\critgraph(A)$ has $\critcomps$ s.c.c. 
$\critgraph_{\mu}$ for $\mu=1,\ldots,\critcomps$, with cyclicities $\gamma_{\mu}$.
Denote by $B_{\mu\nu}$ the block of $B$ extracted from the
rows with indices in $N_{\mu}$ and columns with indices in 
$N_{\nu}$.

The following proposition can be seen as a corollary of 
Proposition~\ref{brualdi}. The idea of the proof given below is due to
Hans Schneider. See also \cite{HOW:05} Section 3.1
and \cite{BC-07} Theorem 2.3.

\begin{proposition}
\label{butkovic}
Let $A\in\Rnn$ and $\lambda(A)\neq\bzero$.
\begin{itemize}
\item[1.] $\lambda(A^k)=\lambda^k(A)$.
\item[2.] $(A^{[C]})^k=(A^k)^{[C]}$.
\item[3.] For each $k\geq 1$, there exists a 
permutation matrix $P$ such that 
$(P^{-1}A^kP)_{\mu\mu}^{[C]}$, for each $\mu=1,\ldots,\critcomps,$
has $r_{\mu}:=\gcd(k,\gamma_{\mu})$ irreducible blocks 
and all elements outside
these blocks are zero. The cyclicity of all blocks
in $(P^{-1}A^kP)^{[C]}_{\mu\mu}$ is equal to 
$\gamma_{\mu}/r_{\mu}$.
\end{itemize}
\end{proposition}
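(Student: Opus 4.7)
The plan is to derive part 3 by reducing to the Brualdi--Ryser theorem (Proposition~\ref{brualdi}) applied to each irreducible diagonal block of the Boolean matrix $A^{[C]}$, using parts 1 and 2 as the bridge between max-algebraic powers of $A$ and Boolean powers of $A^{[C]}$.

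For part 1, I would argue via the cycle-mean formula. A cycle of length $\ell$ in $\Digr(A^k)$ lifts to a closed walk of length $k\ell$ in $\Digr(A)$ with the same total weight; decomposing that walk into simple cycles, each of geometric mean at most $\lambda(A)$, gives $\mu(\sigma, A^k) \leq \lambda(A)^k$ and hence $\lambda(A^k) \leq \lambda(A)^k$. The reverse inequality follows because any max-algebraic eigenvector $x$ of $A$ with eigenvalue $\lambda(A)$ satisfies $A^k \otimes x = \lambda(A)^k x$, so $\lambda(A)^k$ is an eigenvalue of $A^k$ and therefore at most $\lambda(A^k)$.

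For part 2, I would characterize $(i,j) \in \Eca^{k}$ (i.e., a critical edge of $A^k$) as exactly those pairs connected by a walk of length $k$ in $\critgraph(A)$. If such a walk exists, its weight is $\lambda(A)^k$, which matches the maximum possible value of the $(i,j)$ entry of $A^k$ by part 1; prolonging it along a critical cycle through $i$ in $\critgraph(A)$ yields a critical cycle of $\Digr(A^k)$ containing $(i,j)$. Conversely, if $(i,j)$ lies on a critical cycle of $\Digr(A^k)$, the length-$k$ walk in $\Digr(A)$ realizing the entry $\lambda(A)^k$ must traverse only critical edges, since any non-critical step would strictly reduce the product. This identification is precisely $(A^{[C]})^k_{ij} = (A^k)^{[C]}_{ij}$.

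For part 3, disjointness of the components $\critgraph_\mu$ implies that $A^{[C]}$ is block-diagonal after grouping the critical indices by $N_\mu$, so $((A^{[C]})^k)_{\mu\mu} = (A^{[C]}_{\mu\mu})^k$ and the cross-blocks vanish. Each $A^{[C]}_{\mu\mu}$ is an irreducible Boolean matrix of cyclicity $\gamma_\mu$, so Proposition~\ref{brualdi} supplies a permutation $P_\mu$ of $N_\mu$ such that $(P_\mu^{-1} A^{[C]}_{\mu\mu} P_\mu)^k$ has $r_\mu = \gcd(k, \gamma_\mu)$ irreducible diagonal blocks of cyclicity $\gamma_\mu / r_\mu$ and zeros elsewhere. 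Assembling these $P_\mu$ with the identity on non-critical indices gives the global $P$, and part 2 transfers the resulting structure to $(P^{-1} A^k P)^{[C]}_{\mu\mu}$. The step I expect to be the main obstacle is the careful verification in part 2 that maximizing walks use only critical edges and that such walks can be closed into critical cycles of $\Digr(A^k)$; once that bridge is in place, part 3 is a direct componentwise invocation of Proposition~\ref{brualdi}.
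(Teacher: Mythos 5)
Your overall strategy --- reduce everything to the Boolean identity $(A^{[C]})^k=(A^k)^{[C]}$ and then invoke Proposition~\ref{brualdi} block by block on the disjoint components $\critgraph_{\mu}$ --- is the same as the paper's, and your parts 1 and 3 are sound (the paper actually extracts part 1 as a by-product of the Boolean block structure rather than proving it first; your direct cycle-decomposition argument is a legitimate alternative ordering). The genuine gap is in part 2: every weight computation you make there is valid only for a \emph{strictly visualized} matrix, and you never perform that reduction. For a general definite $A$, a critical edge $(i,j)$ satisfies $a_{ij}a^*_{ji}=\bunity$ but $a_{ij}$ itself need not equal $\bunity$, so a length-$k$ walk in $\critgraph(A)$ does not have weight $\lambda^k(A)$; the entries of $A^k$ are not bounded by $\lambda^k(A)$ (part 1 controls cycle geometric means, not individual off-diagonal entries); and a non-critical edge can carry a strictly larger weight than a critical one, so in your converse direction the maximizing walk realizing $a_{ij}^{(k)}$ has no reason to stay inside $\critgraph(A)$. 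Each of the claims ``its weight is $\lambda(A)^k$'', ``the maximum possible value of the $(i,j)$ entry of $A^k$'', and ``any non-critical step would strictly reduce the product'' is false without visualization.

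The repair is precisely the opening move of the paper's proof: diagonal similarity scalings commute with max-algebraic multiplication and change neither $\lambda(A)$ nor $\critgraph(A)$, and a strict visualization exists by Proposition~\ref{vis-exist}, part 2, so one may assume $a_{ij}\le\bunity$ with equality exactly on $\Eca$. Then $A^{[C]}=A^{[1]}$, the indicator of maximal entries, the identity $(A^{[1]})^k=(A^k)^{[1]}$ is immediate, and both of your directions become correct: a length-$k$ walk in $\critgraph(A)$ has weight $\bunity$, which is the largest value an entry of $A^k$ can attain, while a maximizing walk of weight $\bunity$ must use only edges of weight $\bunity$, hence only critical edges. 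One still needs the small extra step $(A^k)^{[1]}=(A^k)^{[C]}$, which the paper deduces from the irreducibility of the diagonal blocks supplied by Proposition~\ref{brualdi}. A minor secondary point: in your forward direction, ``prolonging along a critical cycle through $i$'' does not by itself close $(i,j)$ into a cycle of $\Digr(A^k)$; you need a return walk from $j$ to $i$ inside $\critgraph(A)$ whose length is a multiple of $k$, which exists by a short congruence argument modulo $\gamma_{\mu}$ but should be stated.
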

\begin{proof} 
We can assume that $A$ is definite.
Further, the diagonal similarity scaling commutes with
max algebraic matrix multiplication and changes
neither $\lambda(A)$ nor $\critgraph(A)$ \cite{ES}, and
by Proposition~\ref{vis-exist}, part 2, there exists a 
strict visualization scaling. Hence we can assume
that $A$ is strictly visualized. In this case $A^{[C]}=A^{[1]}$,
where $A^{[1]}=(a_{ij}^{[1]})$ is defined by
\begin{equation}
a_{ij}^{[1]}=
\begin{cases}
1, & a_{ij}=\bunity,\\
0, & a_{ij}<\bunity.
\end{cases}
\end{equation} 
It is easily seen that $(A^{[1]})^k=(A^k)^{[1]}$. As 
$A^{[1]}=A^{[C]}$, all entries of $A^{[1]}$ outside
the blocks $A_{\mu\mu}^{[1]}$ are zero, which assures
that $(A^{[1]})^k_{\mu\mu}=(A_{\mu\mu}^{[1]})^k$.  

Proposition~\ref{brualdi} implies that part 3. is
true for $(A^{[1]})^k=(A^k)^{[1]}$. This implies that $P^{-1}(A^k)^{[1]}P$
has irreducible blocks and 
$\lambda(A^k)=\bunity$, which shows part 1. Also,
$P^{-1}(A^k)^{[1]}P$ has block structure where all diagonal
blocks are irreducible and all off-diagonal blocks
are zero. This implies $(A^k)^{[C]}=(A^k)^{[1]}$,
and parts 2. and 3. follow immediately.
\end{proof}

\subsection{Cyclic classes}
For a path $P$ in a digraph $G=(N,E)$, where
$N=\{1,\ldots,n\}$, denote by $l(P)$
the length of $P$, i.e., the number of edges traversed by
$P$. 

\begin{proposition}[Brualdi-Ryser \cite{BR}]
\label{ryser}
Let $G=(N,E)$ be a strongly connected digraph with cyclicity $\gamma_G$.
Then the lengths of any two paths connecting $i\in N$ to $j\in N$
(with $i,j$ fixed) are congruent modulo $\gamma_G$.
\end{proposition}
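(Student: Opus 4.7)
The plan is to reduce the statement to the fact that in a strongly connected digraph, the length of every closed walk is divisible by the cyclicity $\gamma_G$. Granting this, the conclusion follows by a short symmetric argument: pick any path $Q$ from $j$ back to $i$ (which exists by strong connectedness), and concatenate with the two given paths $P_1,P_2$ from $i$ to $j$. This produces two closed walks at $i$, of lengths $l(P_1)+l(Q)$ and $l(P_2)+l(Q)$ respectively; both are multiples of $\gamma_G$, so subtracting gives $l(P_1)\equiv l(P_2)\pmod{\gamma_G}$, as desired.

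The core step is therefore to prove the auxiliary claim that every closed walk $W$ in $G$ has length divisible by $\gamma_G$. My approach would be induction on $l(W)$. If $W$ is a simple cycle, its length is a multiple of $\gamma_G$ by the very definition of cyclicity (as the gcd of simple cycle lengths, which here coincides with the gcd of all cycle lengths in the strongly connected component). Otherwise some node is visited twice, which allows $W$ to be split into two strictly shorter closed walks $W_1$ and $W_2$ with $l(W)=l(W_1)+l(W_2)$; the inductive hypothesis applies to each, and divisibility is preserved under addition.

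The main obstacle, such as it is, is this combinatorial decomposition of an arbitrary closed walk into simple cycles; once one pins down the splitting (locate the first repeated node along $W$, cut out the inner loop, and iterate), the arithmetic is routine. One small point of care: the cyclicity is defined via cycles in the digraph-theoretic sense (closed walks with no repeated nodes except the endpoints); the claim that this equals the gcd taken over all closed walks — which is what the induction really uses — needs to be stated, but it is immediate once the decomposition is in hand.

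Finally, I would remark that the argument shows a bit more: for fixed $i,j$, the set of path lengths from $i$ to $j$ forms a single residue class modulo $\gamma_G$, which is the statement actually invoked later when defining cyclic classes of the critical graph. This residue only depends on $i$ and $j$, not on the particular paths chosen, and thus yields a well-defined equivalence relation on nodes via $i\sim j$ iff there is a path from $i$ to $j$ whose length is $\equiv 0 \pmod{\gamma_G}$, setting the stage for the cyclic-class machinery in the sequel.
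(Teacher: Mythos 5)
Your argument is correct. Note that the paper itself offers no proof of this proposition --- it is quoted directly from Brualdi and Ryser \cite{BR} --- so there is no in-paper argument to compare against; your write-up supplies the standard proof. Both steps are sound: concatenating each of $P_1,P_2$ with a fixed return path $Q$ (which exists by strong connectedness) reduces the statement to the claim that every closed walk has length divisible by $\gamma_G$, and that claim follows by your induction, since a non-simple closed walk splits at a repeated node into two strictly shorter closed walks, while a simple cycle has length divisible by $\gamma_G$ because $\gamma_G$ is by definition the g.c.d.\ of the (simple) cycle lengths. The only details worth pinning down in a final write-up are the degenerate length-zero walk (trivially divisible) and the observation, which you make, that the g.c.d.\ over simple cycles equals the g.c.d.\ over all cycles --- consistent with how the paper defines cyclicity.
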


Proposition~\ref{ryser} implies that the 
following {\em equivalence relation} can be defined:
$i\sim j$ if there exists a path $P$ from $i$
to $j$ such that $l(P)\equiv 0(\modd \gamma_G)$. The
equivalence classes of $G$ with respect to this relation
are called {\em cyclic classes} \cite{BV-73, Sem-06, Sem-07}.
The cyclic class of $i$ will be denoted by $[i]$. 

Consider the following {\em access relations} between cyclic
classes: $[i]\to_t[j]$ if there exists a path $P$
from a node in $[i]$ to a node in $[j]$ such that
$l(P)\equiv t(\modd \gamma_G)$. In this case, a path $P$
with $l(P)\equiv t(\modd \gamma_G)$ exists between any
node in $[i]$ and any node in $[j]$. Further, by 
Proposition~\ref{ryser} the length of any path between a node in $[i]$
and a node in $[j]$ is congruent to $t$, so the relation $[i]\to_t [j]$
is well-defined. Classes $[i]$ and $[j]$ will be called {\em adjacent}
if $[i]\to_1 [j]$.

Cyclic classes can be computed in $O(|E|)$ time by
Balcer-Veinott digraph condensation, where $|E|$
denotes the number of edges in $G$. At each step
of this algorithm, we look for all edges which
issue from a certain node $i$, and condense
all end nodes of these edges into a single node. 
A precise description of this method can be found in \cite{BV-73,BR}.
We give an example of its work, see Figures 1 and 2. 
\begin{figure}[h]
\begin{tabular}{ccccccc}
\begin{tikzpicture}[shorten >=1pt,->]
  \tikzstyle{vertex1}=[circle,fill=black!25,minimum size=17pt,inner sep=1pt]
\tikzstyle{vertex2}=[circle,fill=black!40,minimum size=17pt,inner sep=1pt]

\node[vertex2,xshift=0cm,yshift=0cm] (6) at (-150:1.2cm) {$1$};
\foreach \name/\angle/\text in {1/150/2, 2/90/3, 
                                  3/30/4, 4/-30/5, 5/-90/6}
    \node[vertex1,xshift=0cm,yshift=0cm] (\name) at (\angle:1.2cm) {$\text$};
\draw (6) -- (3);
\foreach \from/\to in {1/2,2/3,3/4,4/5,5/6,6/1}
    \draw (\from) -- (\to);
    
\end{tikzpicture}&&
\begin{tikzpicture}[shorten >=1pt,->]
  \tikzstyle{vertex1}=[circle,fill=black!25,minimum size=20pt,inner sep=1pt]
\tikzstyle{vertex2}=[circle,fill=black!40,minimum size=20pt,inner sep=1pt]

\node[vertex1,xshift=-0.71cm,yshift=2.13cm](5){$3$};
\node[vertex2,xshift=0cm,yshift=0cm] (1) at (135:1cm) {$24$};
\foreach \name/\angle/\text in {2/45/5, 
                                  3/-45/6, 4/-135/1}
    \node[vertex1,xshift=0cm,yshift=0cm] (\name) at (\angle:1cm) {$\text$};
\draw (1) .. controls +(110:0.5cm) and +(-110:0.5cm) .. (5);
\draw (5) .. controls +(-70:0.5cm) and +(70:0.5cm) .. (1);
\foreach \from/\to in {1/2,2/3,3/4,4/1}
    \draw (\from) -- (\to);
\end{tikzpicture}&&
\begin{tikzpicture}[shorten >=1pt,->]
  \tikzstyle{vertex1}=[circle,fill=black!25,minimum size=20pt,inner sep=1pt]
\tikzstyle{vertex2}=[circle,fill=black!40,minimum size=20pt,inner sep=1pt]

\node[vertex2,xshift=0cm,yshift=0cm] (2) at (45:1cm) {$35$};

\foreach \name/\angle/\text in {1/135/24,  
                                  3/-45/6, 4/-135/1}
\node[vertex1,xshift=0cm,yshift=0cm] (\name) at (\angle:1cm) {$\text$};
\draw (1) .. controls +(-30:0.5cm) and +(-150:0.5cm) .. (2);
\draw (2) .. controls +(150:0.5cm) and +(30:0.5cm) .. (1);
\foreach \from/\to in {2/3,3/4,4/1}
    \draw (\from) -- (\to);
\end{tikzpicture}&&
\begin{tikzpicture}[shorten >=1pt,->]
  \tikzstyle{vertex1}=[circle,fill=black!25,minimum size=25pt,inner sep=1pt]
\tikzstyle{vertex2}=[circle,fill=black!40,minimum size=25pt,inner sep=1pt]
\node[vertex1,xshift=-1.5cm,yshift=0cm] (1) {$1$};
\node[vertex2,xshift=0cm,yshift=0cm] (2) {$246$};
\node[vertex1,xshift=1.5cm,yshift=0cm] (3) {$35$};
\draw (1) .. controls +(-45:0.7cm) and +(-135:0.7cm) .. (2);
\draw (2) .. controls +(135:0.7cm) and +(45:0.7cm) .. (1);
\draw (2) .. controls +(-45:0.7cm) and +(-135:0.7cm) .. (3);
\draw (3) .. controls +(135:0.7cm) and +(45:0.7cm) .. (2);
\end{tikzpicture}
\end{tabular}
\caption{Balcer-Veinott algorithm\label{f:bvalg1}}
\end{figure}
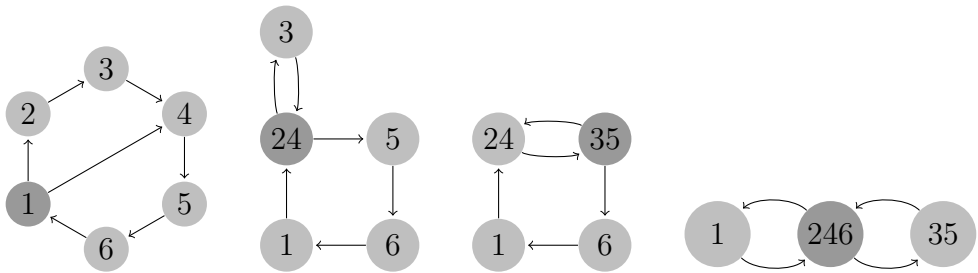

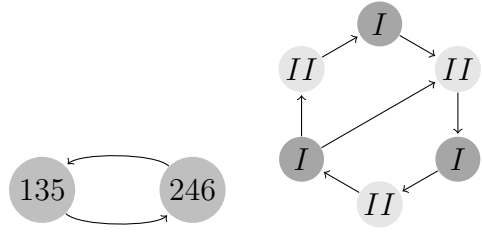
\begin{figure}[h]
\begin{tabular}{ccc}
\begin{tikzpicture}[shorten >=1pt,->]
  \tikzstyle{vertex}=[circle,fill=black!25,minimum size=25pt,inner sep=1pt]
\node[vertex,xshift=-1cm,yshift=0cm] (1) {$135$};
\node[vertex,xshift=1cm,yshift=0cm] (2) {$246$};
\draw (1) .. controls +(-45:0.7cm) and +(-135:0.7cm) .. (2);
\draw (2) .. controls +(135:0.7cm) and +(45:0.7cm) .. (1);
\end{tikzpicture}
&&
\begin{tikzpicture}[shorten >=1pt,->]
  \tikzstyle{vertex1}=[circle,fill=black!10,minimum size=17pt,inner sep=1pt]
\tikzstyle{vertex2}=[circle,fill=black!35,minimum size=17pt,inner sep=1pt]
\foreach \name/\angle/\text in {1/150/II, 
                                  3/30/II,  5/-90/II}
    \node[vertex1,xshift=0cm,yshift=0cm] (\name) at (\angle:1.2cm) {$\text$};
\foreach \name/\angle/\text in { 2/90/I, 
                                   4/-30/I,  6/-150/I}
    \node[vertex2,xshift=0cm,yshift=0cm] (\name) at (\angle:1.2cm) {$\text$};

\draw (6) -- (3);
\foreach \from/\to in {1/2,2/3,3/4,4/5,5/6,6/1}
    \draw (\from) -- (\to);
\end{tikzpicture}
\end{tabular} 
\caption{Result of the algorithm (left) and cyclic classes (right)\label{f:bvalg2}}
\end{figure}

In this example, see Figure 1 at the left,
we start by condensing nodes $2$ and $4$, which are
``next to'' node $1$, into the node $24$. Further we proceed
with condensing nodes $3$ and $5$ into the node $35$. 
In the end, see Figure 2 at the left, there are just two nodes
$135$ and $246$. They correspond to two cyclic classes $\{1,3,5\}$
and $\{2,4,6\}$ of the initial graph, see Figure 2 at the right. 
 
The notion of cyclic classes and access relations 
can be generalized to the
case when $G$ has $\critcomps$ disjoint components $G_{\mu}$
with cyclicities $\gamma_{\mu}$, for $\mu=1,\ldots,\critcomps$ (just like the critical graph
in max algebra).
In this case we write $i\sim j$ if 
$i,j$ belong to the same component and there exists a
path $P$ from $i$ to $j$ such that $l(P)\equiv 0(\modd \gamma_{\mu})$.
If $l(P)\equiv t(\modd \gamma_{\mu})$, then we write $[i]\to_t[j]$.
In this case the {\em cyclicity} of $G$ is 
$\gamma:=\lcm\ \gamma_{\mu},\ \mu=1,\ldots,\critcomps.$

We will be interested in the cyclic classes of critical graphs,
and below we also give an explanation of these, in
terms of the Boolean matrix $A^{[C]}$. Let $A\in\Rnn$.
Following Brualdi and Ryser \cite{BR} we can find such 
ordering of the indices that
any submatrix $A^{[C]}_{\mu\mu}$, which corresponds 
to an imprimitive
component $\critgraph_{\mu}$ of $\critgraph(A)$, will be of the form
\begin{equation}
\label{crit-subm}
\begin{pmatrix}
\blzero & A^{[C]}_{s_1s_2} & \blzero &\cdots & \blzero\\
\blzero & \blzero & A^{[C]}_{s_2s_3} & \cdots & \blzero\\
\vdots & \vdots & \vdots &\ddots &\vdots\\
\blzero & \blzero & \blzero & \cdots & A^{[C]}_{s_{k-1}s_k}\\
A^{[C]}_{s_ks_1} & \blzero &\blzero & \cdots & \blzero
\end{pmatrix},
\end{equation}
where $k$ is the number of cyclic classes in $\critgraph_{\mu}$. 
Indices $s_i$ and $s_{i+1}$ for $i=1,\ldots,k-1$, and 
$s_k$ and $s_1$ correspond to adjacent cyclic classes.
By Proposition \ref{butkovic} part 2, when $A$ is raised
to power $k$, $A^{[C]}$ is also raised to the same power
over the Boolean algebra.
Any power of $A^{[C]}$ has a similar block-permutation
form. In particular, $(A^{\gamma_{\mu}})^{[C]}_{\mu\mu}$ looks like
\begin{equation}
\label{crit-ak}
\begin{pmatrix}
(A^{\gamma_{\mu}})^{[C]}_{s_1s_1} & \blzero & \blzero &\cdots & \blzero\\
\blzero & (A^{\gamma_{\mu}})^{[C]}_{s_2s_2} &\blzero & \cdots & \blzero\\
\vdots & \vdots & \vdots &\ddots &\vdots\\
\blzero & \blzero & \blzero & \cdots & (A^{\gamma_{\mu}})^{[C]}_{s_ks_k}
\end{pmatrix}
\end{equation}

Theorem 5.4.11 of \cite{Kim:82} implies that the sequence
$(A^k)^{[C]}=(A^{[C]})^k$ becomes periodic after $k\leq (n-1)^2+1$, with period
$\caa=\lcm(\gamma_{\mu}),\ \mu=1,\ldots,\critcomps$.
In the periodic regime, all entries of nonzero blocks 
are equal to $1$. 

\section{Periodicity and complexity}

\subsection{Spectral projector and periodicity in max algebra}
We further assume that
the critical graph $\crit(A)$ occupies the first $c$ nodes, i.e.,
that $\Nca=\{1,\ldots,\critnodes\}$.

For a definite $A\in\Rnn$,
consider the matrix $Q(A)$ with entries
\begin{equation}
\label{e:specproj}
q_{ij}=\bigoplus_{k=1}^{\critnodes} a^*_{ik}a^*_{kj},\ i,j=1,\ldots,n.
\end{equation}
The max-linear operator whose matrix is $Q(A)$,
is a max-linear {\em spectral projector}
associated with $A$, in the sense that it projects $\R^n$
on the max-algebraic eigencone $\{x\mid A\otimes x=x\}$
\cite{BCOQ}. We also note that $Q(A)$ is important for the
policy iteration algorithm of \cite{CGG-99}.

We will need the following property of $Q(A)$ which follows
directly from \eqref{e:specproj}.

\begin{proposition}
\label{critkls}
For a definite $A\in\Rnn$, any column (or row)
of $Q(A)$ with index in $1,\ldots,\critnodes$
is equal to the corresponding column (or row) of $A^*$.
\end{proposition}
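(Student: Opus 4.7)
The plan is to prove both inequalities for $q_{ij}$ versus $a^*_{ij}$ separately, using only two ingredients that are already established in the excerpt: first, the Kleene star is idempotent, i.e., $(A^*)^2 = A^*$ (which is immediate from the characterization \eqref{kleene2}, since $A^*$ satisfies $(A^*)^2 = A^*$ and has unit diagonal); second, $a^*_{ii} = \bunity$ for every $i$.

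The upper bound $q_{ij} \leq a^*_{ij}$ is the easy direction, and it does not even require $i$ or $j$ to be critical. Idempotency of $A^*$ gives the identity
\begin{equation*}
a^*_{ij} = \bigoplus_{k=1}^{n} a^*_{ik} a^*_{kj},
\end{equation*}
and the defining formula \eqref{e:specproj} for $q_{ij}$ is the same max taken over the smaller index set $\{1,\ldots,\critnodes\}$, so $q_{ij} \leq a^*_{ij}$ automatically.

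For the reverse bound, suppose $j \in \{1,\ldots,\critnodes\} = \Nca$. Then the index $k = j$ lies in the range of summation in \eqref{e:specproj}, so we may pick out that single term and use $a^*_{jj} = \bunity$ to get
\begin{equation*}
q_{ij} \geq a^*_{ij} a^*_{jj} = a^*_{ij}.
\end{equation*}
Combined with the upper bound, this yields the column statement. The row statement is symmetric: if $i \in \{1,\ldots,\critnodes\}$, isolate the term $k = i$ in \eqref{e:specproj} and use $a^*_{ii} = \bunity$ to obtain $q_{ij} \geq a^*_{ii} a^*_{ij} = a^*_{ij}$.

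There is no real obstacle here — the statement is essentially a diagonal-entry trick combined with idempotency of the Kleene star, and it is recorded as a separate proposition only because the equality $Q(A)_{\cdot j} = A^*_{\cdot j}$ for critical $j$ will be used repeatedly in the sequel.
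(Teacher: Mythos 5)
Your proof is correct, and it is exactly the verification the paper has in mind: the paper offers no written proof, merely asserting that the proposition ``follows directly from \eqref{e:specproj},'' and your two-line argument (the term $k=j$ with $a^*_{jj}=\bunity$ gives $q_{ij}\geq a^*_{ij}$, while idempotency $(A^*)^2=A^*$ gives $q_{ij}\leq a^*_{ij}$) is the natural way to make that explicit. No issues.
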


This operator is closely related
to the periodicity questions.
\begin{theorem}[Baccelli et al. \cite{BCOQ}, Theorem 3.109]
\label{baccelli}
Let $A\in\Rnn$ be irreducible and definite,
and let all s.c.c. of $\crit(A)$ have cyclicity $1$.
Then there is an integer $T(A)$
such that $A^r=Q(A)$ for all $r\geq T(A)$.
\end{theorem}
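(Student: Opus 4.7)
The plan is to first reduce to the case when $A$ is strictly visualized. By Proposition \ref{vis-exist} part 2, such a scaling $A\mapsto X^{-1}AX$ exists; it commutes with max-algebraic matrix multiplication, preserves $\critgraph(A)$ and $\lambda(A)$, and conjugates $A^*$ (and hence $Q(A)$) by the same $X$. So it suffices to prove $A^r=Q(A)$ for strictly visualized $A$. Under this reduction, every entry of $A$ is at most $\bunity$, critical edges carry weight $\bunity$, and non-critical edges carry weight strictly less than $\bunity$. In particular $(A^k)_{ij}$, which equals the maximum weight of a length-$k$ walk from $i$ to $j$ in $\Digr(A)$, lies in $[0,\bunity]$, and equals $\bunity$ iff some length-$k$ walk from $i$ to $j$ lies entirely in $\critgraph(A)$.

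The core of the argument is to prove matching upper and lower bounds $(A^k)_{ij}\leq Q(A)_{ij}$ and $(A^k)_{ij}\geq Q(A)_{ij}$, both valid once $k$ is large. For the upper bound, classify any length-$k$ walk $W$ from $i$ to $j$ according to whether it meets $\Nca$. If $W$ visits some $m\in\Nca$, splitting $W$ at $m$ gives $w(W)\leq a^*_{im}a^*_{mj}\leq Q(A)_{ij}$. If $W$ stays in the complement of $\Nca$, then for $k>\noncrit$ it must traverse at least one cycle contained in that complement; every such cycle has weight at most some fixed $\rho<\bunity$ (the maximum over non-critical cycles), and a standard path-decomposition yields $w(W)\leq\rho^{\lfloor(k-\noncrit)/\noncrit\rfloor}\to 0$, eventually falling below $Q(A)_{ij}$, which is positive since $A$ is irreducible. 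For the lower bound, fix $m^*\in\Nca$ attaining the max in $Q(A)_{ij}=\bigoplus_{m\in\Nca}a^*_{im}a^*_{mj}$, choose walks $\alpha\colon i\to m^*$ and $\beta\colon m^*\to j$ of lengths at most $n-1$ realising $a^*_{im^*}$ and $a^*_{m^*j}$, and concatenate them with a closed critical walk at $m^*$ of length $k-l(\alpha)-l(\beta)$. Such a closed critical walk exists for all sufficiently large $k$, because the s.c.c.\ of $\critgraph(A)$ containing $m^*$ has cyclicity $\bunity$ and is therefore primitive; this is essentially Proposition \ref{butkovic} part 3 together with the classical primitive-matrix theorem, guaranteeing all-ones blocks in $(A^k)^{[C]}_{\mu\mu}$ for $k\geq(n-1)^2+1$. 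Setting $T(A)$ to be the maximum of the thresholds obtained for these three pieces yields $A^r=Q(A)$ for all $r\geq T(A)$.

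The main technical obstacle I expect is the geometric-decay bound for walks avoiding $\Nca$. Making it rigorous requires the uniform bound $\rho<\bunity$ on all non-critical cycle weights (finitely many cycles, each below $\lambda(A)=\bunity$) together with a careful decomposition of any long walk confined to the non-critical subgraph into cycles plus a simple path of length less than $\noncrit$. Visualization is precisely what makes this clean: it forces the spectral gap between critical and non-critical cycles to be visible at the level of individual edges, so that ``weight $\bunity$'' and ``critical'' coincide, and the bookkeeping needed to identify and split off critical segments of a walk becomes elementary.
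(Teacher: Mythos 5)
Your proof is correct, but note that the paper itself offers no proof of this statement to compare against: it is imported verbatim as Theorem~3.109 of Baccelli et al.\ \cite{BCOQ}, and the author only builds on it (e.g.\ to obtain Theorem~\ref{cohen}). What you have written is a sound, self-contained proof along the standard lines of the cyclicity theorem specialized to cyclicity $\bunity$: the reduction to the strictly visualized case is legitimate (diagonal similarity conjugates $A^*$ and hence $Q(A)$, and preserves $\critgraph(A)$); the upper bound $(A^k)_{ij}\leq q_{ij}$ follows from splitting any walk through a critical node $m$ as $w(W)\leq a^*_{im}a^*_{mj}$, while walks avoiding $\Nca$ decay geometrically because every cycle in the non-critical subgraph has weight at most some $\rho<\bunity$ and a length-$k$ walk on $\noncrit$ nodes must carry at least $k-\noncrit$ edges' worth of such cycles, eventually dropping below $q_{ij}>0$ (positivity by irreducibility); and the lower bound uses primitivity of each s.c.c.\ of $\critgraph(A)$ to insert a weight-$\bunity$ closed critical walk of any sufficiently large length at the optimal intermediate node $m^*$. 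The only points worth tightening are cosmetic: in the cycle-decay estimate it is cleaner to bound the total excised cycle weight by $\rho_0^{\,k-\noncrit}$ with $\rho_0$ the maximum non-critical cycle geometric mean (using $w(\sigma)=\mu(\sigma)^{l(\sigma)}$) rather than counting cycles, and you should allow $\alpha$ or $\beta$ to be the empty walk when $i=m^*$ or $j=m^*$. Your argument is essentially the one found in \cite{BCOQ} and in Section~3.1 of \cite{HOW:05}, so you have faithfully reconstructed the external proof the paper relies on.
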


It can be easily shown that Theorem~\ref{baccelli}
can be generalized to the situation when $A$ is in a
blockdiagonal form with irreducible definite
diagonal blocks $A_{11},\ldots,A_{uu}$ so that
\begin{equation}
\label{def:direct-sum}
A=
\begin{pmatrix}
A_{11} &\dots & \blzero\\
\vdots &\ddots &\vdots\\
\blzero &\dots & A_{uu}
\end{pmatrix}
\end{equation}

If $A$ is irreducible and definite, then $A^k$ for all
$k\geq 1$ is in a blockdiagonal form \eqref{def:direct-sum},
where all blocks $A_{11},\ldots,A_{uu}$ are irreducible
and definite.

Indeed, it is evident (when $A$ is assumed
to be visualised), that the m.c.g.m. of each block does not exceed
$1$.
On the other hand,
any column $A^*_{\cdot i}$ with $1\leq i\leq \critnodes$ is a positive
max-algebraic eigenvector
of $A$ and hence of $A^{\gamma}$. This shows that the m.c.g.m.
of each submatrix, being equal to the largest max-algebraic eigenvalue
of that submatrix, cannot be less than $1$.

It follows from Proposition \ref{butkovic} part 3 that
all components of
$\crit(A^{\caa})$ are primitive, where
$\caa$ is the cyclicity of $\critgraph(A)$.

These arguments lead us to the following extension
of Theorem \ref{baccelli}.

\begin{theorem}
\label{cohen}
Let $A\in\Rnn$ be irreducible and definite, and let
$\caa$ be the cyclicity of $\crit(A)$.
There exists $T(A)$ such that
\begin{itemize}
\item[1.] $A^{t\caa}=Q(A^{\caa})$ for all $t\caa\geq T(A)$;
\item[2.] $A^{r+\caa}=A^r$ for all $r\geq T(A)$.
\end{itemize}
\end{theorem}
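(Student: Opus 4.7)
The plan is to reduce to the cyclicity-$1$ case by setting $B := A^\caa$ and invoking the blockdiagonal extension of Theorem~\ref{baccelli} outlined in the paragraph preceding the statement. First, I would record that $\lambda(B) = \lambda(A)^\caa = \bunity$ by Proposition~\ref{butkovic} part~1, so $B$ is definite, and that, as the preceding paragraph explains, $B$ is (after a simultaneous permutation of rows and columns) in the blockdiagonal form~\eqref{def:direct-sum} with irreducible definite diagonal blocks: any critical column $A^*_{\cdot i}$ is a positive max-algebraic eigenvector of $A$ and hence of $B$, so its restriction to each diagonal block is a positive eigenvector of that block with eigenvalue $\bunity$, while the block's maximum cycle geometric mean cannot exceed $\lambda(B) = \bunity$.

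Next, I would apply Proposition~\ref{butkovic} part~3 with $k = \caa$: since $\gamma_\mu \mid \caa$, one has $r_\mu = \gcd(\caa,\gamma_\mu) = \gamma_\mu$, so every s.c.c.\ of $\crit(B)$ has cyclicity $\gamma_\mu / r_\mu = 1$. This places $B$ in the hypotheses of the blockdiagonal extension of Theorem~\ref{baccelli}, which supplies an integer $s_0$ with $B^s = Q(B)$ for all $s \geq s_0$. Setting $T(A) := s_0\,\caa$ and translating back gives $A^{t\caa} = Q(A^\caa)$ whenever $t\caa \geq T(A)$, which is part~1. Part~2 then follows: for any $r \geq T(A)$, write $r = t\caa + s$ with $t\caa \geq T(A)$ and $0 \leq s < \caa$; part~1 yields $A^{t\caa} = A^{(t+1)\caa}$, so
\[
A^{r+\caa} = A^s \otimes A^{(t+1)\caa} = A^s \otimes A^{t\caa} = A^r.
\]

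The only substantive obstacle is the reduction in the first paragraph: confirming that $A^\caa$ genuinely has the blockdiagonal shape with irreducible definite diagonal blocks to which the extension of Theorem~\ref{baccelli} applies. Once that is granted, both parts are routine bookkeeping with Propositions~\ref{butkovic} and~\ref{baccelli}.
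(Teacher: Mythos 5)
Your proposal is correct and follows essentially the same route as the paper: the paper's own justification of Theorem~\ref{cohen} is exactly the reduction to $B=A^{\caa}$, the observation that $A^{\caa}$ is blockdiagonal with irreducible definite diagonal blocks whose critical components are primitive (via Proposition~\ref{butkovic}, part 3), and the blockdiagonal extension of Theorem~\ref{baccelli}. Your derivation of part~2 from part~1 by writing $r=t\caa+s$ is just the routine step the paper leaves implicit.
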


$T(A)$ will be called the {\em transient},
and the powers $A^r$ for $r\geq T(A)$ will be called
{\em periodic powers}.

It is also important that the entries $a_{ij}^{(r)}$, where
$i$ or $j$ are critical, become periodic much faster than
the non-critical part of $A$. The following proposition
is a known result, which
is proved here for convenience of the reader. 

\begin{proposition}[Nachtigall \cite{Nacht}]
\label{p:nacht}
Let $A\in\Rnn$ be a definite irreducible matrix. 
Critical rows and columns of $A^r$ become
periodic for $r\geq n^2$. 
\end{proposition}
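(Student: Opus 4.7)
The plan is to analyze the maximum-weight walks leaving a critical node, observe that any cycles attached in such walks must themselves be critical, and then invoke a Sylvester--Frobenius numerical-semigroup argument.

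By Proposition~\ref{vis-exist}(2) I may assume $A$ is strictly visualized, so that $a_{ij}\le\bunity$ for all $i,j$ and $a_{ij}=\bunity$ iff $(i,j)\in\Eca$; neither $\critgraph(A)$ nor the periodicity of $A^r$ is affected by this diagonal scaling. Now $(A^r)_{ij}$ equals the maximum weight of a walk of length $r$ in $\Digr(A)$ from $i$ to $j$. Any such walk decomposes into a simple path $P$ from $i$ to $j$ together with a family of simple cycles attached at various nodes of $P$. Since $\lambda(A)=\bunity$, every cycle weight is at most $\bunity$, with equality iff the cycle is critical, so the optimum is attained by walks using only critical cycles and
\[
(A^r)_{ij}=\max\bigl\{w(P):P\text{ a simple path }i\to j,\ r-l(P)\in\mathcal{S}(P)\bigr\},
\]
where $\mathcal{S}(P)$ is the numerical semigroup generated by the lengths of the critical cycles that pass through critical nodes of $P$.

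For such a $P$, let $\mathcal{M}(P)$ be the set of indices $\mu$ such that some node of $P$ lies in $N_\mu$. Every generator of $\mathcal{S}(P)$ is a critical cycle length inside $\critgraph_\mu$ for some $\mu\in\mathcal{M}(P)$, hence a multiple of $\gamma_\mu$, and every generator is at most $n$; moreover the gcd of the critical cycle lengths in $\critgraph_\mu$ is exactly $\gamma_\mu$. Thus $\gcd\mathcal{S}(P)=d(P):=\gcd_{\mu\in\mathcal{M}(P)}\gamma_\mu$, which divides $\caa=\lcm_\nu\gamma_\nu$. The classical Sylvester--Frobenius estimate supplies a threshold $F(P)\le n^2$ such that $\mathcal{S}(P)$ contains every multiple of $d(P)$ that exceeds $F(P)$. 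Fixing now $i\in\Nca$, for $r\ge n^2$ and any candidate $P$ with $l(P)\le n-1$ we have $r-l(P)\ge F(P)$, so $r-l(P)\in\mathcal{S}(P)$ iff $d(P)\mid r-l(P)$. Since $\caa$ is a multiple of $d(P)$, the path $P$ is admissible for $r$ iff it is admissible for $r+\caa$; taking the maximum over $P$ yields $(A^r)_{ij}=(A^{r+\caa})_{ij}$ for every $j$, so the critical row $i$ of $A^r$ is $\caa$-periodic once $r\ge n^2$. Applying the same argument to $A^t$, whose critical graph is the reversal of $\critgraph(A)$ and has the same cyclicity, delivers the analogous statement for critical columns.

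The main obstacle is quantitative. One must verify that the displayed formula for $(A^r)_{ij}$ is a genuine equality: the ``$\le$'' direction relies on the weight-decomposition argument (any optimal walk has all attached cycles critical, else its weight drops below $w(P)$), while the ``$\ge$'' direction requires realizing each admissible pair $(P,L)$ with $L\in\mathcal{S}(P)$ as an honest walk by concatenating critical cycles into $P$. In parallel, one has to confirm the Sylvester--Frobenius bound $F(P)\le n^2$; this follows from the generators of $\mathcal{S}(P)$ being critical cycle lengths of size at most $n$, but needs attention when $d(P)=1$, which is the tightest case for the $n^2$ threshold in the proposition.
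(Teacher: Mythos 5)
There is a genuine gap, and it sits exactly where you flag ``the main obstacle'': the claimed identity
\[
(A^r)_{ij}=\max\bigl\{w(P):P\text{ a simple path }i\to j,\ r-l(P)\in\mathcal{S}(P)\bigr\}
\]
is false, because the ``$\le$'' direction does not follow from the weight-decomposition remark. The fact that attaching a non-critical cycle pushes the weight strictly below $w(P)$ does not force the optimal walk of length \emph{exactly} $r$ to use only critical cycles: for certain residues of $r$ no walk of length $r$ decomposes as a simple path plus critical cycles, and then the maximum is attained by a walk that is obliged to traverse a non-critical cycle, yielding a value strictly between your right-hand side and $\max_P w(P)$. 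A concrete max-plus counterexample: take nodes $1,1',2,3$ with edges $1\to1'$ and $1'\to1$ of weight $0$ (so $\critgraph(A)$ is this $2$-cycle), $1\to2$ of weight $-\delta$, a loop $2\to2$ of weight $-\epsilon$, $2\to3$ of weight $0$, and $3\to1$ of weight $-100$; this matrix is irreducible and definite. The only simple path $P$ from $1$ to $3$ is $1\to2\to3$, with $l(P)=2$ and $\mathcal{S}(P)$ generated by $\{2\}$, so for odd $r$ your formula returns the empty maximum, whereas in fact $(A^r)_{13}=-\delta-\epsilon$ for every odd $r\ge3$ (realized by $1\to1'\to\cdots\to1\to2\to2\to3$, which must use the non-critical loop at $2$ once). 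The periodicity conclusion survives in this example, but only because the optimal walk can be prolonged by the critical cycle at its starting node --- which is a different mechanism from the one your formula encodes, so the Frobenius argument built on top of it does not constitute a proof.

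The paper's proof avoids walk decompositions and numerical semigroups altogether. For critical $i$ lying on a critical cycle of length $l$, it first shows $a_{is}^{(kl)}=\bigoplus_{m=1}^{k}a_{is}^{(ml)}$ (prepending the critical cycle gives monotonicity along the subsequence of multiples of $l$), then observes that $a_{is}^{(kl)}$ is the maximal weight of a path of length $k$ in $\Digr(A^l)$, which stabilizes by $k=n$ because $\lambda(A^l)=\bunity$ and paths of length $n$ are not simple; multiplying by $A^d$ for $0\le d\le l-1$ propagates the resulting period $l\le n$ to all residues, giving the threshold $nl\le n^2$. If you want to keep your route, you would need to enlarge the admissible set to account for maximal-weight walks that do use non-critical cycles (essentially re-proving the monotonicity-plus-stabilization step), at which point the Sylvester--Frobenius machinery becomes unnecessary.
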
 
\begin{proof} 
We prove the claim for rows, and for columns
everything is analogous.
Let $i\in \{1,\ldots,\critnodes\}$. Then there is a critical
cycle of length $l$ to which $i$ belongs. Hence
$a_{ii}^{(kl)}=\bunity$ for $k\geq 1$. Since for all
$m<k$ and any $t=1,\ldots,n$ we have 
$$a_{is}^{(ml)}=a_{ii}^{((k-m)l)}a_{is}^{(ml)}\leq a_{is}^{(kl)},$$  
it follows that 
\begin{equation}
\label{entries1}
a_{is}^{(kl)}=\bigoplus_{m=1}^k a_{is}^{(ml)}.
\end{equation}
Entries $a_{is}^{(kl)}$ are maximal weights of paths of length
$k$ with respect to the matrix $A^{l}$. Since the weights 
of all cycles are less than or equal to $\bunity$ and all
paths of length $n$ are not simple, 
the maximum is achieved at $k\leq n$. Using \eqref{entries1}
we obtain that $a_{is}^{((t+1)l)}=a_{is}^{(tl)}$ for
all $t\geq n$. Further,
$$a_{is}^{(tl+d)}=\bigoplus_k a_{ik}^{(tl)}a_{ks}^{(d)},$$
and it follows that $a_{is}^{((t+1)l+d)}=a_{is}^{(tl+d)}$
for all $t\geq n$ and $0\leq d\leq l-1$. Hence
$a_{is}^{(k)}$ is periodic for $k\geq nl$, and
all these sequences, for any $i=1,\ldots,\critnodes$
and any $s$, become periodic for $k\geq n^2$. 
\end{proof} 
The number after which the critical rows and columns
of $A^t$ become periodic will be denoted by $T_c(A)$.

\subsection{The ultimate spans of matrices}
Max algebraic powers in the periodic regime 
have the following properties.

\begin{proposition}
\label{p:lindep}
Let $A\in\Rnn$ be a definite and irreducible matrix, and
let $t\geq 0$ be such that $t\gamma\geq T(A)$.
Then for every integer $l\geq 0$
\begin{equation}
\label{e:lindep}
A_{k\cdot}^{t\gamma+l}=
\bigoplus_{i=1}^c a_{ki}^{(t\gamma)} A_{i\cdot}^{t\gamma+l},\ 
A_{\cdot k}^{t\gamma+l}=\bigoplus_{i=1}^c a_{ik}^{(t\caa)} 
A_{\cdot i}^{t\caa+l},\quad 1\leq k\leq n.
\end{equation}
\end{proposition}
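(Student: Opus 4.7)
The plan is to derive both equalities from three ingredients already established in the excerpt: the periodic identity $A^{r+\gamma}=A^r$ of Theorem~\ref{cohen}(2), the spectral-projector identity $A^{t\gamma}=Q(A^\gamma)$ of Theorem~\ref{cohen}(1), and the fact from Proposition~\ref{critkls} that the rows/columns of $Q(A^\gamma)$ with critical indices coincide with those of $(A^\gamma)^*$. Note that $\crit(A)$ and $\crit(A^\gamma)$ have the same node set $\{1,\ldots,c\}$ by Proposition~\ref{butkovic}(3), and $A^\gamma$ is definite by Proposition~\ref{butkovic}(1), so Proposition~\ref{critkls} applies with $c$ unchanged.

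First I would establish the key ``fixed point'' identity $(A^\gamma)^*\otimes A^{t\gamma+l}=A^{t\gamma+l}$. Writing out $(A^\gamma)^*=I\oplus A^\gamma\oplus\ldots\oplus A^{(n-1)\gamma}$ and using Theorem~\ref{cohen}(2) iteratively, every term $A^{(t+s)\gamma+l}$ with $s\geq 0$ equals $A^{t\gamma+l}$ (since $t\gamma\geq T(A)$ implies $t\gamma+l\geq T(A)$), so the whole max is $A^{t\gamma+l}$. Next, since $t\gamma\geq T(A)$ implies $A^{2t\gamma+l}=A^{t\gamma+l}$ by the same argument, decomposing the power gives
\[
A_{k\cdot}^{t\gamma+l}=A_{k\cdot}^{2t\gamma+l}=\bigoplus_{j=1}^n a_{kj}^{(t\gamma)}\, A_{j\cdot}^{t\gamma+l}.
\]

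Now I would use $A^{t\gamma}=Q(A^\gamma)$ to rewrite each entry $a_{kj}^{(t\gamma)}=\bigoplus_{i=1}^c (a^\gamma)^*_{ki}(a^\gamma)^*_{ij}$ via \eqref{e:specproj}. Substituting and swapping the order of the max operations yields
\[
A_{k\cdot}^{t\gamma+l}=\bigoplus_{i=1}^c (a^\gamma)^*_{ki}\left(\bigoplus_{j=1}^n (a^\gamma)^*_{ij}\,A_{j\cdot}^{t\gamma+l}\right)=\bigoplus_{i=1}^c (a^\gamma)^*_{ki}\, A_{i\cdot}^{t\gamma+l},
\]
where the inner parenthesis is recognized as row $i$ of $(A^\gamma)^*\otimes A^{t\gamma+l}=A^{t\gamma+l}$ by the fixed-point identity above. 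Finally, Proposition~\ref{critkls} applied to $A^\gamma$ gives $(a^\gamma)^*_{ki}=q_{ki}(A^\gamma)=a_{ki}^{(t\gamma)}$ for each critical index $i\in\{1,\ldots,c\}$, which produces exactly the asserted row identity.

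For the column identity I would run the symmetric argument, this time factoring $A^{2t\gamma+l}=A^{t\gamma+l}\otimes A^{t\gamma}$ and using $A^{t\gamma+l}\otimes(A^\gamma)^*=A^{t\gamma+l}$ together with the ``row version'' of Proposition~\ref{critkls}. The only mild subtlety — and the place requiring care — is the identification of critical nodes of $A^\gamma$ with those of $A$ and the verification that the finite-sum form of $(A^\gamma)^*$ is available (which requires $\lambda(A^\gamma)\leq 1$, supplied by Proposition~\ref{butkovic}(1)); beyond this, the proof is a direct assembly of the cited results with no further combinatorial input.
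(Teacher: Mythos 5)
Your proof is correct and rests on the same two pillars as the paper's own argument: the identity $A^{t\gamma}=Q(A^{\gamma})$ from Theorem~\ref{cohen} combined with the rank-$c$ factorization \eqref{e:specproj} through the critical indices, and Proposition~\ref{critkls} to identify $(a^{\gamma})^*_{ki}$ with $a_{ki}^{(t\gamma)}$ for $i\leq c$. The only difference is in the assembly: the paper establishes the case $l=0$ (its equation \eqref{e:lindepprel}) and then simply multiplies that identity by $A^l$, whereas you handle general $l$ directly via the fixed-point identity $(A^{\gamma})^*\otimes A^{t\gamma+l}=A^{t\gamma+l}$ and the factorization $A^{2t\gamma+l}=A^{t\gamma}\otimes A^{t\gamma+l}$ --- a slightly longer but fully equivalent route.
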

\begin{proof}
Due to Theorem \ref{cohen},
for $B=A^{\caa}$ and any $r\geq T(B)$ we have
\begin{equation}
\label{bij}
b_{kj}^{(r)}=\bigoplus_{i=1}^c b_{ki}^*b_{ij}^*,\quad 1\leq k,j\leq n.
\end{equation}
By Theorem \ref{cohen} and Proposition \ref{critkls}, 
we have $b_{ki}^*=b_{ki}^{(r)}=a_{ki}^{(t\caa)}$ and $b_{ij}^*=b_{ij}^{(r)}=a_{ij}^{(t\caa)}$
for all $r\geq T(B)$ or equivalently $t\caa\geq T(A)$, and any $i\leq c$. Hence
\begin{equation}
\label{aijr}
a_{kj}^{(t\gamma)}=\bigoplus_{i=1}^c a_{ki}^{(t\gamma)}a_{ij}^{(t\gamma)},\quad 1\leq k,j\leq n.
\end{equation}
In the matrix notation, this is equivalent to:
\begin{equation}
\label{e:lindepprel}
A_{k\cdot}^{t\gamma}=
\bigoplus_{i=1}^c a_{ki}^{(t\gamma)} A_{i\cdot}^{t\gamma},\ 
A_{\cdot k}^{t\gamma}=\bigoplus_{i=1}^c a_{ik}^{(t\gamma)} 
A_{\cdot i}^{t\gamma},\quad 1\leq k\leq n.
\end{equation}
Multiplying \eqref{e:lindepprel} by any power $A^l$, we
obtain \eqref{e:lindep}.
\end{proof}

In the proof of the next proposition we will use the
following simple principle
\begin{equation}
\label{bell-opt}
a_{ij}^{(r)}a_{jk}^{(s)}\leq a_{ik}^{(r+s)},\quad\forall i,j,k,r,s,
\end{equation}
which holds for the matrix powers in max algebra.

\begin{proposition}
\label{p:bellman2}
Let $A\in\Rnn$ be a definite and irreducible matrix, and
let $i,j\in \{1,\ldots,c\}$ be such that $[i]\to_l[j]$, 
for some $0\leq l<\caa$.
\begin{itemize}
\item[1.] For any $r\geq T_c(A)$, there exists $t_1\geq 0$ 
such that
\begin{equation}
\label{e:bellman2}
a_{ij}^{(t_1\caa+l)} A_{\cdot i}^r=A_{\cdot j}^{(r+l)},\ 
a_{ij}^{(t_1\caa+l)} A_{j\cdot}^r=A_{i\cdot}^{r+l}.
\end{equation}
\item[2.] If $A$ is visualized, then for all $r\geq T_c(A)$
\begin{equation}
\label{e:bellman3}
A_{\cdot i}^r=A_{\cdot j}^{r+l},\ 
A_{j\cdot}^r=A_{i\cdot}^{r+l}.
\end{equation}
\end{itemize}
\end{proposition}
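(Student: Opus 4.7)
The plan is to prove part 2 first, by a direct path-concatenation argument exploiting that in a visualized matrix all critical paths have weight $\bunity$, and then to derive part 1 from part 2 via a visualization scaling supplied by Proposition \ref{vis-exist}.

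For part 2 I would assume $A$ is visualized and let $\mu$ denote the s.c.c.\ of $\crit(A)$ containing $i$ and $j$, with cyclicity $\gamma_\mu\mid\gamma$. Since every critical edge has weight $\bunity$ and every entry of every power of $A$ is $\leq\bunity$ (as $\lambda(A)=\bunity$), any purely critical path has weight $\bunity$. From $[i]\to_l[j]$, combined with Proposition \ref{ryser} applied to critical cycles through $i$ in $\critgraph_\mu$, I would derive $[j]\to_{-l}[i]$, and hence for all sufficiently large $t_1,t_2\geq 0$ there are critical paths from $i$ to $j$ of length $t_1\gamma+l$ and from $j$ to $i$ of length $t_2\gamma-l$, whose weights give $a_{ij}^{(t_1\gamma+l)}=a_{ji}^{(t_2\gamma-l)}=\bunity$.

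The column identity $A_{\cdot i}^r=A_{\cdot j}^{r+l}$ then follows from a pair of inequalities obtained via \eqref{bell-opt}. In one direction, $(A^r)_{ki}\,a_{ij}^{(t_1\gamma+l)}\leq (A^{r+t_1\gamma+l})_{kj}$, and the right-hand side equals $(A^{r+l})_{kj}$ since for $r\geq T_c(A)$ the critical column $A_{\cdot j}^{\bullet}$ is periodic with period dividing $\gamma$ (any such period must divide the eventual global period $\gamma$ coming from Theorem \ref{cohen}). The reverse direction is symmetric, using $a_{ji}^{(t_2\gamma-l)}=\bunity$ and periodicity of the critical column $A_{\cdot i}^{\bullet}$. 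The row identity $A_{j\cdot}^r=A_{i\cdot}^{r+l}$ is obtained by the same argument applied to $A^T$, whose critical graph is the reverse of $\crit(A)$.

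For part 1, pick $X=\diag(x)$ as in Proposition \ref{vis-exist} so that $B:=X^{-1}AX$ is visualized and $A^s=XB^sX^{-1}$, i.e.\ entrywise $(A^s)_{pq}=x_p(B^s)_{pq}x_q^{-1}$. Part 2 applied to $B$ gives $B_{\cdot i}^r=B_{\cdot j}^{r+l}$, which rescales to $A_{\cdot j}^{r+l}=(x_i/x_j)A_{\cdot i}^r$; and since $B$ is visualized and carries a critical path from $i$ to $j$ of length $t_1\gamma+l$, we get $a_{ij}^{(t_1\gamma+l)}=x_i(B^{t_1\gamma+l})_{ij}x_j^{-1}=x_i/x_j$, exactly the claimed scalar. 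The main obstacle is the cyclicity bookkeeping in the multi-component setting: the access relation $[i]\to_l[j]$ is defined modulo $\gamma_\mu$ whereas the statement takes $0\leq l<\gamma$, so one must verify carefully that critical paths of lengths $t_1\gamma+l$ and $t_2\gamma-l$ really exist inside $\critgraph_\mu$, which is where the facts $\gamma_\mu\mid\gamma$ and $[j]\to_{-l}[i]$ do the essential work.
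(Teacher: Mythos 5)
Your proof is correct and follows essentially the same route as the paper: both rest on producing critical paths $i\to j$ and $j\to i$ of lengths $\equiv l$ and $\equiv -l\pmod{\caa}$ whose weights multiply to $\bunity$, and then combining the submultiplicativity \eqref{bell-opt} with the periodicity of critical rows and columns from Proposition \ref{p:nacht} to force the resulting inequalities into equalities. Your only deviations are organizational --- proving the visualized case first and rescaling to obtain part 1, and treating rows via $A^T$ --- whereas the paper proves \eqref{e:bellman2} directly for general definite $A$ via a single collapsing chain of inequalities.
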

\begin{proof}
If $[i]\to_l[j]$ then $[j]\to_s [i]$ where $l+s=\caa$.
By the definition of access relations there exists a critical path
of length $t_1\caa+l$ connecting $i$ to $j$, and a critical path
of length $t_2\caa+s$ connecting $j$ to $i$. Hence
$a_{ij}^{(t_1\caa+l)}a_{ji}^{(t_2\caa+s)}=1$, and in the visualized
case $a_{ij}^{(t_1\caa+l)}=a_{ji}^{(t_2\caa+s)}=1$. Combining this
with \eqref{bell-opt} we obtain
\begin{equation}
\label{bmn-ineqs}
\begin{split}
A_{\cdot i}^r&=A_{\cdot i}^r a_{ij}^{(t_1\caa+l)}a_{ji}^{(t_2\caa+s)}\leq
A_{\cdot j}^{r+t_1\caa+l}a_{ji}^{(t_2\caa+s)}\leq A_{\cdot i}^{r+(t_1+t_2+1)\caa},\\
A_{j\cdot}^r&=A_{j\cdot}^r a_{ij}^{(t_1\caa+l)}a_{ji}^{(t_2\caa+s)}\leq
A_{i\cdot}^{r+t_1\caa+l}a_{ji}^{(t_2\caa+s)}\leq A_{j\cdot}^{r+(t_1+t_2+1)\caa}.
\end{split}
\end{equation}
Since $r\geq n^2$, by Proposition \ref{p:nacht}
$A_{\cdot i}^r=A_{\cdot i}^{r+(t_1+t_2+1)\caa}$ and 
$A_{j\cdot}^r=A_{j\cdot}^{r+(t_1+t_2+1)\caa}$, hence all inequalities
\eqref{bmn-ineqs} are equalities. Multiplying them by $a_{ij}^{(t_1\caa+l)}$
we obtain \eqref{e:bellman2}, which is \eqref{e:bellman3} in the 
visualized case. 
\end{proof}

Proposition \ref{p:bellman2} says that in any power
$A^r$ for $r\geq n^2$, the critical columns (or rows) can be obtained from
the critical columns (or rows) of the spectral projector
$Q(A^{\caa})$ via a permutation whose cycles are determined
by the cyclic classes of $\critgraph(A)$. Proposition \ref{p:lindep}
adds to this that 
all non-critical columns (or rows) of any periodic 
power are in the max cone spanned by the critical columns
(or rows). From this we conclude the following.

\begin{proposition}
\label{p:ultspan}
All powers $A^r$ for $r\geq T(A)$ have the same column span,
which is the eigencone $V(A^{\caa})$.
\end{proposition}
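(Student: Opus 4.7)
The plan is to establish $\spann(A^r)=V(A^{\caa})$ first at multiples of $\caa$ and then to propagate it to every $r\geq T(A)$ by a short induction that exploits the invariance of $V(A^{\caa})$ under the max-algebraic action of $A$.

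For the base case, I would fix $t$ with $t\caa\geq T(A)$, so that $A^{t\caa}=Q(A^{\caa})$ by Theorem~\ref{cohen}, part~1. Formula \eqref{e:specproj} applied to $A^{\caa}$ in place of $A$, combined with Proposition~\ref{critkls}, expresses every column of $Q(A^{\caa})$ as
\[
Q(A^{\caa})_{\cdot j}=\bigoplus_{k\in\Nca}(A^{\caa})^*_{kj}\,(A^{\caa})^*_{\cdot k},
\]
which is a max-combination of critical columns of $(A^{\caa})^*$ and therefore belongs to $V(A^{\caa})$ by \eqref{vadef}. Conversely, for $j\in\Nca$ the same proposition gives $Q(A^{\caa})_{\cdot j}=(A^{\caa})^*_{\cdot j}$, so the critical columns of $A^{t\caa}$ already constitute a spanning set of $V(A^{\caa})$, again by \eqref{vadef}. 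The two inclusions yield $\spann(A^{t\caa})=V(A^{\caa})$.

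For the inductive step, I would first observe that $A\otimes V(A^{\caa})=V(A^{\caa})$. Indeed, if $A^{\caa}\otimes x=x$, then $A^{\caa}\otimes(A\otimes x)=A\otimes(A^{\caa}\otimes x)=A\otimes x$, so $A\otimes x\in V(A^{\caa})$; and writing $x=A\otimes(A^{\caa-1}\otimes x)$ with $A^{\caa-1}\otimes x\in V(A^{\caa})$ gives the reverse inclusion. Since the columns of $A^{r+1}$ are the max-products of $A$ with the columns of $A^r$, the max-linearity of $A\otimes(\cdot)$ yields $\spann(A^{r+1})=A\otimes\spann(A^r)$. Combining this with the invariance just established, induction upward from the base case shows $\spann(A^r)=V(A^{\caa})$ for every $r\geq t\caa$. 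Any remaining value with $T(A)\leq r<t\caa$ is handled by the periodicity $A^{r+\caa}=A^r$ in Theorem~\ref{cohen}, part~2, which gives $\spann(A^r)=\spann(A^{r+k\caa})$ for all $k\geq 0$.

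The main thing to get right is the base case: identifying the column span of the spectral projector $Q(A^{\caa})$ with the eigencone $V(A^{\caa})$ onto which it projects. Once this is clarified through Proposition~\ref{critkls} and \eqref{vadef}, the remainder of the argument is a formal consequence of the fact that $A$ acts as a bijection on $V(A^{\caa})$, bypassing the more detailed column-by-column matching provided by Proposition~\ref{p:bellman2}.
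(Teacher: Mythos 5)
Your proof is correct, but it takes a genuinely different route from the paper. The paper obtains Proposition \ref{p:ultspan} as a direct consequence of the two preceding propositions: Proposition \ref{p:bellman2} shows that the critical columns of any periodic power $A^r$ are obtained from the critical columns of $Q(A^{\caa})$ by a permutation (up to scalars, determined by the cyclic classes), and Proposition \ref{p:lindep} shows that every non-critical column of a periodic power is a max-combination of the critical ones; together these give that every periodic power has column span equal to the span of the critical columns of $(A^{\caa})^*$, which is $V(A^{\caa})$ by \eqref{vadef}. You share the same base-case computation (identifying $\spann(Q(A^{\caa}))$ with $V(A^{\caa})$ via \eqref{e:specproj}, Proposition \ref{critkls} and \eqref{vadef}), but you replace the column-by-column matching of Proposition \ref{p:bellman2} with the abstract observation that $A\otimes(\cdot)$ maps $V(A^{\caa})$ onto itself, so that $\spann(A^{r+1})=A\otimes\spann(A^r)$ propagates the base case to all $r\geq T(A)$. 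What your argument buys is economy: it needs only Theorem \ref{cohen} and the projector formula, not the finer periodic structure. What the paper's argument buys is more information: it exhibits an explicit tropically independent basis of the ultimate span (one critical column per cyclic class) and describes exactly how the columns of successive periodic powers are related, which is then used in the $CSR$-representation and the attraction-cone analysis of Sections 5 and 6. Both proofs are sound; yours would serve as a shorter standalone proof of the span statement itself.
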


Proposition \ref{p:ultspan} enables us to say that 
$V(A^{\caa})$ is the {\em ultimate column span} of
$A$. Similarly, we have the {\em ultimate row span} which is
$V((A^T)^{\caa})$. These cones are generated by 
critical columns (or rows)
of the Kleene star $(A^{\caa})^*$. For a basis of this cone,
we can take any set of columns $(A^{\caa})^*$
(equivalently $Q(A^{\caa})$ or
$A^r$ for $r\geq T(A)$), 
whose indices form a minimal set of representatives
of all cyclic classes of $\critgraph(A)$. This basis is tropically independent
in the sense of \cite{AGG, Izh-05,Izh-09}.

\subsection{Solving periodicity problems by square 
multiplication}
Let $A\in\Rnn$ and $\lambda(A)=\bunity$.
The {\em $t$-attraction cone} $\operatorname{Attr}(A,t)$ is the max cone which consists of
all vectors $x$, for which there exists an integer $r$ such that 
$A^r\otimes x=A^{r+t}\otimes x$, and hence
this is also true for all integers greater than or equal to $r$.
Actually we may speak of any
$r\geq T(A)$, due to the following observation.

\begin{proposition}
\label{allr}
Let $A$ be irreducible and definite.
The systems $A^r\otimes x=A^{r+t}\otimes x$ are equivalent for all $r\geq T(A)$.
\end{proposition}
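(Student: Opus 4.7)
The plan is to use the $\caa$-periodicity of $A^r$ for $r \geq T(A)$ that was established in Theorem \ref{cohen} part 2. It suffices to show that if $A^{r_1}\otimes x = A^{r_1+t}\otimes x$ holds for some $r_1 \geq T(A)$, then $A^{r_2}\otimes x = A^{r_2+t}\otimes x$ holds for every $r_2 \geq T(A)$.

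The ``forward'' case $r_2 \geq r_1$ is immediate: writing $r_2 = r_1 + s$ with $s \geq 0$ and max-algebraically multiplying both sides of $A^{r_1}\otimes x = A^{r_1+t}\otimes x$ on the left by $A^s$ gives the conclusion for $r_2$.

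The ``backward'' case $r_2 < r_1$ (still with $r_2 \geq T(A)$) is the only step requiring a real idea. Set $s = r_1 - r_2 > 0$ and choose any integer $k$ with $k\caa \geq s$. Multiplying the hypothesis on the left by $A^{k\caa - s}$ yields
\begin{equation*}
A^{r_1 + k\caa - s}\otimes x = A^{r_1 + t + k\caa - s}\otimes x,
\end{equation*}
which simplifies to $A^{r_2 + k\caa}\otimes x = A^{r_2 + t + k\caa}\otimes x$. Since both $r_2$ and $r_2 + t$ are at least $T(A)$, iterating Theorem \ref{cohen} part 2 gives $A^{r_2 + k\caa} = A^{r_2}$ and $A^{r_2 + t + k\caa} = A^{r_2 + t}$, so $A^{r_2}\otimes x = A^{r_2 + t}\otimes x$ as required.

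There is no real obstacle: the whole content of the proposition is that the equations $A^r\otimes x = A^{r+t}\otimes x$ live on the $\caa$-periodic tail of the sequence $(A^r)_{r\geq 0}$, so they all cut out the same subset of $\Rn$. The only thing one must be careful about is that the chosen auxiliary exponent $k\caa - s$ is nonnegative and that both shifted exponents $r_2$ and $r_2+t$ lie past $T(A)$ so that the periodicity identity can be applied on both sides.
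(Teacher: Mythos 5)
Your proof is correct and follows essentially the same route as the paper's: propagate the relation forward by left-multiplying with a power of $A$, then use the $\caa$-periodicity of the tail $\{A^r\colon r\geq T(A)\}$ from Theorem~\ref{cohen} to pull the conclusion back to smaller exponents. The paper phrases the backward step as ``find $l>s$ with $A^l=A^k$ and apply the forward case at $l$,'' while you multiply by $A^{k\caa-s}$ and then reduce modulo $\caa$; these are the same mechanism, and your version is if anything slightly more explicit about the nonnegativity of the auxiliary exponent.
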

\begin{proof}
Let $x$ satisfy $A^s\otimes x=A^{s+t}\otimes x$ for some 
$s\geq T(A)$, then it also satisfies
this system for all greater $s$. 
Due to the periodicity, for all $k$ from $T(A)\leq k\leq s$ there
exists $l>s$ such that $A^k=A^l$. Hence $A^k\otimes x=A^{k+t}\otimes x$ also hold for
$T(A)\leq k\leq s$. 
\end{proof}

\begin{corollary}
\label{attrt-attr1}
$\operatorname{Attr}(A,t)=\operatorname{Attr}(A^t,1).$
\end{corollary}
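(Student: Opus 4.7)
The plan is to unfold both definitions and verify the two set inclusions directly; the content beyond this is supplied by Proposition~\ref{allr}, which guarantees that ``there exists $r$'' can be strengthened to ``for all sufficiently large $r$''. Note first that $\lambda(A^t)=\lambda(A)^t=\bunity$, so $\operatorname{Attr}(A^t,1)$ is well-defined.

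For the inclusion $\operatorname{Attr}(A,t)\subseteq\operatorname{Attr}(A^t,1)$, suppose $x\in\operatorname{Attr}(A,t)$, so there is some $s$ (which we can take $\geq T(A)$ by Proposition~\ref{allr}) with $A^s\otimes x=A^{s+t}\otimes x$. Pick any integer $r$ with $rt\geq s$ and multiply both sides of this equality on the left by $A^{rt-s}$. This yields $A^{rt}\otimes x=A^{rt+t}\otimes x$, which is precisely $(A^t)^r\otimes x=(A^t)^{r+1}\otimes x$, so $x\in\operatorname{Attr}(A^t,1)$.

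For the reverse inclusion, suppose $x\in\operatorname{Attr}(A^t,1)$, so there is some $r$ with $(A^t)^r\otimes x=(A^t)^{r+1}\otimes x$. Setting $s:=rt$ turns this identity into $A^s\otimes x=A^{s+t}\otimes x$, which is exactly the condition for $x\in\operatorname{Attr}(A,t)$.

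There is no real obstacle here: the only care needed is to keep the two roles of the index variable $r$ separate on the two sides of the equality, and to invoke Proposition~\ref{allr} when moving $s$ up to a multiple of $t$ so that the factorisation $A^s=A^{rt-s}\otimes A^s$ makes sense with nonnegative exponent. Everything else is a direct manipulation of max-algebraic powers.
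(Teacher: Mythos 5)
Your proof is correct and follows essentially the same route as the paper: both arguments reduce to observing that the defining condition of $\operatorname{Attr}(A,t)$ can be tested at exponents that are multiples of $t$, at which point it coincides verbatim with the defining condition of $\operatorname{Attr}(A^t,1)$. Your version just spells out the two inclusions explicitly (and in fact only needs the upward propagation $A^s\otimes x=A^{s+t}\otimes x\Rightarrow A^{s'}\otimes x=A^{s'+t}\otimes x$ for $s'\geq s$, rather than the full strength of Proposition~\ref{allr}), whereas the paper compresses this into a one-line appeal to that proposition.
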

\begin{proof}
By Proposition \ref{allr}, $\Attr(A,t)$ is solution set to the system
$A^r\otimes x=A^{r+t}\otimes x$ for any $r\geq T(A)$ which is a multiple of $t$,
which proves the statement.
\end{proof}

An equation 
of $A^r\otimes x=A^{r+t}\otimes x$ with index in 
$\{1,\ldots,\critnodes\}$ 
will be called {\em critical},
and the subsystem of the first $c$ equations 
will be called the {\em critical subsystem}.

\begin{proposition}
\label{onlycrit}
Let $A$ be irreducible and definite and let $r\geq T(A)$. Then
$A^r\otimes x=A^{r+t}\otimes x$ is equivalent to its critical subsystem.
\end{proposition}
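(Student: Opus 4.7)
The ``$\Rightarrow$'' direction is immediate: the critical subsystem is obtained by restricting the $n$ equations to the first $c$ indices. The content is in the converse, and the plan is to derive it from Proposition~\ref{p:lindep}, which expresses every row of a sufficiently high periodic power as a max-combination of the first $c$ rows with explicit coefficients.

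The key trick is to pick a convenient $r$. By Proposition~\ref{allr}, the systems $A^r\otimes x=A^{r+t}\otimes x$ are equivalent for all $r\geq T(A)$, so I am free to replace the given $r$ by any larger value. I choose $r_0 = s\caa$ for $s$ large enough that $s\caa\geq T(A)$. Applying Proposition~\ref{p:lindep} first with $l=0$ and then with $l=t$ gives
\begin{equation*}
A_{k\cdot}^{r_0} = \bigoplus_{i=1}^c a_{ki}^{(r_0)}\, A_{i\cdot}^{r_0},
\qquad
A_{k\cdot}^{r_0+t} = \bigoplus_{i=1}^c a_{ki}^{(r_0)}\, A_{i\cdot}^{r_0+t},
\end{equation*}
with the \emph{same} coefficients $a_{ki}^{(r_0)}$ in both identities. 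This is the crucial point: because $r_0$ is a multiple of $\caa$, Proposition~\ref{p:lindep} yields identical coefficients for the two powers.

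Right-multiplying both identities by $x$ gives
\begin{equation*}
(A^{r_0}\otimes x)_k = \bigoplus_{i=1}^c a_{ki}^{(r_0)} (A^{r_0}\otimes x)_i,
\qquad
(A^{r_0+t}\otimes x)_k = \bigoplus_{i=1}^c a_{ki}^{(r_0)} (A^{r_0+t}\otimes x)_i.
\end{equation*}
If the critical subsystem $(A^{r_0}\otimes x)_i=(A^{r_0+t}\otimes x)_i$ for $1\leq i\leq c$ holds, then substituting these equalities termwise yields $(A^{r_0}\otimes x)_k=(A^{r_0+t}\otimes x)_k$ for every $k$. Combined with Proposition~\ref{allr}, this delivers the equivalence for the originally given $r\geq T(A)$.

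The only obstacle worth naming is aligning the coefficients in the two expansions; once $r$ is chosen as a multiple of $\caa$ above $T(A)$, the identity is a one-line consequence of Proposition~\ref{p:lindep} applied twice. No further case analysis or graph-theoretic input is needed.
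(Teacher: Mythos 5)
Your strategy is the paper's own: Proposition~\ref{p:lindep} expresses every non-critical row of a periodic power as a max combination of the critical rows with coefficients that do not depend on the offset $l$, so each non-critical equation of the system is a max combination of critical ones. The problem is the last step. Proposition~\ref{allr} asserts that the \emph{full} systems $A^r\otimes x=A^{r+t}\otimes x$ have the same solution set for all $r\geq T(A)$; it says nothing about the critical \emph{subsystems}, which are genuinely different systems for different residues of $r$ modulo $\caa$ (the critical rows get permuted as $r$ varies). What you have actually proved is: critical subsystem at $r_0=s\caa$ $\Rightarrow$ full system at $r_0$ $\Leftrightarrow$ full system at the original $r$. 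But the proposition requires: critical subsystem \emph{at $r$} $\Rightarrow$ full system at $r$, and nothing in your argument relates the critical subsystem at $r$ to anything at $r_0$. As written, the nontrivial inclusion is established only when $r$ is a multiple of $\caa$.

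The repair is cheap, and it shows that the restriction to $l=0$ was never needed: apply Proposition~\ref{p:lindep} at the given $r$ itself. Write $r=s\caa+l$ with $s\caa\geq T(A)$ and $l\geq 0$ (adding a multiple of $\caa$ to $r$ first if necessary, which changes nothing because $A^r=A^{r+m\caa}$ in the periodic regime, so the system and its critical subsystem are literally unchanged). Then \eqref{e:lindep} gives
$A_{k\cdot}^{s\caa+l}=\bigoplus_{i=1}^{\critnodes} a_{ki}^{(s\caa)}A_{i\cdot}^{s\caa+l}$ and
$A_{k\cdot}^{s\caa+l+t}=\bigoplus_{i=1}^{\critnodes} a_{ki}^{(s\caa)}A_{i\cdot}^{s\caa+l+t}$
with the \emph{same} coefficients $a_{ki}^{(s\caa)}$ on both sides, and your substitution argument closes the proof at $r$ directly; this is what the paper does. (Alternatively, you could keep your reduction to $r_0$ and add that the critical subsystems at $r$ and $r_0$ are equivalent because, by Proposition~\ref{p:bellman2}, the critical rows of $A^{r_0}$ are scalar multiples of a permutation of those of $A^r$; but that is an extra lemma your write-up does not supply.)
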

\begin{proof}
Consider a non-critical component $A^r_{k\cdot}\otimes x=A^{r+t}_{k\cdot}\otimes x$. 
Using \eqref{e:lindep} it can be written as
\begin{equation}
\bigoplus_{i=1}^{\critnodes} 
a_{ki}^{(r)} A^r_{i\cdot}\otimes x=
\bigoplus_{i=1}^{\critnodes} a_{ki}^{(r)} A^{r+t}_{i\cdot}\otimes x,
\end{equation}
hence it is a max combination of equations in the critical subsystem.
\end{proof}

Next we give a bound on the computational complexity of deciding whether
$x\in \Attr(A,t)$, as well as other related problems which we formulate below.

\msn
P1. For a given $x$, decide whether $x\in\Attr(A,t)$.\\
P2. For a given $k:\; 0\leq k<\caa$, compute 
periodic power $A^r$ where $r\equiv k(\modd\;\caa)$.\\
P3. For a given $x$ compute the ultimate period of $\{A^r\otimes x,\; r\geq 0\}$, meaning the
least integer $\alpha$ such that $A^{r+\alpha}\otimes x=A^r\otimes x$
for all $r\geq T(A)$.

The following theorem is analogous
to the results of Seman\v{c}\'{\i}kov\'a \cite{Sem-06,Sem-07}.

\begin{theorem}
\label{blanka}
For any irreducible matrix $A\in\Rnn$, the problems P1-P3
can be solved in $O(n^3\log n)$ time.
\end{theorem}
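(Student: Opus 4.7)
The plan is to reduce all three problems to max-algebraic matrix squaring. First I would handle preprocessing: compute $\lambda(A)$ by Karp's algorithm, rescale $A$ to be definite, form the Kleene star $A^*$ via Floyd--Warshall to extract $\critgraph(A)$, and compute its cyclicity $\caa$ (e.g.\ by Balcer--Veinott condensation on each s.c.c.\ of $\critgraph(A)$). All of these fit within $O(n^3)$.

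The core of the argument is then to precompute the matrices $A, A^2, A^4,\ldots, A^{2^L}$ by iterated squaring, where $L=\lceil\log_2 T(A)\rceil$. Each squaring is a single max-algebraic matrix multiplication of cost $O(n^3)$, so, provided $T(A)$ is polynomial in $n$, this preprocessing totals $O(n^3\log n)$. Given these ``power-of-two'' matrices together with the binary expansion $r=\sum_{i\in S}2^i\leq 2^L$, any specific $A^r$ can be assembled in a further $O(\log n)$ max multiplications, and any matrix--vector product $A^r\otimes x$ in $O(n^2\log n)$ time by $O(\log n)$ successive matrix--vector multiplications against the stored squares.

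With this machinery, P2 is immediate: pick $r=k+m\caa\geq T(A)$ of polynomial size and assemble $A^r$ from the stored squares in $O(n^3\log n)$. P1 reduces, via Corollary~\ref{attrt-attr1}, to checking $A^r\otimes x=A^{r+t}\otimes x$ for any one $r\geq T(A)$; both matrix--vector products cost $O(n^2\log n)$ and are absorbed by preprocessing. For P3 I would use that the orbit period $\alpha$ must divide $\caa$, a consequence of $A^{r+\caa}=A^r$ for $r\geq T(A)$ from Theorem~\ref{cohen}. I factor $\caa\leq n$ in $O(\sqrt n)$ time, set $y:=A^r\otimes x$, and iteratively strip prime factors from the candidate $\alpha$: for each prime $p$ dividing the current value, test whether $A^{\alpha/p}\otimes y=y$ and replace $\alpha$ by $\alpha/p$ if so. At most $O(\log n)$ such tests are required, each a matrix--vector evaluation of cost $O(n^2\log n)$ via the precomputed squares, so P3 also fits inside the preprocessing bound.

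The main obstacle is guaranteeing that $T(A)$ itself is polynomial in $n$, since otherwise $L=\log_2 T(A)$ would exceed $O(\log n)$ and the binary-expansion strategy would collapse. Proposition~\ref{p:nacht} already bounds the transient of the critical rows and columns by $n^2$, and combining this with Proposition~\ref{p:lindep}---which expresses every non-critical row or column in the periodic regime as a max combination of critical ones with coefficients that are themselves critical entries---yields a polynomial bound on the full $T(A)$, so that $L=O(\log n)$ and the claimed $O(n^3\log n)$ complexity follows.
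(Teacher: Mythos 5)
There is a genuine gap, and it sits exactly where you flagged ``the main obstacle'': your entire strategy requires $T(A)$ to be polynomial in $n$, and this is false. The transient $T(A)$ of an irreducible matrix depends on the numerical values of the entries, not only on the dimension: the paper's own first example has $T(A)=154$ for $n=9$ (already exceeding $n^2=81$), and by pushing the non-critical entries further down one can make $T(A)$ arbitrarily large while $n$ stays fixed. Propositions \ref{p:nacht} and \ref{p:lindep} do not combine to give a polynomial bound on $T(A)$; Proposition \ref{p:lindep} describes the structure of the powers \emph{after} they have become periodic, but says nothing about how long the non-critical part takes to get there. Consequently $L=\lceil\log_2 T(A)\rceil$ need not be $O(\log n)$, and the binary-expansion assembly of $A^r$ for $r\geq T(A)$ does not fit the claimed bound. (A secondary issue: even if $T(A)$ were polynomial, you would need to compute or bound it to choose $r$, which you do not address.)

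The paper's proof is designed precisely to avoid ever forming $A^r$ for $r\geq T(A)$ by powering. Only the \emph{critical} rows and columns are computed honestly, and these become periodic already for $r\geq n^2$ by Proposition \ref{p:nacht}, so one squaring chain up to some fixed $r\geq n^2$ (that is, $O(\log n)$ multiplications) suffices; the critical rows and columns for any other residue class modulo $\caa$ are then obtained by the cyclic-class permutation of Proposition \ref{p:bellman2} in $O(n^2)$ overrides. P1 is settled by checking only the critical subsystem (Proposition \ref{onlycrit}), P3 by reading the coordinate sequences off the critical subvector of $A^r\otimes x$ via the cyclic-class permutations, and for P2 the non-critical part of the periodic power is \emph{reconstructed} from the critical rows and columns using the linear dependence \eqref{e:lindep} in $O(n^3)$ time --- it is never obtained by raising $A$ to a power beyond the transient. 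To repair your argument you would need to replace ``assemble $A^r$ for $r\geq T(A)$'' by this reconstruction step, and restrict P1 and P3 to the critical subsystem computed at some $r\geq n^2$.
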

\begin{proof}
First note that we can compute both $\lambda(A)$ 
and a subeigenvector, and identify all critical
nodes in no more than $O(n^3)$ operations, 
which is done essentially by Karp
and Floyd-Warshall algorithms \cite{PS}. Further we can 
identify all cyclic classes of $\critgraph(A)$
by Balcer-Veinott condensation in $O(n^2)$ operations. 

By Proposition \ref{p:nacht} the critical rows and columns 
become periodic for 
$r\geq n^2$. To know the critical rows and columns
of a given power $r'\geq T(A)$, it suffices to compute $A^r$ 
for arbitrary $r\geq n^2$
which can be done in $O(\log n)$ 
matrix squaring ($A$, $A^2$, $A^4$, ...) and takes
$O(n^3\log n)$ time, and to apply the corresponding
permutation on cyclic classes which takes $O(n^2)$ overrides. 
By Proposition \ref{onlycrit}
we readily solve P1 by the verification 
of the critical subsystem of $A^{r'}\otimes x=A^{r'+t}\otimes x$ which takes
$O(n^2)$ operations. Using linear 
dependence \eqref{e:lindep} the remaining non-critical submatrix
of $A^{r}$, for any $r\geq T(A)$
such that $r\equiv k(\modd\;\gamma)$, can be computed in $O(n^3)$ time.
This solves P2.

As the non-critical rows of $A$ are generated by the critical
rows, the ultimate period of $\{A^r\otimes x\}$ is determined by the
critical components. For visualized matrix we know that
$A_{i\cdot}^{r+t}=A_{j\cdot}^r$ for all $i,j$ such that
$[i]\to_t[j]$. This implies $(A^{r+t}\otimes x)_i=(A^r\otimes x)_j$ for
$[i]\to_t[j]$, meaning that, to determine the period we need
only the critical subvector of $A^r\otimes x$ for any fixed
$r\geq n^2$. Indeed, for any $i\in N_\critgraph(A)$ and $r\geq n^2$ the sequence
$\{(A^{r+t}\otimes x)_i,\;t\geq 0\}$ can be represented as a sequence of
critical coordinates of $A^r\otimes x$ determined by a permutation
on $\gamma_{\mu}$ cyclic classes of the s.c.c. to which $i$
belongs. To compute the period, we take a sample of $\gamma_{\mu}$
numbers appearing consecutively in the sequence, and check all possible periods,
which takes no more than $\gamma_{\mu}^2$ operations.  
The period of $A^r\otimes x$ appears
as the l.c.m. of these periods. It remains to note that
all operations above do not require more than $O(n^3)$
time. This solves P3.
\end{proof}

\section{Properties of periodic powers}

\subsection{Core matrix}
In the sequel we always assume that $A\in\Rnn$
is irreducible.
Let $\crit(A)$
consist of $\critcomps$ s.c.c. $\crit_{\mu}$ with cyclicities
$\gamma_{\mu}$, for $\mu=1,\ldots,\critcomps$. Let
$\caa=$ l.c.m.$(\gamma_{\mu})$ and $\noncrit$
be the number of non-critical nodes. Further it will be convenient
(though artificial) to consider, together with
these components, also
``non-critical components'' $\crit_{\mu}$ for
$\mu=\critcomps+1,\ldots,\critcomps+\noncrit$,
whose
node sets $N_{\mu}$ consist of just one
non-critical node, and the set of edges
is empty.

Consider the block decomposition of $A^r$ for $r\geq 1$,
induced by the subsets $N_{\mu}$ for
$\mu=1,\ldots,\critcomps+\noncrit$.
The submatrix of $A^{r}$ extracted form the rows in
$N_{\mu}$ and columns in $N_{\nu}$ will be denoted by
$A_{\mu\nu}^{(r)}$. If $A$ is visualized and definite,
we define the corresponding {\em core matrix}
$A^{\Core}=(\alpha_{\mu\nu}),\
\mu,\nu=1,\ldots,\critcomps+\noncrit$ by
\begin{equation}
\alpha_{\mu\nu}=\max\{a_{ij}\mid i\in N_{\mu},\, j\in
N_{\nu}\}.
\end{equation}
The entries of $(A^{\Core})^*$ will be denoted by $\alpha^*_{\mu\nu}$.
Their role is
investigated in the next theorem.

\begin{theorem}
\label{t:maxentries}
Let $A\in\Rnn$ be a definite visualized matrix and $r\geq T_c(A)$. Let
$\mu,\nu=1,\ldots,\critcomps+\noncrit$ be
such that at least one of these indices is critical.
Then the maximal entry of the block
$A^{(r)}_{\mu\nu}$
is equal to $\alpha^*_{\mu\nu}$.
\end{theorem}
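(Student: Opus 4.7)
The plan is to prove the equality by sandwiching, with an easy upper bound from projecting walks to the core digraph and a more delicate lower bound that uses cyclic-class padding inside the critical component.

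For the upper bound, I will interpret $a^{(r)}_{ij}$ as the maximal weight of a walk of length $r$ from $i$ to $j$ in $\Digr(A)$ and project any such walk onto $\Digr(A^{\Core})$: if $i=i_0,i_1,\ldots,i_r=j$ lies in $N_\mu,\ldots,N_\nu$ with $i_s\in N_{\mu_s}$, then $a_{i_si_{s+1}}\leq\alpha_{\mu_s\mu_{s+1}}$, so the walk's weight is bounded by the weight of the projected walk $\mu=\mu_0,\ldots,\mu_r=\nu$ in $\Digr(A^{\Core})$. Because $A$ is visualized, every entry of $A^{\Core}$ is $\leq\bunity$, so $\lambda(A^{\Core})\leq\bunity$; hence $(A^{\Core})^*$ is a well-defined Kleene star and $\alpha^*_{\mu\nu}$ equals the maximal weight of walks from $\mu$ to $\nu$ of any length. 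This yields $a^{(r)}_{ij}\leq\alpha^*_{\mu\nu}$ for every such $i,j$.

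For the lower bound, I will exhibit an entry of $A^{(r)}_{\mu\nu}$ that attains $\alpha^*_{\mu\nu}$. Assume without loss of generality that $\mu$ is critical (the case of $\nu$ critical is symmetric, appending instead of prepending). Select a walk $\mu=\mu_0,\mu_1,\ldots,\mu_p=\nu$ in $\Digr(A^{\Core})$ of weight $\alpha^*_{\mu\nu}$, with $p\leq \critcomps+\noncrit-1$, and lift it: for each $s$ pick $(i_s,j_s)$ in $\Digr(A)$ realizing $a_{i_sj_s}=\alpha_{\mu_s\mu_{s+1}}$; for each intermediate $\mu_{s+1}$ insert a weight-$\bunity$ path from $j_s$ to $i_{s+1}$ inside $\crit_{\mu_{s+1}}$ if $\mu_{s+1}$ is critical (possible by strong connectivity and \eqref{critrule} under visualization), and note $j_s=i_{s+1}$ automatically if $\mu_{s+1}$ is non-critical. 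This yields a walk $W$ in $\Digr(A)$ from some $i_0\in N_\mu$ to some $j_{p-1}\in N_\nu$ of some length $L=O(n)$ and weight exactly $\alpha^*_{\mu\nu}$.

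Now I will adjust the length to $r$ by prepending a critical path inside $\crit_\mu$. By Proposition~\ref{ryser} applied to $\crit_\mu$, the cyclic classes partition $N_\mu$ so that, for each residue $k\in\{0,\ldots,\gamma_\mu-1\}$, there is a cyclic class whose nodes admit paths to $i_0$ of length $\equiv k\pmod{\gamma_\mu}$, and moreover for each such residue all sufficiently large lengths in this congruence class are realized by critical paths. Picking the unique cyclic class whose residue matches $r-L\pmod{\gamma_\mu}$, and using that $r\geq T_c(A)\geq n^2$ (Proposition~\ref{p:nacht}) together with $L=O(n)$, I obtain a critical path $P$ of weight $\bunity$ and length exactly $r-L$ from some $i\in N_\mu$ to $i_0$. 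Concatenating $P$ with $W$ gives a walk of length $r$ from $i$ to $j_{p-1}$ of weight $\alpha^*_{\mu\nu}$, so $a^{(r)}_{i,j_{p-1}}\geq\alpha^*_{\mu\nu}$, and equality in the theorem follows.

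The main obstacle is the residue-matching in the final paragraph: the lifted walk $W$ has a fixed length $L$, and the padding with critical cycles can only move the length within one congruence class mod $\gamma_\mu$, so covering every $r\geq T_c(A)$ forces me to vary the starting endpoint across all cyclic classes of $\crit_\mu$. The bound $T_c(A)\geq n^2$ is exactly what guarantees that the required critical path of prescribed length and residue actually exists.
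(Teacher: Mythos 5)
Your strategy is the same as the paper's: an upper bound obtained by projecting walks of $\Digr(A)$ onto $\Digr(A^{\Core})$ (the paper states this more tersely, but it is the same observation), and a lower bound obtained by lifting an optimal path of $\Digr(A^{\Core})$ to $\Digr(A)$ through maximal connecting edges and weight-$\bunity$ critical paths, then using the freedom in the critical prefix to adjust the length. The projection bound and the lifting construction are correct.

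The gap is in the final length-adjustment step. You write ``$r\geq T_c(A)\geq n^2$ (Proposition~\ref{p:nacht})'', but that proposition establishes the \emph{opposite} inequality: it shows the critical rows and columns of $A^r$ are already periodic for $r\geq n^2$, so the transient $T_c(A)$ satisfies $T_c(A)\leq n^2$ and can be much smaller. Hence the hypothesis $r\geq T_c(A)$ does not make $r-L$ large, and your prescribed-length prefix $P$ is not justified (indeed $r-L$ could be negative); as written you have only proved the statement for $r$ of order $n^2$ and larger. Two repairs are available. First, the residue bookkeeping and the largeness requirement are both unnecessary: in the nontrivial strongly connected component $\crit_\mu$ every node has a critical in-neighbour, so walking backwards along critical edges produces, for \emph{every} $m\geq 0$, a critical walk of length exactly $m$ ending at $i_0$; since all critical edges of a visualized matrix have weight $\bunity$, this gives the required weight-$\bunity$ prefix for every $r\geq L$, which is the content of the paper's remark that $\tau_1$ ``can be of arbitrary length''. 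Second, to descend from ``$r$ at least the lifted length $L$'' (or from $r\geq n^2$, if you keep your version) to all $r\geq T_c(A)$, one must use the definition of $T_c(A)$ itself: the block $A^{(r)}_{\mu\nu}$ sits in critical rows (or columns), hence is periodic in $r$ with period $\caa$ for $r\geq T_c(A)$, so its maximal entry at any such $r$ coincides with its maximal entry at some larger $r'\equiv r\,(\modd\;\caa)$ already covered by the construction. This descent — the paper's ``evidently $T'(A)$ can be replaced by $T_c(A)$'' — is absent from your argument and is replaced by an inequality pointing the wrong way.
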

\begin{proof}
The entry $\alpha^*_{\mu\nu}$ is the maximal weight over paths from $\mu$ to $\nu$, with respect to the matrix
$A^{\Core}$. We take such a
path $(\mu_1,\ldots,\mu_l)$ with maximal weight,
where $\mu_1:=\mu$ and $\mu_l=\nu$.
With this path we can associate a path
$\pi$ in $\Digr(A)$ defined by
$\pi=\tau_1\circ\sigma_1\circ
\tau_2\circ\ldots\circ\sigma_{l-1}\circ\tau_l$,
where $\tau_i$ are critical paths which
entirely belong to the components $\crit_{\mu_i}$, and
$\sigma_i$ are edges with maximal weight connecting $\crit_{\mu_i}$
to $\crit_{\mu_{i+1}}$. Such a path exists
since any two nodes in the same component
$\crit_{\mu}$ can be connected to each other by critical paths
if $\mu$ is critical, and if $\mu$ is non-critical then
$\crit_{\mu}$ consists just of one node.
The weights of $\tau_i$ are equal to $\bunity$, hence the weight of $\pi$ is equal to $\alpha^*_{\mu\nu}$.
It follows from the definition of
$\alpha_{\mu\nu}$ and $\alpha^*_{\mu\nu}$ that $\alpha_{\mu\nu}^*$
is the greatest weight
over all paths which connect nodes in
$\crit_{\mu}$ to nodes in $\crit_{\nu}$. As at
least one of the indices $\mu,\nu$
is critical, there is freedom in the choice of
the paths $\tau_1$ or $\tau_l$ which can be of arbitrary length.
Assume w.l.o.g. that $\mu$ is critical. Then
for any $r$ exceeding the length of $\sigma_1\circ
\tau_2\circ\ldots\circ\sigma_{l-1}\circ\tau_l$ which we denote by
$l_{\mu\nu}$, the block
$A^{(r)}_{\mu\nu}$ contains an entry equal to
$\alpha^*_{\mu\nu}$ which is the greatest entry of the block.
Taking the maximum $T'(A)$ of $l_{\mu\nu}$ over all ordered pairs
$(\mu,\nu)$ with $\mu$ or $\nu$ critical, we obtain the claim
for $r\geq T'(A)$. Evidently, $T'(A)$ can be replaced by
$T_c(A)$.
\end{proof}

\subsection{CSR-representation}

For a definite visualized matrix $A\in\Rnn$,
the statements of Propositions
\ref{p:lindep} and \ref{p:bellman2} can be combined in the
following. Let $C\in\R^{n\times\critnodes}$ and
$R\in\R^{\critnodes\times n}$
be matrices extracted from the first
$\critnodes$ columns
(resp. rows) of $Q(A^{\caa})$
(or equivalently $(A^{\caa})^*$),
and let $S:=A^{[C]}$, the critical
matrix of $A$ defined by \eqref{critmatx}.

\begin{theorem}[Schneider \cite{HS:PC}]
\label{schneider}
Let $A\in\Rnn$ be definite and visualized. For $r\geq T(A)$
and $r\equiv l(\modd\; \caa)$, $A^r=C\otimes S^l\otimes R$.
\end{theorem}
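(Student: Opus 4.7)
The plan is to expand the $j$-th column of $A^r$ as a max combination of the first $\critnodes$ columns of $A^r$, and then to identify the coefficients as entries of $C \otimes S^l$. Write $r = t\caa + l$ with $0\leq l<\caa$. Using $A^{r+\caa} = A^r$ for $r\geq T(A)$ (Theorem~\ref{cohen}, part~2), we may replace $r$ by $r + k\caa$ without changing either side of the claimed identity, so we can assume $t\caa \geq T(A)$ (hence also $t\caa \geq T_c(A)$). Fix $j\in\{1,\ldots,n\}$. By Proposition~\ref{p:lindep},
\[
A^r_{\cdot j} \;=\; \bigoplus_{i=1}^{\critnodes} a_{ij}^{(t\caa)}\, A^r_{\cdot i}.
\]
Because $t\caa \geq T(A)$, Theorem~\ref{cohen} gives $A^{t\caa} = Q(A^\caa)$, and by Proposition~\ref{critkls} the first $\critnodes$ rows of $Q(A^\caa)$ coincide with those of $(A^\caa)^*$. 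Hence $a_{ij}^{(t\caa)} = R_{ij}$ for every critical $i$.

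Next I would express each critical column $A^r_{\cdot i}$ in terms of $C$ and $S^l$. For $i\in\{1,\ldots,\critnodes\}$, lying in some s.c.c.\ $\critgraph_\mu$, Proposition~\ref{ryser} and the discussion of access relations yield a unique cyclic class $[i']$ in $\critgraph_\mu$ with $[i']\to_l [i]$. Proposition~\ref{p:bellman2}, part~2, in the visualized case applied at step $t\caa \geq T_c(A)$, gives $A^{t\caa}_{\cdot i'} = A^{t\caa + l}_{\cdot i} = A^r_{\cdot i}$, and by the identification above, $A^{t\caa}_{\cdot i'} = C_{\cdot i'}$. Within a single cyclic class the choice of representative is immaterial, since Proposition~\ref{p:bellman2} applied with $l=0$ shows that all columns of $A^{t\caa}$ indexed by the same cyclic class coincide. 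Now the Boolean matrix $S^l$ encodes the access relation precisely: since $S=A^{[C]}$, by Proposition~\ref{butkovic}, part~2, and the block form \eqref{crit-subm}, we have $(S^l)_{i'i} = \bunity$ iff $i'$ and $i$ belong to the same s.c.c.\ of $\critgraph(A)$ and $[i']\to_l [i]$, and $(S^l)_{i'i}=\bzero$ otherwise. Consequently
\[
A^r_{\cdot i} \;=\; \bigoplus_{i'=1}^{\critnodes} (S^l)_{i'i}\, C_{\cdot i'}.
\]

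Substituting into the previous display and reordering the maxima,
\[
A^r_{\cdot j} \;=\; \bigoplus_{i=1}^{\critnodes} R_{ij} \bigoplus_{i'=1}^{\critnodes} (S^l)_{i'i}\, C_{\cdot i'}
\;=\; \bigoplus_{i'=1}^{\critnodes} C_{\cdot i'}\, \Bigl( \bigoplus_{i=1}^{\critnodes} (S^l)_{i'i}\, R_{ij} \Bigr)
\;=\; (C\otimes S^l\otimes R)_{\cdot j},
\]
which proves the identity as $j$ was arbitrary. The main obstacle I anticipate is the bookkeeping in the middle step: one must verify that the Boolean product $S^l$, though built from the $n\times n$ matrix $A^{[C]}$, faithfully records the action of the access relations $[\cdot]\to_l[\cdot]$ on the cyclic-class partition of each s.c.c., and that the ambiguity of the representative $i'$ within its cyclic class is harmless precisely because of the visualization-induced collapse $A^{t\caa}_{\cdot i'_1} = A^{t\caa}_{\cdot i'_2}$ whenever $[i'_1]=[i'_2]$.
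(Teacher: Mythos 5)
Your proof follows essentially the same route as the paper's: reduce to the case $t\caa\geq T(A)$, use Proposition \ref{p:lindep} to express every column of $A^r$ through the critical ones with coefficients read off from $R$, use Proposition \ref{p:bellman2} to identify the critical columns of $A^{t\caa+l}$ with shifted columns of $C$, and let $S^l$ do the bookkeeping of the shift. The one step that is overstated is your characterization of $S^l$: it is \emph{not} true that $(S^l)_{i'i}=\bunity$ whenever $i'$ and $i$ lie in the same s.c.c.\ and $[i']\to_l[i]$. The entry $(S^l)_{i'i}$ equals $\bunity$ exactly when there is a path of length \emph{exactly} $l$ from $i'$ to $i$ in $\critgraph(A)$, whereas $[i']\to_l[i]$ only guarantees a path of length congruent to $l$ modulo the cyclicity. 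For instance, in the critical component with edges $1\to 2$, $2\to 1$, $2\to 3$, $3\to 4$, $4\to 1$ (cyclicity $2$, classes $\{1,3\}$ and $\{2,4\}$) one has $[1]\to_1[4]$ but $(S^1)_{14}=\bzero$. What is true --- and what the paper's proof is careful to state --- is the pair of weaker facts: $(S^l)_{i'i}=\bunity$ \emph{only if} $[i']\to_l[i]$, and for each critical $i$ there is \emph{at least one} $i'$ in the class $[\to_l i]$ with $(S^l)_{i'i}=\bunity$ (every node of a strongly connected critical component has an in-neighbour, so paths of every length ending at $i$ exist). Together with the collapse $C_{\cdot i'_1}=C_{\cdot i'_2}$ for $[i'_1]=[i'_2]$, which you correctly invoke, these two facts yield your displayed identity $A^r_{\cdot i}=\bigoplus_{i'}(S^l)_{i'i}\,C_{\cdot i'}$, and the remaining computation is fine. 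So the argument is repairable on the spot, but as written the ``iff'' is false, and the existence of at least one contributing $i'$ --- without which the right-hand side of that identity would be the zero column --- is exactly the point your justification skips.
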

\begin{proof}
By \eqref{e:specproj}
$Q(A^{\gamma})=C\otimes R$, and by Theorem \ref{cohen}
$A^{\caa t}=Q(A^{\gamma})=C\otimes R$ for $\gamma t\geq T(A)$.
Thus the claim is already proved for $r=\caa t\geq T(A)$.

Note that $C$ (resp. $R$) can be extracted from the first
$\critnodes$ columns (resp. rows) of $A^{\gamma t}$.

As $S=A^{[C]}$, it follows that 1) the $(i,j)$ entry of
$S^l$ can be $1$ only if $[i]\to_l[j]$,
2) for each pair of classes $[i]\to_l[j]$ and
each $i_1\in[i]$ there exists $j_1\in [j]$ such that
the $(i_1,j_1)$ entry of $S^l$ equals $1$.

Using these
two observations and equation \eqref{e:bellman2} applied
to $A^{(\caa t)}$ and $A^{(\caa t+l)}$ for $\caa t\geq T(A)$,
we obtain that the critical columns of $A^{(\caa t+l)}$
are given by $C\otimes S^l$ and the critical rows of
$A^{(\caa t+l)}$ are given by $S^l\otimes R$.

Combining this with any of the two equations of
\eqref{e:lindep}, we obtain that $A^{\caa t+l}=C\otimes S^l\otimes R$,
for any $\caa t\geq T(A)$.
\end{proof}

Further we observe that the dimensions of
periodic powers and the {\em $CSR$-representation}
established in Theorem \ref{schneider} can be reduced.

The rows and columns with
indices in the same cyclic class coincide
in any power $A^r$, where $r\geq T_c(A)$ and
$A$ is definite and
visualized. Hence
the blocks of $A^r$, for $\mu,\nu=1,\ldots,\critcomps+\noncrit$
and $r\geq T_c(A)$, are of the form
\begin{equation}
\label{munublocks}
A_{\mu\nu}^{(r)}=
\begin{pmatrix}
\Tilde{a}^{(r)}_{s_1t_1}E_{s_1t_1} &\dots &\Tilde{a}^{(r)}_{s_1t_m} E_{s_1t_m}\\
\vdots &\ddots &\vdots\\
\Tilde{a}^{(r)}_{s_kt_1}E_{s_kt_1} &\dots &\Tilde{a}^{(r)}_{s_kt_m} E_{s_kt_m}
\end{pmatrix},
\end{equation}
where $k$ (resp. $m$) are cyclicities of $\crit_{\mu}$
(resp. $\crit_{\nu}$), indices $s_1,\ldots,s_k$
(resp. $t_1,\ldots t_m$) correspond to properly
arranged cyclic classes of $\crit_{\mu}$ (resp. $\crit_{\nu}$),
and $E_{s_it_j}$ are matrices with appropriate dimensions
with all entries equal to $1$. We assume that $\crit_{\mu}$
has just one ``cyclic class'' if $\mu$ is non-critical.

Formula \eqref{munublocks}
defines the matrix $\Tilde{A}^{(r)}\in
\R^{(\cyclasses+\noncrit)\times(\cyclasses+\noncrit)}$,
where $\cyclasses$ is the total number of cyclic classes,
as matrix with entries
$\Tilde{a}^{(r)}_{s_it_j}$. Corresponding to \eqref{munublocks},
this matrix has blocks
\begin{equation}
\label{munublocks-tilde}
\Tilde{A}_{\mu\nu}^{(r)}=
\begin{pmatrix}
\Tilde{a}^{(r)}_{s_1t_1} &\dots &\Tilde{a}^{(r)}_{s_1t_m}\\
\vdots &\ddots &\vdots\\
\Tilde{a}^{(r)}_{s_kt_1} &\dots &\Tilde{a}^{(r)}_{s_kt_m}
\end{pmatrix}.
\end{equation}

It follows that $\Tilde{A}^{(r_1+r_2)}=\Tilde{A}^{(r_1)}\otimes
\Tilde{A}^{(r_2)}$ for all $r_1,r_2\geq T_c(A)$.
In words, the multiplication of any two powers $A^{(r_1)}$
and $A^{(r_2)}$ for $r_1,r_2\geq T_c(A)$ reduces to the
multiplication of $\Tilde{A}^{(r_1)}$ and $\Tilde{A}^{(r_2)}$.

If we take $r=\caa t+l\geq T(A)$ (instead of $T_c(A)$ above)
and denote $\Tilde{A}:=\Tilde{A}^{(\caa t+1)}$, then
due to the periodicity we obtain
\begin{equation}
\label{e:forta}
\Tilde{A}^{(\caa t+l)}=\Tilde{A}^{((\caa t+1)l)}=\Tilde{A}^l=
\Tilde{A}^{\caa t+l},
\end{equation}
so that $\Tilde{A}^{(r)}$ can be regarded as the $r$th power
of $\Tilde{A}$, for all $r\geq T(A)$.

Matrices $C$, $R$, and $S^t$ for $t\geq (n-1)^2+1$ have the same
block structure as in \eqref{munublocks}.
This shows that the behavior
of periodic powers $A^r$ is fully described by
\begin{equation}
\label{CSR-red}
\Tilde{A}^r=\Tilde{C}\Tilde{S}^l\Tilde{R},\quad r\equiv l(\modd\;\caa),
\end{equation}
where $\Tilde{S}$ is a $\cyclasses\times\cyclasses$
Boolean matrix with blocks
\begin{equation}
\label{tildes}
\Tilde{S}_{\mu\mu}=
\begin{pmatrix}
0 & 1 & 0 &\cdots & 0\\
0 & 0 & 1 & \cdots & 0\\
\vdots & \vdots & \vdots &\ddots &\vdots\\
0 & 0 & 0 & \cdots & 1\\
1 & 0 & 0 & \cdots & 0
\end{pmatrix},\quad
\Tilde{S}_{\mu\nu}=0 \ \text{for $\mu\neq\nu$,}
\end{equation}
and $\Tilde{C}\in\R^{(\cyclasses+\noncrit)\times\cyclasses}$ and
$\Tilde{R}\in\R^{\cyclasses\times(\cyclasses+\noncrit)}$
are extracted from critical
columns and rows of $\Tilde{A}^{\caa}=Q(\Tilde{A}^{\caa})$, or equivalently,
formed from the scalars in the blocks of $C$ and $R$.

\subsection{Circulant properties}

\label{ss:critcirc}

Matrix $A\in\Rnn$ is called a {\em circulant} if there exist scalars
$\alpha_1,\ldots,\alpha_n$ such that $a_{ij}=\alpha_d$ whenever $j-i=d(\modd\; n)$.
This looks like
\begin{equation}
\label{def-circulant}
A=
\begin{pmatrix}
\alpha_1 & \alpha_2 & \alpha_3 & \cdots & \alpha_n\\
\alpha_n & \alpha_1 & \alpha_2 & \cdots & \alpha_{n-1}\\
\alpha_{n-1} & \alpha_n & \alpha_1 & \cdots & \alpha_{n-2}\\
\vdots & \ddots & \ddots &  \ddots & \vdots\\
\alpha_2 & \alpha_3 &\ldots & \ldots & \alpha_1
\end{pmatrix}
\end{equation}

We also consider the following generalizations of this notion.

Matrix $A\in\Rmn$ will be called a {\em rectangular circulant}, if
$a_{ij}=a_{ps}$ whenever $p=i+t(\modd\; m)$ and $s=j+t(\modd\; n)$,
for all $i,j,t$. When $m=n$, this is an ordinary circulant
given by \eqref{def-circulant}.

Matrix $A\in\Rmn$ will be called a {\em block $k\times k$ circulant} when there exist
scalars $\alpha_1,\ldots,\alpha_k$ and a block decomposition
$A=(A_{ij}),\ i,j=1,\ldots,k$ such that $A_{ij}=\alpha_d E_{ij}$ if
$j-i=d(\modd\; k)$, where all entries of blocks $E_{ij}$ are equal to $\bunity$.

$A\in\Rmn$ is called {\em $d$-periodic} when
$a_{ij}=a_{is}$ if
$(s-j)\modd n$ is a multiple of $d$, and when $a_{ji}=a_{si}$
if $(s-j)\modd m$ is a multiple of $d$.

We give an example of $6\times 9$ rectangular circulant $A$:
\begin{equation*}
A=
\begin{pmatrix}
0 & 1 & 2 & 0 & 1 & 2 & 0 & 1 & 2\\
2 & 0 & 1 & 2 & 0 & 1 & 2 & 0 & 1\\
1 & 2 & 0 & 1 & 2 & 0 & 1 & 2 & 0\\
0 & 1 & 2 & 0 & 1 & 2 & 0 & 1 & 2\\
2 & 0 & 1 & 2 & 0 & 1 & 2 & 0 & 1\\
1 & 2 & 0 & 1 & 2 & 0 & 1 & 2 & 0
\end{pmatrix}.
\end{equation*}
This example provides evidence that a rectangular
$m\times n$ circulant consists of ordinary $d\times d$
circulant blocks where $d=$g.c.d.$(m,n)$. In particular,
it is $d$-periodic.
Also, there exist permutation matrices
$P$ and $Q$ such that $PAQ$ is a
block circulant:
\begin{equation*}
B=
\begin{pmatrix}
0 & 0 & 0 & 1 & 1 & 1 & 2 & 2 & 2\\
0 & 0 & 0 & 1 & 1 & 1 & 2 & 2 & 2\\
2 & 2 & 2 & 0 & 0 & 0 & 1 & 1 & 1\\
2 & 2 & 2 & 0 & 0 & 0 & 1 & 1 & 1\\
1 & 1 & 1 & 2 & 2 & 2 & 0 & 0 & 0\\
1 & 1 & 1 & 2 & 2 & 2 & 0 & 0 & 0
\end{pmatrix}
\end{equation*}
We formalize these observations in the following.

\begin{proposition}
\label{p:circul}
Let $A\in\Rmn$ be a rectangular circulant,
and let $d=$g.c.d.$(m,n)$.
\begin{itemize}
\item[1.] $A$ is $d$-periodic.
\item[2.] There exist permutation matrices $P$ and $Q$ such that $PAQ$
is a block $d\times d$ circulant.
\end{itemize}
\end{proposition}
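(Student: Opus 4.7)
The plan for part 1 is to exploit the defining rectangular circulant identity $a_{ij} = a_{(i+t)\modd m,\,(j+t)\modd n}$, valid for all $t$. To deduce $a_{ij} = a_{is}$ when $(s-j)\modd n$ is divisible by $d$, I will choose a shift $t$ satisfying $t \equiv 0\;(\modd\; m)$ (keeping the row index unchanged) and $t \equiv s - j\;(\modd\; n)$ (shifting the column index from $j$ to $s$). Such a $t$ exists because the cyclic subgroup of $\mathbb{Z}/n\mathbb{Z}$ generated by $m$ coincides with the subgroup generated by $d=\gcd(m,n)$, and by hypothesis $s-j$ lies in this subgroup. The second half of $d$-periodicity follows by the symmetric argument with $m$ and $n$ interchanged.

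For part 2, write $m=dm'$ and $n=dn'$. I will take $P$ and $Q$ to be the permutation matrices that reorder rows and columns of $A$ according to their residue classes modulo $d$, so that in $PAQ$ the rows whose original index is $\equiv \mu\;(\modd\; d)$ form the $\mu$-th consecutive block of size $m'$, and similarly for columns, for $\mu=0,\dots,d-1$. Part 1 guarantees that rows (resp.\ columns) in the same residue class are identical in $A$, so each of the $d^{2}$ resulting blocks of $PAQ$ is a constant matrix of size $m'\times n'$; write $\beta_{\mu\nu}$ for the common value of the $(\mu,\nu)$-block.

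What remains is to show that $\beta_{\mu\nu}$ depends only on $\nu-\mu\;(\modd\; d)$. To do this I pick representatives $(i,j)$ in the $(\mu,\nu)$-block and apply the rectangular circulant identity with an arbitrary shift $t$. Since $d\mid m$ and $d\mid n$, the residue class of $(i+t)\modd m$ is $\mu+t\;(\modd\; d)$ and that of $(j+t)\modd n$ is $\nu+t\;(\modd\; d)$. Consequently $\beta_{\mu\nu} = \beta_{\mu+t,\,\nu+t}$ for every $t$; setting $t=-\mu$ yields $\beta_{\mu\nu} = \beta_{0,\,\nu-\mu}$, establishing the block-circulant pattern with $\alpha_{k} := \beta_{0,k-1}$ playing the role of the circulant coefficients. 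The only substantive step is the subgroup-generation fact invoked in part 1; the rest is index bookkeeping, and the main thing to be careful about is that the same shift $t$ moves row and column residue classes by the same amount modulo $d$, which is precisely why $\gcd(m,n)$ governs the block size.
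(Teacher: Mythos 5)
Your proof is correct and follows essentially the same route as the paper: part 1 via shifts that are multiples of $m$ (your subgroup-generation fact is B\'ezout's identity $d=t_1m+t_2n$ in disguise, which is exactly what the paper uses), and part 2 via grouping rows and columns by residue class modulo $d$ and observing that the resulting blocks are constant. The only difference is that you spell out why the block values form a circulant pattern via the common shift of row and column residues, a detail the paper merely asserts.
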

\begin{proof}
1. There are integers $t_1$ and $t_2$
such that $d=t_1m+t_2n$. Using the definition of
rectangular circulant we obtain $a_{ij}=a_{is}$, if
$s=j+t_1m (\modd\; n)$, and hence if $s=j+d(\modd\; n)$.
Analogously for rows, we obtain that
$a_{ji}=a_{si}$, if $s=j+t_2n (\modd\; m)$, and hence if
$s=j+d(\modd\; m)$.\\
\noindent
2. As $A$ is $d$-periodic, all rows such that $i+d=j(\modd\; m)$
are equal, so that $\{1,\ldots,m\}$ can be divided in $d$ groups
each with $m/d$ indices, in such a way that $A_{i\cdot}=A_{j\cdot}$
if $i$ and $j$ belong to the same group. We can find a permutation
matrix $P$ such that $A'=PA$ will have rows
$A'_{1\cdot}=\ldots=A'_{d\cdot}=A_{1\cdot}$,
$A'_{d+1\cdot}=\ldots=A'_{2d\cdot}=A_{2\cdot}$, and so on.
Analogously we can find a permutation matrix $Q$ such that
$A''=PAQ$ will have columns
$A''_{\cdot 1}=\ldots=A''_{\cdot d}=A'_{\cdot 1}$,
$A''_{\cdot d+1}=\ldots=A''_{\cdot 2d}=A'_{\cdot 2}$, and so on.
Then $A''$ has blocks $(A''_{ij})$ for $i,j=1,\ldots,d$.
of dimension $n/d\times m/d$, where
$A''_{ij}=a_{ij}E_{ij}$, and $E_{ij}$ has all entries $1$.
As $A$ is $d$-periodic, it can be shown that its submatrix
extracted from the first $d$ rows and columns, is a circulant.
Hence $A''$ is a block circulant.
\end{proof}

When g.c.d.$(m,n)=1$, the rectangular circulant is $1$-periodic,
i.e., a constant matrix.

\begin{proposition}
\label{p:critcirc}
Let $A\in\Rnn$ be a definite visualized matrix which admits
block decomposition \eqref{munublocks},
and $r\geq T(A)$. Let $\critgraph_{\mu},\critgraph_{\nu}$ be two (possibly
equal) components of $C(A)$, and $d=g.c.d.(\gamma_{\mu},\gamma_{\nu})$.
\begin{itemize}
\item[1.] $\Tilde{A}_{\mu\nu}^{(r)}$ is a rectangular circulant
(which is a circulant if $\mu=\nu$).
\item[2.] For any critical $\mu$ and $\nu$, there is
a permutation $P$ such that
$(P^T\Tilde{A}P)^{(r)}_{\mu\nu}$ is a block
$d\times d$ circulant matrix.
\item[3.] If $r$ is a multiple of $\caa$,
then $\Tilde{A}^{(r)}_{\mu\mu}$
are circulant Kleene stars, where all off-diagonal entries are
strictly less than $1$.
\end{itemize}
\end{proposition}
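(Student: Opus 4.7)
For Part~1, all three claims rest on the shift symmetry of Proposition~\ref{p:bellman2}.2, which I would use to establish $\Tilde{a}^{(r)}_{s_a,t_b}=\Tilde{a}^{(r)}_{s_{a+1},t_{b+1}}$ for consecutive cyclic classes. Picking representatives $i\in s_a,\,i'\in s_{a+1},\,j\in t_b,\,j'\in t_{b+1}$ with $[i]\to_1[i']$ in $\critgraph_\mu$ and $[j]\to_1[j']$ in $\critgraph_\nu$, Proposition~\ref{p:bellman2}.2 gives $A^r_{i'\cdot}=A^{r+1}_{i\cdot}$ and $A^r_{\cdot j}=A^{r+1}_{\cdot j'}$. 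Evaluating the first at column $j'$ and the second at row $i$ both yield $a^{(r+1)}_{ij'}$, so $a^{(r)}_{i'j'}=a^{(r)}_{ij}$. Iterating the shift gives the rectangular circulant property, and when $\mu=\nu$ the block is $\gamma_\mu\times\gamma_\mu$, hence an ordinary circulant.

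Part~2 is then immediate from Part~1 together with Proposition~\ref{p:circul}.2, which supplies row and column permutations $P_\mu$, $P_\nu$ turning $\Tilde{A}^{(r)}_{\mu\nu}$ into a block $d\times d$ circulant. For $\mu\neq\nu$ these act on disjoint index sets of $\Tilde{A}^{(r)}$, so extending by the identity elsewhere assembles them into a single permutation $P$ for which $(P^T\Tilde{A}^{(r)}P)_{\mu\nu}$ has the required form; the case $\mu=\nu$ has $d=\gamma_\mu$ and is already covered by Part~1.

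For Part~3, let $r=t\gamma\geq T(A)$. Circulantness is Part~1 with $\mu=\nu$. The unit diagonal follows because every critical $i\in N_\mu$ lies on a critical cycle of length $\gamma_\mu\mid\gamma\mid t\gamma$; concatenating $t\gamma/\gamma_\mu$ copies yields a closed walk of weight $1$, so $a^{(t\gamma)}_{ii}=1$. Idempotence of the block comes from periodicity $\Tilde{A}^{(2t\gamma)}=\Tilde{A}^{(t\gamma)}$ combined with $\Tilde{A}^{(2t\gamma)}=(\Tilde{A}^{(t\gamma)})^2$: restricting the block sum to the term $\xi=\mu$ gives $(\Tilde{A}^{(t\gamma)}_{\mu\mu})^2\leq\Tilde{A}^{(t\gamma)}_{\mu\mu}$, and the unit diagonal supplies the reverse. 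By \eqref{kleene2} the block is a Kleene star.

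The main obstacle will be the strict off-diagonal inequality, which I would handle by contradiction. Writing the circulant $\Tilde{A}^{(t\gamma)}_{\mu\mu}$ via its entries $\alpha_0,\dots,\alpha_{\gamma_\mu-1}$, suppose $\alpha_d=1$ for some $d\not\equiv 0$. Idempotence and $\alpha_e\leq 1$ inductively give $\alpha_{kd}=1$ for every $k$, and since the cyclic subgroup $\langle d\rangle\subseteq\mathbb{Z}/\gamma_\mu\mathbb{Z}$ is closed under negation, also $\alpha_{-d}=1$. Hence both $\Tilde{a}^{(t\gamma)}_{s_a,s_b}$ and $\Tilde{a}^{(t\gamma)}_{s_b,s_a}$ equal $1$; Theorem~\ref{cohen} and Proposition~\ref{critkls} translate this to $(A^\gamma)^*_{ij}(A^\gamma)^*_{ji}=1=\lambda(A^\gamma)$ for any $i\in s_a,\,j\in s_b$, placing $i$ and $j$ on a common critical cycle of $A^\gamma$ and hence in the same s.c.c.\ of $\critgraph(A^\gamma)$. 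By Proposition~\ref{butkovic}.3, that s.c.c.\ coincides with a single cyclic class of $\critgraph_\mu(A)$, forcing $s_a=s_b$, the desired contradiction.
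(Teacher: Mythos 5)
Your proof is correct and follows essentially the same route as the paper's: part 1 from the shift identities of Proposition \ref{p:bellman2}, part 2, part 2 from Proposition \ref{p:circul}, and part 3 from the fact that the strongly connected components of $\crit(A^{\caa})$ inside $\crit_{\mu}$ are exactly the cyclic classes. Your treatment of the strict off-diagonal inequality (using the circulant and idempotent structure to upgrade $\alpha_d=1$ to $\alpha_{-d}=1$, hence to a critical cycle of $A^{\caa}$ joining two distinct cyclic classes) fills in a detail that the paper leaves implicit in its remark that the corresponding component of $\crit(\Tilde{A}^{\caa})$ consists of loops.
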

\begin{proof}
1.: Using Eqn. \eqref{e:bellman3} we see that for all $(i,j)$ and $(k,l)$ such that
$k=i+t(\modd \gamma_{\mu})$ and
$l=j+t(\modd \gamma_{\nu})$,
$$\Tilde{a}_{s_kt_l}^{(r)}=\Tilde{a}_{s_it_l}^{(r+t)}=
\Tilde{a}_{s_it_j}^{(r)}.$$

\msn
2.: If $\mu=\nu$ then $P=I$, and if $\mu\neq\nu$ then $P$
is any permutation matrix such that its ``subpermutations''
for $N_{\mu}$ and $N_{\nu}$ are given by $P$ and $Q$ of
Proposition \ref{p:circul}.

\msn
3.: Part 1 shows that $\Tilde{A}^{(r)}_{\mu\mu}$ are
circulants for any $r\geq T(A)$ and critical $\mu$.
If $r$ is a multiple of $\caa$, then $\Tilde{A}_{\mu\mu}^{(r)}$
are submatrices of $\Tilde{A}^{\gamma}=Q(\Tilde{A}^{\gamma})$
and hence of $(\Tilde{A}^{\gamma})^*$. This implies,
using \eqref{kleene2}, that they are Kleene stars.
As the $\mu$th component of $\crit(\Tilde{A})$ is just
a cycle of length $\caa_{\mu}$, the corresponding
component of $\crit(\tilde{A}^{\caa})$ consists of
$\caa_{\mu}$ loops, showing that the off-diagonal
entries of $\Tilde{A}^{(r)}_{\mu\mu}$ are strictly less than $1$.
\end{proof}


\section{Attraction cones}

\subsection{A system for attraction cone}

Let $A\in\Rnn$ and $\lambda(A)=\bunity$.
Recall that {\em attraction cone} $\operatorname{Attr}(A,t)$, where
$t\geq 1$ is an integer,
is the set which consists of
all vectors $x$, for which there exists an integer $r$ such that
$A^r\otimes x=A^{r+t}\otimes x$, and hence
this is also true for all integers greater than or equal to $r$.
We have shown above, see Subsect. 4.3, that $\operatorname{Attr}(A,t)$ is solution
set to the critical subsystem of the system 
$A^{r+t}\otimes x=A^r\otimes x$, for any $r\geq T_c(A)$.
Next we show how
the specific circulant structure of $A^r$ at $r\geq T_c(A)$
can be exploited, to derive a more concise system of equations
for the attraction cone $\Attr(A,1)$.
Due to Theorem \ref{t:maxentries}
the core matrix
$A^{\Core}=\{\alpha_{\mu\nu}\mid\mu,\nu=1,\ldots,\critcomps\}$,
and its Kleene star
$(A^{\Core})^*=\{\alpha_{\mu\nu}^*\mid\mu,\nu=1,\ldots,\critcomps\}$
will be of special importance.
We introduce
the notation
\begin{equation}
\label{stnot}
\begin{split}
M_{\nu}^{(r)} (i)&=\{j\in N_{\nu}\mid a_{ij}^{(r)}=
\alpha_{\mu\nu}^*\},\ i\in N_{\mu},\ \forall\nu: \crit_{\nu}\neq \crit_{\mu},\\
K^{(r)}(i)&=\{t>\critnodes\mid a_{it}^{(r)}=\alpha_{\mu\nu(t)}^*\},\
i\in N_{\mu},
\end{split}
\end{equation}
where $\crit_{\mu}$ and $\crit_{\nu}$ are s.c.c. of
$\crit(A)$, $N_{\mu}$ and $N_{\nu}$
are their node sets, and $\nu(t)$ in the second
definition denotes the index of the non-critical component which consists of
the node $t$. The sets $M_{\nu}^{(r)}(i)$ are non-empty
for any $r\geq T_c(A)$, due to Theorem \ref{t:maxentries}.

The results of Subsect. \ref{ss:critcirc} lead to the following
properties of $M_{\nu}^{(r)}(i)$ and $K^{(r)}(i)$.

\begin{proposition}
\label{stprops}
Let $r\geq T_c(A)$ and $\mu,\nu\in\{1,\ldots,\critcomps\}$.
\begin{itemize}
\item[1.] If $[i]\to_t[j]$ and $i,j\in N_{\mu}$ then
$M_{\nu}^{(r+t)}(i)=M_{\nu}^{(r)} (j)$
and $K^{(r+t)}(i)=K^{(r)}(j)$.
\item[2.] Each $M_{\nu}^{(r)}(i)$ is the union of
some cyclic classes of $\crit_{\nu}$.
\item[3.] Let $i\in N_{\mu}$ and
$d=g.c.d.(\gamma_{\mu},\gamma_{\nu})$.
Then, if $[p]\subseteq M_{\nu}^{(r)}(i)$
and $[p]\to_d [s]$ then $[s]\subseteq M_{\nu}^{(r)}(i)$.
\item[4.] Let $i,j\in N_{\mu}$ and $p,s\in N_{\nu}$.
Let $[i]\to_t[j]$ and $[p]\to_t[s]$. Then
$[p]\subseteq M_{\nu}^{(r)}(i)$ if and only if
$[s]\subseteq M_{\nu}^{(r)}(j)$.
\end{itemize}
\end{proposition}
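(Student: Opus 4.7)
The plan is to deduce all four parts from two ingredients already established for a definite visualized $A$ at times $r\geq T_c(A)$: the row/column identities \eqref{e:bellman3} of Proposition~\ref{p:bellman2}, and the rectangular circulant structure of the reduced blocks $\Tilde{A}^{(r)}_{\mu\nu}$ given by Proposition~\ref{p:critcirc}(1). Parts~1, 2 and~4 will be essentially bookkeeping on top of \eqref{e:bellman3}; Part~3 is where the circulant structure is genuinely needed.

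For Part~1, I would apply the row identity $A_{i\cdot}^{r+t}=A_{j\cdot}^{r}$, which is \eqref{e:bellman3} for $[i]\to_t[j]$. Entry-by-entry this reads $a^{(r+t)}_{ik}=a^{(r)}_{jk}$ for every column index~$k$. Since the threshold value $\alpha^*_{\mu\nu}$ appearing in the definition of $M_\nu^{(r)}(\cdot)$ depends only on the component indices (and similarly $\alpha^*_{\mu\nu(k)}$ for $K^{(r)}(\cdot)$ depends only on $k$, not on $r$ or the starting node), the index sets where the equality with the threshold holds must coincide, giving both $M_\nu^{(r+t)}(i)=M_\nu^{(r)}(j)$ and $K^{(r+t)}(i)=K^{(r)}(j)$. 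Part~2 is the $t=0$ specialization of the column identity: any two $j,j'\in N_\nu$ with $[j]=[j']$ satisfy $[j]\to_0[j']$, so $A^r_{\cdot j}=A^r_{\cdot j'}$ and $a^{(r)}_{ij}=a^{(r)}_{ij'}$, so either both nodes lie in $M^{(r)}_\nu(i)$ or neither does. Part~4 combines Part~1 with the column identity applied to $[p]\to_t[s]$: from $a^{(r)}_{ip}=a^{(r+t)}_{is}$ one gets $[p]\subseteq M^{(r)}_\nu(i)\Leftrightarrow[s]\subseteq M^{(r+t)}_\nu(i)$, and Part~1 identifies the right-hand set with $M^{(r)}_\nu(j)$.

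The main obstacle is Part~3, which does not reduce to a single row/column identity and requires the circulant structure itself. Here I would invoke Proposition~\ref{p:critcirc}(1) to view $\Tilde{A}^{(r)}_{\mu\nu}$ as a $\gamma_\mu\times\gamma_\nu$ rectangular circulant, and then Proposition~\ref{p:circul}(1) to conclude that it is $d$-periodic with $d=\gcd(\gamma_\mu,\gamma_\nu)$. Since $[p]\to_d[s]$ translates to a shift of the cyclic-class column index by $d\pmod{\gamma_\nu}$ (a multiple of $d$), the $d$-periodicity forces the corresponding reduced entries to coincide. Lifting back to $A^{(r)}$ through the block description \eqref{munublocks}, this gives $a^{(r)}_{ij}=a^{(r)}_{ij'}$ for all $j\in[p]$ and $j'\in[s]$, so maximality of the entry on $[p]$ is inherited by $[s]$, establishing $[s]\subseteq M^{(r)}_\nu(i)$. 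The one technical point deserving care is verifying that the cyclic-class ordering used to define the access relation $\to_d$ matches the column ordering inherited by the rectangular circulant $\Tilde{A}^{(r)}_{\mu\nu}$ from the block decomposition \eqref{munublocks}; once the two orderings are aligned, the $d$-periodicity transfers without further effort.
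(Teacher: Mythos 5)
Your proof is correct and follows exactly the route the paper intends: the paper states Proposition~\ref{stprops} without proof, attributing it to ``the results of Subsect.~5.3,'' and your derivation of parts 1, 2 and 4 from \eqref{e:bellman3} and of part 3 from the $d$-periodicity of the rectangular circulant $\Tilde{A}^{(r)}_{\mu\nu}$ is precisely that. The only point worth flagging is that Proposition~\ref{p:critcirc} is stated for $r\geq T(A)$ while here $r\geq T_c(A)$; this is harmless because its part 1 for critical blocks rests only on \eqref{e:bellman3}, which already holds for $r\geq T_c(A)$.
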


Next we establish the
cancellation rules which will enable us to write out
a concise system of equations for the attraction cone $\Attr(A,1)$.

If $a<c$, then
\begin{equation}
\label{simplcanc}
\{x\colon ax\oplus b=cx\oplus d\}\ =\ \{x\colon b=cx\oplus d\}.
\end{equation}

Now consider a system
of equations over max algebra:
\begin{equation}
\label{e:sysgen1}
\bigoplus_{i=1}^n a_{1i} x_i\oplus c_1=\bigoplus_{i=1}^n a_{2i} x_i \oplus c_2=\ldots=
\bigoplus_{i=1}^n a_{ni} x_i\oplus c_n.
\end{equation}
Suppose that $\alpha_1,\ldots,\alpha_n\in\R$ are such that
$a_{li}\leq \alpha_i$ for all $l$ and $i$, and
$S_l=\{i\mid a_{li}=\alpha_i\}$ for $l=1,\ldots,n$.
Let $S_l$ be such that
$\bigcup_{l=1}^n S_l=\{1,\ldots,n\}$. Repeatedly applying
the elementary cancellation law described above, we obtain that
\eqref{e:sysgen1} is equivalent to
\begin{equation}
\label{e:syscanc1}
\bigoplus_{i\in S_1}\alpha_i x_i\oplus c_1=
\bigoplus_{i\in S_2}\alpha_i x_i\oplus c_2=\ldots=
\bigoplus_{i\in S_n}\alpha_i x_i\oplus c_n.
\end{equation}
We will refer to the equivalence between \eqref{e:sysgen1}
and \eqref{e:syscanc1},
which we acknowledge to \cite{Dok:08},
as to {\em chain cancellation}.

Using notation \eqref{stnot} and
Proposition \ref{stprops} we formulate the
main result of the paper.

\begin{theorem}
\label{mainres}
Let $A\in\Rnn$ be a visualized matrix
and $r\geq T(A)$ be a multiple of $\caa$. Then the system
$A^r\otimes x=A^{r+1}\otimes x$ is equivalent to
\begin{equation}
\label{mainsys}
\begin{split}
&\bigoplus_{k\in[i]}
x_k\oplus
\bigoplus_{\crit_{\nu}\neq \crit_{\mu}}\alpha_{\mu\nu}^*
\left(\bigoplus_{k\in M_{\nu}^{(r)}(i)} x_k\right)
\oplus\bigoplus_{t\in K^{(r)}(i)} \alpha_{\mu\nu(t)}^* x_t=\\
&=\bigoplus_{k\in[j]}x_k\oplus\bigoplus_{\crit_{\nu}\neq
\crit_{\mu}}\alpha_{\mu\nu}^*
\left(\bigoplus_{k\in M_{\nu}^{(r)}(j)} x_k\right)
\oplus\bigoplus_{t\in K^{(r)}(j)} \alpha_{\mu\nu(t)}^* x_t,
\end{split}
\end{equation}
where $\crit_{\mu}$ is the component of
$\crit(A)$ which contains both $[i]$ and $[j]$, and $[i]$ and $[j]$ range
over all pairs of cyclic classes such that $[i]\to_1[j]$.
\end{theorem}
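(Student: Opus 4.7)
The plan is to simplify $A^r \otimes x = A^{r+1} \otimes x$ in three stages: reduce to the critical subsystem, rewrite each critical equation via cyclic classes, and then collapse chains of equalities by chain cancellation. First I would invoke Proposition~\ref{onlycrit} to restrict attention to the critical subsystem, i.e., to $(A^{r+1} \otimes x)_i = (A^r \otimes x)_i$ for $i \in \{1,\ldots,\critnodes\}$. Since $A$ is visualized and $r \geq T(A) \geq T_c(A)$, Proposition~\ref{p:bellman2} part 2 gives $A_{i\cdot}^{r+1} = A_{j\cdot}^r$ whenever $[i] \to_1 [j]$, so each critical equation rewrites as $(A^r \otimes x)_j = (A^r \otimes x)_i$ with $[i]\to_1[j]$. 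The same proposition with $l=0$ shows that rows of $A^r$ indexed in the same cyclic class coincide; consequently, for each critical component $\crit_\mu$ of cyclicity $\gamma_\mu$, picking representatives $i_1,\dots,i_{\gamma_\mu}$ with $[i_k]\to_1[i_{k+1}]$ collapses the critical subsystem on $N_\mu$ to the single cyclic chain of equalities
\[
(A^r \otimes x)_{i_1} = (A^r \otimes x)_{i_2} = \cdots = (A^r \otimes x)_{i_{\gamma_\mu}}.
\]

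Next I would determine, for every column index $l$, the maximum of $a_{i_k l}^{(r)}$ as $k$ varies and the set on which it is attained. For $l \in N_\mu$ the bound is $\alpha_{\mu\mu}^* = \bunity$, achieved precisely when $l \in [i_k]$: this uses Proposition~\ref{p:critcirc} part 3, which applies since $r$ is a multiple of $\caa$, giving that $\Tilde{A}_{\mu\mu}^{(r)}$ is a circulant Kleene star with off-diagonal entries strictly less than $\bunity$. For $l \in N_\nu$ with $\nu \neq \mu$ critical the bound is $\alpha_{\mu\nu}^*$ by Theorem~\ref{t:maxentries}, attained exactly on $M_\nu^{(r)}(i_k)$; for non-critical $l$ the bound is $\alpha_{\mu\nu(l)}^*$, attained exactly when $l \in K^{(r)}(i_k)$. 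The coverage condition for chain cancellation---that every $l$ attains its bound at some $i_k$---follows from Theorem~\ref{t:maxentries} combined with the row-equality from Proposition~\ref{p:bellman2} part 2: if the block-maximum is attained at some $i^* \in N_\mu$, then it is attained at the chain representative $i_k$ with $[i_k] = [i^*]$.

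Applying chain cancellation to the displayed chain then replaces each coefficient $a_{i_k l}^{(r)}$ by its upper bound on the set where the bound is attained, so $(A^r \otimes x)_{i_k}$ becomes precisely the expression appearing in \eqref{mainsys}. Writing the chain as a list of pairwise equalities between consecutive pairs $([i_k], [i_{k+1}]) = ([i],[j])$ with $[i] \to_1 [j]$ produces the stated system. I expect the main technical obstacle to be verifying the coverage condition cleanly---identifying which columns attain the relevant block-maxima at which rows in the chain---which is precisely where Proposition~\ref{stprops} and the circulant structure of Proposition~\ref{p:critcirc} are indispensable.
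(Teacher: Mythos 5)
Your proposal follows essentially the same route as the paper's proof: reduce to the critical subsystem via Proposition~\ref{onlycrit}, use Proposition~\ref{p:bellman2} to rewrite the system as the chain \eqref{pre-form} over pairs $[i]\to_1[j]$, and then apply chain cancellation, with Proposition~\ref{p:critcirc} part 3 handling the diagonal blocks, Theorem~\ref{t:maxentries} supplying the maximal entries $\alpha_{\mu\nu}^*$, and the definitions of $M_{\nu}^{(r)}(i)$ and $K^{(r)}(i)$ locating where they occur. The one point to tighten is your coverage argument for blocks with both $\mu$ and $\nu$ critical: within-class row equality only transports the single maximal entry guaranteed by Theorem~\ref{t:maxentries} to a chain representative in \emph{one} column, and you need the circulant shift property (Proposition~\ref{p:critcirc} parts 1--2, or Proposition~\ref{stprops} part 4) to conclude that \emph{every} column of the block contains such an entry --- which is exactly what the paper invokes and what you correctly anticipate in your closing remark.
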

\begin{proof} By Proposition \ref{onlycrit}
$A^r\otimes x=A^{r+1}\otimes x$ is equivalent to its critical subsystem.
Consider a critical component of $A^r\otimes x=A^{r+1}\otimes x$:
\begin{equation}
\label{e:critcomp}
\bigoplus_k a_{ik}^{(r)} x_k=
\bigoplus_k a_{ik}^{(r+1)} x_k,\ i=1,\ldots,\critnodes.
\end{equation}
Consider $j$ such that $[i]\to_1 [j]$.
Then by Proposition \ref{p:bellman2},
$a_{ik}^{(r+1)}=a_{jk}^{(r)}$,
hence the critical subsystem of
$A^r\otimes x=A^{r+1}\otimes x$ is as follows:
\begin{equation}
\label{pre-form}
\bigoplus_k a_{ik}^{(r)} x_k=\bigoplus_k a_{jk}^{(r)} x_k,\ \forall i,j:\ [i]\to_1[j].
\end{equation}
The number of non-identical equalities in \eqref{e:critcomp}
and \eqref{pre-form} is equal to the total number of cyclic classes.

Proposition \ref{p:critcirc}, part 3, implies that
all principal submatrices of $A^r$ extracted from critical
components have a circulant block structure.
In this structure,
all entries of the diagonal blocks are equal to $\bunity$,
and the entries of all off-diagonal blocks are strictly
less than $\bunity$. Hence we can apply the chain
cancellation (equivalence
between \eqref{e:sysgen1} and \eqref{e:syscanc1})
and obtain the first terms on both sides of
\eqref{mainsys}. By Theorem \ref{t:maxentries} each block $A_{\mu\nu}$ contains an entry equal to
$\alpha_{\mu\nu}^*$. For a non-critical $\nu(t)$, this readily implies that the corresponding ``subcolumn''
$A_{\mu\nu(t)}$ contains an entry $\alpha_{\mu\nu(t)}^*$. Applying
the chain cancellation we obtain the last terms
on both sides of \eqref{mainsys}. Due to the
block circulant structure of $A_{\mu\nu}$ with both
$\mu$ and $\nu$ critical, see Proposition \ref{p:critcirc} or
Proposition \ref{stprops},
we see that each column of such block also contains
an entry equal to $\alpha_{\mu\nu}^*$. Applying the
chain cancellation we obtain the remaining terms in \eqref{mainsys}.
\end{proof}

As $\Attr(A,t)=\Attr(A^t,1)$,
system \eqref{mainsys} also describes more general attraction cones,
it only amounts to substitute $\crit(A^t)$ for $\crit(A)$ and the entries of
$((A^t)^{\Core})^*$ for $\alpha_{\mu\nu}^*$
(the dimension of this matrix will be different
in general, see Proposition \ref{butkovic} part 3).

We note that the system for $\Attr(A,1)$ naturally
breaks into several chains of equations corresponding to
the s.c.c. of $\crit(A)$. If we start with \eqref{pre-form},
it can be equivalently written as $R\otimes x=H\otimes y$ where
$R$ is the factor in $CSR$-representation, and
$H\in\R^{\critnodes\times\critcomps}$
is a Boolean matrix with entries
\begin{equation}
\label{h-def}
h_{i\mu}=
\begin{cases}
1, &\text{if $i\in N_{\mu}$},\\
0, &\text{otherwise}.
\end{cases}
\end{equation}
We can apply cancellation as described in the proof
of Theorem \ref{mainres}, to get rid of redundant terms
on the left-hand side of the two-sided system.

If $\crit(A)$ is strongly connected then $H$ is a vector
of all ones, and the two-sided system $R\otimes x=H\otimes y$ becomes
essentially one-sided. We treat this case in the next
subsections.

\subsection{Extremals of attraction cones}
System \eqref{mainsys} in general consists of several
chains of equations corresponding to s.c.c. of $\crit(A)$.
Each chain is of the form
\begin{equation}
\label{e:chain}
\bigoplus_{i\in T_1} a_ix_i=\bigoplus_{i\in T_2} a_ix_i=\ldots=
\bigoplus_{i\in T_m} a_ix_i,
\end{equation}
where $T_1\cup\ldots\cup T_m=\{1,\ldots,n\}$ and $a_i$
come from the entries of $(A^{\Core})^*$.

Here we consider only the case of strongly connected $\crit(A)$,
i.e., only one chain. By scaling $y_i=a_ix_i$ we obtain
\begin{equation}
\label{e:chain1}
\bigoplus_{i\in T_1} y_i=\bigoplus_{i\in T_2} y_i=\ldots=
\bigoplus_{i\in T_m} y_i,
\end{equation}

Note that when the critical graph
is not strongly connected,
we have several chains of equations
and the coefficients of \eqref{e:chain} in general
cannot be scaled to get \eqref{e:chain1}
for each chain at the same time.

By $e^i$ we denote the vector which has the
$i$th coordinate and all the rest
equal to $0$. Vector $y\in\R^n$ will
be called {\em scaled} if $\bigoplus_{i=1}^n y_i=1$,
and set $S$ is called scaled if it consists of scaled vectors.
We say that $V\subseteq\R^n$ is {\em generated} by $S\subseteq\R^n$
(also, $S$ is a generating set of $V$)
if $S\subseteq V$ and
for any $x\in V$ there exist $y^1,\ldots,y^l\in S$
and $\alpha_1,\ldots,\alpha_l\in\R$
such that $x=\bigoplus_{i=1}^l \alpha_i y^i$.

We investigate extremal solutions of
\eqref{e:chain1}: a solution $x$
is called {\em extremal} if $x=y\oplus z$ for two other solutions
$y,z$ implies that $x=y$ or $x=z$ \cite{BSS-07}. The
following can be deduced from the results of \cite{BH-84,BSS-07}.

\begin{proposition}
The solution set of any finite system of max-linear equations
has a finite generating set. In particular, it is
generated by extremal solutions, and any set of
scaled generators for the solution set contains all
scaled extremal solutions.
\end{proposition}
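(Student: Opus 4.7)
The plan is to establish finite generation of the solution set first, using the elimination method as in \cite{BH-84}, and then derive the statements about extremals from general facts about finitely generated max cones, as developed in \cite{BSS-07}.

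For finite generation, I would proceed by induction on the number of equations. A single equation $\bigoplus_i a_i x_i = \bigoplus_j b_j x_j$ has a solution set that decomposes into finitely many pieces indexed by pairs $(i_0, j_0)$ of indices attaining the maxima on the two sides; each piece is cut out by max-linear inequalities together with the tropical equality $a_{i_0} x_{i_0} = b_{j_0} x_{j_0}$, and is therefore a finitely generated max cone. Intersecting with the solution set of the next equation preserves finite generation, which is precisely the content of Butkovi\v{c}--Hegedus elimination \cite{BH-84}. Iterating yields a finite set $\{g^1,\ldots,g^N\}$ generating the solution set $V$, which after normalization we may take to be scaled.

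Once $V$ is finitely generated, the extremal statements follow from standard arguments in \cite{BSS-07}. One shows that $e \in V$ is extremal if and only if whenever $e = \bigoplus_i \alpha_i y^i$ with $y^i \in V$ and $\alpha_i \in \R$, there exists $j$ with $e = \alpha_j y^j$. Starting from the scaled generating set $\{g^1,\ldots,g^N\}$ and iteratively discarding any $g^i$ that is expressible as a max combination of the others produces a minimal scaled generating set consisting entirely of extremals, so every element of $V$ is a max combination of scaled extremals. For the final claim, let $S$ be any scaled generating set and let $e$ be a scaled extremal. Writing $e = \bigoplus_i \alpha_i y^i$ with $y^i \in S$, extremality forces $e = \alpha_j y^j$ for some $j$; since both $e$ and $y^j$ have maximum entry equal to $\bunity$, this yields $\alpha_j = \bunity$ and $e = y^j \in S$.

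The main obstacle is a clean proof of the first step, finite generation of solution sets of two-sided max-linear systems, which requires some care in handling the elimination argument; the extremal content is essentially bookkeeping once finite generation is in hand.
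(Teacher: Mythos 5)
Your proposal is correct and matches the paper's approach: the paper gives no argument for this proposition beyond the remark that it ``can be deduced from the results of \cite{BH-84,BSS-07}'', and your sketch unpacks exactly those two ingredients --- the Butkovi\v{c}--Heged\"us elimination method for finite generation of solution sets of two-sided max-linear systems, and the basis/extremality theory of finitely generated max cones from \cite{BSS-07} for the remaining claims. The details you fill in (the reduction to finitely many pieces, the discarding argument showing a non-redundant scaled generator is extremal, and the scaling argument forcing $\alpha_j=\bunity$) are all sound.
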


In the next proposition we show that
extremal solutions of \eqref{e:chain1} can
have only $0$ and $1$ components.

\begin{proposition}
\label{extr01}
Let $y$ be a scaled solution of \eqref{e:chain1} and
let $0<y_i<1$ for some $i$.
Then $y$ is not an extremal.
\end{proposition}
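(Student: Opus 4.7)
The plan is to exhibit an explicit decomposition $y = u \oplus v$ of $y$ into two solutions of \eqref{e:chain1}, each distinct from $y$. Since $y$ is scaled, the common value of the max-sums in \eqref{e:chain1} equals $1$, and by hypothesis there is a coordinate $\alpha := y_i$ with $0 < \alpha < 1$. I would split $y$ at the threshold $\alpha$: set $u_j := y_j$ whenever $y_j > \alpha$ and $u_j := 0$ otherwise, and set $v_j := \alpha$ whenever $y_j > \alpha$ and $v_j := y_j$ otherwise. Direct pointwise comparison yields $u \oplus v = y$.

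The content of the argument is the verification that both $u$ and $v$ satisfy \eqref{e:chain1}. The key observation is that since the common value of \eqref{e:chain1} on $y$ equals $1 > \alpha$, each $T_k$ must contain at least one index $j$ with $y_j = 1$. This index contributes the value $1$ to the max of $u$ on $T_k$, and since no coordinate of $u$ exceeds $1$, the max of $u$ on every $T_k$ equals $1$. For $v$, the same index contributes the value $\alpha$ (its cap), while all remaining coordinates of $v$ are at most $\alpha$ by construction, so the max of $v$ on every $T_k$ equals $\alpha$. Thus both $u$ and $v$ solve \eqref{e:chain1}, the latter with common value $\alpha$ rather than $1$.

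Finally, one checks that neither piece equals $y$: the coordinate $i$ gives $u_i = 0 \neq \alpha = y_i$ since $\alpha > 0$, and for any $j$ with $y_j = 1$ one has $v_j = \alpha < 1 = y_j$. This exhibits $y$ as a nontrivial max-combination of two other solutions, so $y$ is not extremal. There is no real obstacle here; the only point to be careful about is choosing the threshold to be exactly $\alpha = y_i$ rather than some arbitrary value in $(0,1)$, which is what forces the coordinate $i$ to behave nontrivially in $u$ and guarantees $u \neq y$.
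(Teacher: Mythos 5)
Your proof is correct. It rests on the same key observation as the paper's: since $y$ is scaled and $T_1\cup\ldots\cup T_m=\{1,\ldots,n\}$, the common value of the chain \eqref{e:chain1} on $y$ is $1$, so every $T_l$ contains a coordinate where $y$ equals $1$, and the intermediate coordinates can therefore be lowered or zeroed without disturbing any equation. The decompositions differ, though. You split $y$ at the threshold $\alpha=y_i$ into two pieces $u$ and $v$ (with $v$ a solution of common value $\alpha$ rather than $1$), which matches the two-term definition of extremality verbatim and needs no further packaging. The paper instead sets $K^<=\{i\mid 0<y_i<1\}$, $K^1=\{i\mid y_i=1\}$ and writes $y=v^0\oplus\bigoplus_{k\in K^<}y_k\,v^1(k)$, a multi-term max combination of $0$--$1$ indicator solutions; this implicitly requires regrouping into two terms (using that the solution set is a max cone), but it has the advantage of exhibiting exactly the $0$--$1$ solutions, indexed by coverings, that drive the subsequent characterization of extremals in Proposition~\ref{mainres-chains}. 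Your closing remark is slightly overcautious: any threshold in $[\alpha,1)$ would work, since what is needed is only that coordinate $i$ be zeroed in $u$ and that the coordinates equal to $1$ be capped in $v$.
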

\begin{proof}
Let $K^<:=\{i\mid 0<y_i<1\}$ and $K^1:=\{i\mid y_i=1\}$, and
define vectors
$v^0$ and $v^1(k)$ for each $k\in K^<$ by
\begin{equation}
v^0_i=
\begin{cases}
1, & \text{if $i\in K^1$}\\
0, & \text{otherwise}
\end{cases},
\quad
v^1_i(k)=
\begin{cases}
1, & \text{if $i\in K^1\cup\{k\}$}\\
0, & \text{otherwise}
\end{cases}.
\end{equation}
Observe that both $v^0$ and $v^1(k)$ for any
$k\in K^<$, are solutions to
\eqref{e:chain1}, different from $y$. We have:
\begin{equation}
y=v^0\oplus\bigoplus_{k\in K^<} y_k\cdot v^1(k),
\end{equation}
hence $y$ is not an extremal.
\end{proof}

Let $T=(t_{ij})$ be the $m\times n$ $0-1$ matrix defined by
\begin{equation}
\label{def-tij}
t_{ij}=
\begin{cases}
1, & \text{if $j\in T_i$,}\\
0, & \text{otherwise},
\end{cases}
\end{equation}
where $T_i$ are from \eqref{e:chain1}.

A subset $K\subseteq\{1,\ldots,n\}$ is called a {\em covering} of $T$ if
each $T_i$ contains an index from $K$. The following is immediate.
\begin{proposition}
\label{p:simple}
A scaled vector $y$ is a solution of \eqref{e:chain1} if and only if
$K^1:=\{i\mid y_i=1\}$ is a covering of $T$.
\end{proposition}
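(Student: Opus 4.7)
The plan is to verify the two implications directly, using just two elementary observations about a scaled vector: (a) every coordinate satisfies $0\leq y_i\leq 1$ (by nonnegativity and the fact that $\bigoplus_i y_i=1$ means the maximum coordinate equals $1$), and (b) since $T_1\cup\ldots\cup T_m=\{1,\ldots,n\}$, every coordinate index lies in at least one $T_j$.

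For the ``only if'' direction, suppose $y$ is a scaled solution of \eqref{e:chain1} and denote the common value of the $m$ maxima by $c$. I would pick an index $k$ with $y_k=1$ (which exists since $y$ is scaled), choose $j_0$ with $k\in T_{j_0}$, and deduce $c=\bigoplus_{i\in T_{j_0}} y_i\geq y_k=1$. Combined with $y_i\leq 1$ for all $i$, this forces $c=1$. Hence, for every $j$ the equality $\bigoplus_{i\in T_j} y_i=1$ is attained by some $i\in T_j$ with $y_i=1$, i.e., $T_j\cap K^1\neq\emptyset$, so $K^1$ covers $T$.

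For the ``if'' direction, suppose $K^1$ is a covering. Then for each $j$ there is some $i_j\in T_j\cap K^1$, giving $y_{i_j}=1$, so $\bigoplus_{i\in T_j} y_i\geq 1$. The upper bound $y_i\leq 1$ then yields $\bigoplus_{i\in T_j} y_i=1$ for every $j$, which is exactly \eqref{e:chain1}.

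There is no real obstacle here; the proposition is a direct translation between the defining condition of \eqref{e:chain1} and the covering condition on $T$, the only point worth pausing on being to confirm that all coordinates of a scaled vector are bounded above by $1$ so that each max cannot exceed $1$.
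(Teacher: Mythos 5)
Your argument is correct and is precisely the ``immediate'' verification the paper has in mind (the paper states Proposition~\ref{p:simple} without proof). You rightly handle the one point that needs care: a solution of \eqref{e:chain1} only requires the $m$ maxima to be \emph{equal}, and your observation that the scaled condition together with $T_1\cup\ldots\cup T_m=\{1,\ldots,n\}$ forces the common value to be $1$ closes that gap cleanly in both directions.
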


A covering $K$ is called {\em minimal} if it does not contain any
proper subset which is also a covering.

A covering $K$ will be called {\em nearly minimal}
if it contains no more than one
proper subcovering $K'$. Observe that then
the complement $K\backslash K'$ consists of just
one index. Hence, a covering is nearly minimal if and only if
there may exist
no more than one $i\in K$ such
that $K\backslash\{i\}$ is a covering.

\begin{proposition}
\label{mainres-chains}
Extremal solutions of \eqref{e:chain1} are precisely $v^K=\bigoplus_{i\in K} e_i$,
where $K$ is a nearly minimal covering of $T$.
\end{proposition}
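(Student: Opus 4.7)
The plan is to characterize the extremal solutions among the $0$--$1$ vectors. By Proposition~\ref{extr01}, every scaled extremal solution of \eqref{e:chain1} has entries in $\{0,1\}$, and by Proposition~\ref{p:simple} such a scaled $0$--$1$ vector solves \eqref{e:chain1} precisely when its support is a covering of $T$. So it suffices to determine which coverings $K$ make $v^K$ extremal.

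For the direction ``$K$ nearly minimal $\Rightarrow v^K$ extremal,'' I would suppose $v^K = y\oplus z$ for solutions $y,z$ of \eqref{e:chain1} and try to conclude $y = v^K$ or $z = v^K$. Since $y,z\leq v^K$ coordinatewise, their supports are contained in $K$, and for every $i\in K$ one has $\max(y_i,z_i)=1$. Set $K_y=\{i\in K : y_i=1\}$ and $K_z=\{i\in K : z_i=1\}$, so $K_y\cup K_z=K$. If $K_y=\emptyset$, then $K_z=K$, forcing $z=v^K$. Otherwise $\bigoplus_i y_i=1$; since the $T_j$ cover $\{1,\ldots,n\}$, the common value of the equations satisfied by $y$ equals $\bigoplus_i y_i=1$, so every $T_j$ must meet $K_y$, i.e., $K_y$ is a covering. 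The same reasoning applies to $K_z$. If neither $K_y$ nor $K_z$ equals $K$, they are two \emph{distinct} proper subcoverings of $K$ (distinct because their union is $K$), contradicting the nearly minimal property of $K$.

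For the converse ``$v^K$ extremal $\Rightarrow K$ nearly minimal,'' I would argue by contrapositive. If $K$ is not nearly minimal, then by the reformulation given just before the proposition, there exist distinct $i,j\in K$ such that both $K\backslash\{i\}$ and $K\backslash\{j\}$ are coverings. Setting $y = v^{K\backslash\{i\}}$ and $z = v^{K\backslash\{j\}}$ yields two solutions (by Proposition~\ref{p:simple}) strictly below $v^K$ whose $\oplus$-sum equals $v^K$, so $v^K$ is not extremal.

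The main subtlety lies in the forward direction, in the step showing that $K_y$ is a covering: this relies on the observation that, because $T_1\cup\cdots\cup T_m = \{1,\ldots,n\}$, the common value of any solution of \eqref{e:chain1} coincides with its largest coordinate, so when that maximum equals $1$ the indices attaining it must hit every $T_j$. Once this is established, the rest reduces to bookkeeping with the two equivalent descriptions of ``nearly minimal covering.''
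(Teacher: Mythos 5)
Your proof is correct and follows essentially the same route as the paper: the converse direction (not nearly minimal $\Rightarrow$ not extremal) is identical, and the forward direction reaches the paper's conclusion that a nontrivial decomposition $v^K=y\oplus z$ produces two distinct proper subcoverings of $K$. The only difference is cosmetic: you extract the coverings directly as the level sets $K_y,K_z$ (using that the common value of the chain equals the largest coordinate), whereas the paper routes this step through the $0$--$1$ decomposition of Proposition~\ref{extr01}; your version is, if anything, slightly more self-contained.
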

\begin{proof}
If a covering $K$ is not nearly minimal,
then there exist $i$ and $j$ such that
$K[i]:=K\backslash\{i\}$ and $K[j]:=K\backslash\{j\}$ are both coverings of $T$.
Then $v^{K[i]}$ and $v^{K[j]}$ are both solutions and $v=v^{K[i]}\oplus v^{K[j]}$
hence $v^{K}$ is not extremal.

Conversely, if $v^{K}$ is not extremal,
then there exist $y\neq v^{K}$ and $z\neq v^{K}$
such that $v^K=y\oplus z$. Evidently $y\leq v^K$ and
$z\leq v^K$. By Proposition \ref{extr01}
we can represent
$y$ and $z$ as combinations of $0-1$ solutions of \eqref{e:chain1}.
These solutions
correspond to coverings, which must be proper subsets of $K$.
At least two of these
coverings must be different from each other,
hence $K$ is not nearly minimal.
\end{proof}

Thus, the problem of finding all nearly minimal coverings
of a $0-1$ matrix
is equivalent to the problem of finding all
extremal solutions of \eqref{e:chain1}.

The following
case applies if the critical graph is strongly connected
and occupies all nodes.

\begin{corollary}
\label{c:disjoint}
Let $T_1,\ldots,T_m$ be pairwise disjoint,
then the scaled extremals are precisely
all vectors $v^S=\bigoplus_{i\in S} e^i$, where $S$ is an
index set which contains
exactly one index from each set $T_i$.
\end{corollary}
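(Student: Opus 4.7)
The plan is to reduce this directly to Proposition \ref{mainres-chains}, which tells us that the scaled extremals of \eqref{e:chain1} are in bijection with the nearly minimal coverings of $T$. So the task is to show that, when the sets $T_1,\ldots,T_m$ are pairwise disjoint, the nearly minimal coverings coincide with the sets $S$ that pick exactly one index from each $T_i$.

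First I would observe that, by disjointness, any covering $K$ of $T$ decomposes as a disjoint union $K = \bigsqcup_{i=1}^m (K\cap T_i)$, and each $K\cap T_i$ is nonempty. Hence $K$ is a minimal covering if and only if $|K\cap T_i|=1$ for every $i$, giving exactly the sets $S$ in the statement. Such minimal $S$ have no proper subcovering at all, so they are vacuously nearly minimal, and the corresponding $v^S=\bigoplus_{i\in S}e^i$ are extremal by Proposition \ref{mainres-chains}.

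For the converse, I would argue the contrapositive: if $K$ is a covering that is not of the form in the statement, then (by the disjointness decomposition above) there is some index $i_0$ with $|K\cap T_{i_0}|\geq 2$. Pick any two distinct elements $p,q\in K\cap T_{i_0}$. Since the remaining $K\cap T_j$ for $j\neq i_0$ are untouched, and since $K\cap T_{i_0}\setminus\{p\}$ and $K\cap T_{i_0}\setminus\{q\}$ are both still nonempty, each of $K\setminus\{p\}$ and $K\setminus\{q\}$ is a proper subcovering of $K$. This exhibits two distinct indices whose removal leaves a covering, so $K$ is not nearly minimal.

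There is no real obstacle here; the argument rests entirely on the cleanness of the disjointness assumption, which makes covering behave \emph{component-wise} across the $T_i$. The only thing to be careful about is the convention for ``nearly minimal'': one should check that a strictly minimal covering is admitted (it has zero proper subcoverings, hence \emph{no more than one}), so that the sets $S$ of the corollary do qualify as extremals via Proposition \ref{mainres-chains}.
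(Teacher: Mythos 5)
Your proof is correct. The forward direction (each $S$ picking one index per $T_i$ is a minimal, hence vacuously nearly minimal, covering, so $v^S$ is extremal by Proposition \ref{mainres-chains}) matches the paper. For the completeness direction, however, you take a genuinely different route. The paper argues that the vectors $v^S$ generate the whole solution set of \eqref{e:chain1} and then invokes the fact that any set of scaled generators must contain all scaled extremals, so no further extremals can exist; the generation step itself is left to the reader (``it can be shown''). You instead stay entirely inside the combinatorics of coverings: using that the $T_i$ partition $\{1,\ldots,n\}$, you decompose any covering $K$ as $\bigsqcup_i (K\cap T_i)$ and show that if some $|K\cap T_{i_0}|\geq 2$ then two distinct elements of $K\cap T_{i_0}$ can each be removed while preserving the covering property, so $K$ fails to be nearly minimal. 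This makes the corollary a pure consequence of Proposition \ref{mainres-chains} with every step written out, which is arguably the cleaner completion of the paper's own framework; the paper's route, by contrast, yields the extra information that the $v^S$ generate the solution set, which is of independent interest but is not actually needed once Proposition \ref{mainres-chains} is available. Your closing remark about checking that a strictly minimal covering qualifies as ``nearly minimal'' under the paper's convention (at most one removable index) is exactly the right point of care.
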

\begin{proof}
Any such set $S$ forms a minimal covering of $T$, and it can
be shown that the
solution set of \eqref{e:chain1} is generated by $v^S$, so
there are no more scaled extremals (or nearly minimal coverings).
\end{proof}

\subsection{An algorithm for finding the coefficients
of an attraction system}\label{ss:str}

Coefficients of the system of equations which defines attraction
cone are determined by the entries of $(A^{\Core})^*$ which can be
found in $O((\critcomps+\noncrit)^3)$ operations, where $\critcomps$
is the number of s.c.c. of $\crit(A)$ and $\noncrit$ is the number
of non-critical nodes. However it remains to find the places where
these coefficients appear, i.e., the sets $M_{\nu}^{(r)}(i)$ and
$K^{(r)}(i)$ for $i=1,\ldots,\critnodes$. Solving this problem, we
get another polynomial method for computing the coefficients of
\eqref{mainsys}.

Here we restrict our attention to the case
when $\crit(A)$ is strongly connected.
In this case there are no second terms on both sides
of \eqref{mainsys} and we need only
$K^{(r)}(i)$. The digraph
$\Digr(A^{\Core})$ associated with
the matrix $A^{\Core}$ consists of one critical node which
corresponds to the whole
$\crit(A)$ and will be
denoted by $\mu$, and $\noncrit$ non-critical nodes $\nu(t)$,
for $t>c$. The core matrix is of the form
\begin{equation}
\label{core-def}
A^{\Core}=
\begin{pmatrix}
1 & h\\
g & B
\end{pmatrix},
\end{equation}
the entries $\alpha_{\mu\nu}$ and the entries of
$h$, $g$ and $B=(b_{\nu(s),\nu(t)})$ are given by
\begin{equation}
\label{acentries}
\begin{split}
\alpha_{\mu\mu}&=\bunity,\\
h_{\nu(t)}=\alpha_{\mu\nu(t)}&=\max_{k=1}^c a_{kt},\
g_{\nu(t)}=\alpha_{\nu(t)\mu}=\max_{k=1}^c a_{tk},\ t>c,\\
b_{\nu(s)\nu(t)}=\alpha_{\nu(s)\nu(t)}&=a_{st},\ s>c,\ t>c.
\end{split}
\end{equation}
Denote by $[\to_m i]$ the cyclic class $[j]$ such that
$[j]\to_m[i]$.
For each $t=1,\ldots,\critnodes$,
we initialize Boolean $\critnodes$-vectors $P_t$ by
\begin{equation}
\label{pdef}
P_t(i)=
\begin{cases}
1, &\text{if $[\to_1 i]\cap\arg\max\limits_{k=1}^{\critnodes}
a_{kt}\ne\emptyset$}\\
0, &\text{otherwise.}
\end{cases}
\end{equation}
$P_t$ encode the Boolean information associated with
$h$ (shifted cyclically by $1$).

Further we compute the Kleene star
of the non-critical submatrix $B:=A_{MM}$, where
$M$ denotes the set of non-critical nodes, and store
the information on the lengths
of paths with maximal weight and length not exceeding
$\noncrit$ in sets $U_{st}$ associated to
each entry of $B$. To compute these sets
we use the formula
\begin{equation}
\label{noncrit-star}
B^*=I\oplus B\oplus\ldots\oplus B^{\noncrit-1},
\end{equation}
where for each entry of this matrix series we
find the arguments of maxima.

To combine the information associated
with $h$ and $B^*$, we recall the max-algebraic version of
bordering method \cite{Car-71}, which computes
\begin{equation}
\label{bordering}
(A^C)^*=
\begin{pmatrix}
\bunity & h^T\\
g & B
\end{pmatrix}^*
=
\begin{pmatrix}
\bunity & h^T\otimes B^*\\
B^*\otimes g & B^*\oplus B^*\otimes g\otimes h^T\otimes B^*
\end{pmatrix},
\end{equation}
where $h,g\in\R^{\noncrit}$. Note that all information that we need
for system \eqref{mainsys}, is in the entries of $h^T\otimes B^*$
and in the indices of equations of the system where the entries of
$h^T\otimes B^*$ appear. Computing $(h^T\otimes B^*)_i$ means in
particular obtaining the ``winning'' indices
\begin{equation}
\label{wtdef}
W_t=\arg\max_{s>c} h_{\nu(s)} b_{\nu(s)\nu(t)}^*.
\end{equation}
After that, the idea is to combine $P_s$ with $U_{st}$ for all $s\in
W_t$ and unite the obtained indices. More precisely, for each number
$m$ stored in $U_{st}$ we define the shifted Boolean vector
$P_s^{\to m}$ by
\begin{equation}
\label{pmdef1}
P_s^{\to m}(i)=
\begin{cases}
1, &\text{if $[\to_{m+1} i]\cap\arg\max\limits_{k=1}^{\critnodes}
a_{ks}\ne\emptyset$}\\
0, &\text{otherwise.}
\end{cases}
\end{equation}
Equivalently,
\begin{equation}
\label{pmdef2}
P_s^{\to m}(i)=1\Leftrightarrow P_s(j)=1\ \text{and}\ [j]\to_m[i].
\end{equation}
After that, we define
\begin{equation}
\label{pmunu1def}
G_t:=\bigvee_{s\in W_t}\bigvee_{m\in U_{st}} P_s^{\to m}.
\end{equation}

\begin{proposition}
\label{p:gti}
Let $t>c$, and let $r\geq T(A)$ be a multiple of
$\caa$. Then for all $i\leq\critnodes$,
$t\in K^{(r)}(i)$ if and only if $G_t(i)=1$.
\end{proposition}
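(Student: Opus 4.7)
The plan is to interpret $a_{it}^{(r)}$ as the maximum weight of a walk of length $r$ from $i$ to $t$ in $\Digr(A)$, and to set up a correspondence between such walks of weight $\alpha_{\mu\nu(t)}^*$ and the triples $(s,m,j)$ that cause $G_t(i)$ to equal $1$. First I would establish the anatomy of a maximum-weight walk from a critical $i$ to the non-critical $t$. Because $A$ is visualized, each edge has weight at most $\bunity$, with equality iff the edge is critical. Letting $j'$ be the last critical vertex on the walk and $s$ its immediate successor (necessarily non-critical), the walk decomposes as a prefix from $i$ to $j'$ of length $\ell_1$, the crossing edge $j'\to s$, and a suffix from $s$ to $t$ of length $\ell_2$ lying entirely in the non-critical subgraph, with $\ell_1+1+\ell_2=r$. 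All cycles in the non-critical subgraph have weight strictly less than $\bunity$, so a maximum-weight non-critical walk from $s$ to $t$ is simple; hence $\ell_2\leq\noncrit$, and the weight $b_{\nu(s)\nu(t)}^*$ is realized exactly for $\ell_2\in U_{st}$.

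For the forward direction, assume $t\in K^{(r)}(i)$ and take an optimal walk. Its weight is bounded above by $\bunity\cdot h_{\nu(s)}\cdot b_{\nu(s)\nu(t)}^*\leq\alpha_{\mu\nu(t)}^*$, and equality forces (i) the prefix to be critical, so $[i]\to_{\ell_1}[j']$; (ii) $a_{j's}=h_{\nu(s)}$, so $j'\in\arg\max_{k\leq c}a_{ks}$; (iii) $\ell_2\in U_{st}$; and (iv) $s\in W_t$. Since $r$ is a multiple of $\caa$, the identity $\ell_1+1+\ell_2=r$ translates into $[j']\to_{\ell_2+1}[i]$, which I decompose uniquely as $[j']\to_1[j]$ and $[j]\to_{\ell_2}[i]$. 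This yields $j'\in [\to_1 j]\cap \arg\max_k a_{ks}$, so $P_s(j)=1$; combined with $[j]\to_{\ell_2}[i]$ it gives $P_s^{\to \ell_2}(i)=1$, and together with $s\in W_t$ and $\ell_2\in U_{st}$ this proves $G_t(i)=1$.

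For the converse, if $G_t(i)=1$ then there exist $s\in W_t$, $m\in U_{st}$, and $j$ with $[j]\to_m[i]$ such that $P_s(j)=1$; unpacking the latter provides $j'\in\arg\max_k a_{ks}$ with $[j']\to_1[j]$, hence $[j']\to_{m+1}[i]$. Setting $\ell_1=r-1-m$ gives $\ell_1\geq 0$ (since $r\geq T(A)$ is a large multiple of $\caa$ and $m\leq\noncrit$) and $\ell_1\equiv\caa-1-m\pmod{\caa}$, which is exactly the access congruence for $[i]\to_{\ell_1}[j']$, so a critical walk of length $\ell_1$ from $i$ to $j'$ exists in the strongly connected critical graph. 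Concatenating this critical walk with the crossing edge $j'\to s$ (weight $h_{\nu(s)}$) and a maximum-weight non-critical walk from $s$ to $t$ of length $m$ (weight $b_{\nu(s)\nu(t)}^*$), I obtain a walk of length $r$ and weight $\alpha_{\mu\nu(t)}^*$; with the matching upper bound from Theorem~\ref{t:maxentries}, conclude $t\in K^{(r)}(i)$.

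The main obstacle will be handling the modular bookkeeping on cyclic classes: verifying that $[j']\to_{\ell_2+1}[i]$ decomposes unambiguously as $[j']\to_1[j]\to_{\ell_2}[i]$ via the unique intermediate class $[j]$, and in the converse that the chosen $\ell_1$ is simultaneously nonnegative, realizable as the length of a walk in the critical graph, and congruent to the correct access residue modulo $\caa$ --- all of which hinges on $r\geq T(A)$ being a sufficiently large multiple of $\caa$.
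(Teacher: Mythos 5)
Your proposal is correct and follows essentially the same route as the paper's proof: decompose an optimal length-$r$ walk from $i$ to $t$ at the last critical node into a critical prefix, a crossing edge attaining $h_{\nu(s)}$, and a non-critical suffix of weight $b^*_{\nu(s)\nu(t)}$ whose length lies in $U_{st}$, then translate the congruence $\ell_1+1+\ell_2\equiv 0\ (\modd\ \caa)$ into the cyclic-class condition $[j']\to_{m+1}[i]$ defining $P_s^{\to m}(i)=1$. Your treatment of the modular bookkeeping and of the existence of a critical walk of the required length for large $r$ is in fact somewhat more explicit than the paper's.
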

\begin{proof}
$G_t(i)=1$ if and only if there exist $s\in W_t$ and $m\in U_{st}$
such that $P_s^{\to m}(i)=1$. Then there exists a path
$\pi_1\circ\tau\circ\pi_2$ where $\pi_1$ starts in $i$, belongs to
$\crit(A)$ and has length $-m-1(\modd \;\gamma)$, $\tau$ is an edge
which attains $h_{\nu(s)}=\max_{l=1}^{\critnodes} a_{ls}$, and
$\pi_2$ is entirely non-critical, has length $m$, weight
$b_{\nu(s)\nu(t)}^*$, and connects $s$ to $t$. The weight of
$\pi_1\circ\tau\circ\pi_2$ is equal to $1\cdot h_s\cdot
b_{st}^*=\alpha_{\mu\nu(t)}^*$, and the length is a multiple of
$\caa$, meaning that $a_{it}^{(r)}=\alpha_{\mu\nu(t)}^*$ and $t\in
K^{(r)}(i)$.

The other way around, let $t\in K^{(r)}(i)$. Then there exists a
path $P$ of length $r$ that connects $i$ to $t$ and has weight
$\alpha_{\mu\nu(t)}^*=\bigoplus_s h_{\nu(s)} b_{\nu(s)\nu(t)}^*$. We
can decompose $P=\pi_1'\circ\tau'\circ\pi'_2$, where $\tau'$ is an
edge connecting a critical node to a non-critical node, and $\pi'_2$
has only non-critical nodes. Obviously
$w(\tau'\circ\pi'_2)\leq\bigoplus_s h_{\nu(s)} b_{\nu(s)\nu(t)}^*=
\alpha_{\mu\nu(t)}^*$. But $w(P)=\alpha^*_{\mu\nu(t)}$, hence
$w(\pi'_1)=1$ so that $\pi'_1$ entirely belongs to $\crit(A)$, and
$w(\tau'\circ\pi'_2)=\alpha^*_{\mu\nu(t)}$. In particular,
$w(\tau')=h_{\nu(s)}$ and $w(\pi'_2)=b^*_{\nu(s)\nu(t)}$ for certain
$s>c$. From this we conclude that $G_t(i)=1$.
\end{proof}

Summarizing above said, we have the following algorithm for
computing the coefficients of \eqref{mainsys} in the case when
$\crit(A)$ is strongly connected. Recall that in this case there is
no second term on both sides of \eqref{mainsys}. The computation of
coefficients of the third term includes the computation of $h\otimes
B^*$ and the sets $K^{(r)}(i)$ for $i\leq\critnodes$ (in fact we can
improve the algorithm since only $\caa$ of them are different).

\vskip 1cm
\noindent{\bf ALGORITHM 1.}\ {\em  Compute the coefficients of \eqref{mainsys} if
$\crit(A)$ is strongly connected.}
\msn

{\bf Input.} Visualized matrix $A$, critical graph $\crit(A)$
which is strongly connected and the cyclic classes of $\crit(A)$.\\
{\bf 1.} Compute $h$ and initialize $P_t$ for $t>\critnodes$. This
takes
$\critnodes\noncrit$ operations.\\
{\bf 2.} Compute $B^*$ and sets $U_{st}$ for all $s,t>c$. It takes
$O(\noncrit^4)$ operations
(see \eqref{noncrit-star}).\\
{\bf 3.} Compute $hB^*$ and $G_t$ for $t>c$, by \eqref{wtdef},
\eqref{pmdef2} and \eqref{pmunu1def}. Computation of $hB^*$ and
$W_t$ by \eqref{wtdef} requires $O(\noncrit^2)$ operations,
computation of shifted Boolean vectors is $O(\critnodes
\noncrit^2)$, and the
conjunction \eqref{pmunu1def} takes $O(\critnodes \noncrit^3)$ operations.\\
{\bf 4.} Compute $K^{(r)}(i)$ using Proposition
\ref{p:gti}. This requires $\critnodes\noncrit$
operations.

\msn
As the overall complexity does not exceed $O(\noncrit^4)+O(\critnodes\noncrit^3)=
O(n\noncrit^3)$ operations,
we conclude the following.

\begin{proposition}
\label{t:ccsmart}
Let $A\in\Rnn$ be visualized, $\crit(A)$ be strongly connected,
$\noncrit$ be the number of non-critical nodes, and
suppose we know $\crit(A)$ and all cyclic classes.
Then Algorithm 1 computes the coefficients of the attraction
system in no more than $O(\noncrit^3 n)$ operations.
\end{proposition}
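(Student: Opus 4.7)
The plan is to establish correctness of Algorithm 1 and then sum the costs of its four steps to obtain the $O(\noncrit^3 n)$ bound.

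For correctness, strong connectedness of $\crit(A)$ means there is a single critical component $\mu$, so the second group of terms in \eqref{mainsys} vanishes and only the cyclic-class terms $\bigoplus_{k\in[i]}x_k$ and the non-critical terms $\alpha^*_{\mu\nu(t)} x_t$ for $t\in K^{(r)}(i)$ remain. The scalars $\alpha^*_{\mu\nu(t)}$ are exactly the entries of $h^T\otimes B^*$ by the bordering identity \eqref{bordering}, which Step 3 produces; the index sets $K^{(r)}(i)$ are recovered from the vectors $G_t$ by Proposition \ref{p:gti}, which Step 4 does. Thus the four steps together output precisely the coefficients required to write \eqref{mainsys}.

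For the complexity I would check each step in turn. Step 1 scans the first $\critnodes$ rows of $A$ and costs $O(\critnodes\noncrit)$. Step 2 computes $B^*$ via the Kleene series \eqref{noncrit-star}, i.e. at most $\noncrit$ max-algebraic products of $\noncrit\times\noncrit$ matrices for $O(\noncrit^4)$ operations, with the argmax sets $U_{st}$ tracked at no extra asymptotic cost. Step 3 consists of forming $h^T\otimes B^*$ and the sets $W_t$ in $O(\noncrit^2)$, building the shifted Boolean vectors $P_s^{\to m}$ in $O(\critnodes\noncrit^2)$ by a single cyclic-class relabelling of $P_s$ per pair $(s,m)$, and finally computing the disjunctions \eqref{pmunu1def}, which for each $t$ OR together $O(\noncrit^2)$ such $\critnodes$-vectors at cost $O(\critnodes)$ apiece, giving $O(\critnodes\noncrit^3)$ in total. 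Step 4 reads off $K^{(r)}(i)$ from the $G_t(i)$ values in $O(\critnodes\noncrit)$. Summing, and using $n=\critnodes+\noncrit$, yields $O(\noncrit^4)+O(\critnodes\noncrit^3)=O(n\noncrit^3)$.

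The one delicate point is the accounting in Step 3: one must be sure the shifted vectors $P_s^{\to m}$ are obtained from $P_s$ by a single relabelling according to the precomputed cyclic-class structure (rather than recomputed from \eqref{pmdef1}), and that the outer loop over $t$ combined with the inner loops over $s\in W_t$ and $m\in U_{st}$ enumerates $O(\noncrit^2)$ shifted vectors per $t$. Once that bookkeeping is in place, the rest is a routine tally of cost bounds which yields the claimed estimate.
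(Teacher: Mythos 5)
Your proposal is correct and follows essentially the same route as the paper: the paper's argument is precisely the per-step cost annotations in the description of Algorithm~1 (Step 1: $O(\critnodes\noncrit)$; Step 2: $O(\noncrit^4)$ via \eqref{noncrit-star}; Step 3: $O(\noncrit^2)+O(\critnodes\noncrit^2)+O(\critnodes\noncrit^3)$; Step 4: $O(\critnodes\noncrit)$) summed to $O(\noncrit^4)+O(\critnodes\noncrit^3)=O(n\noncrit^3)$, with correctness resting on Proposition~\ref{p:gti} and the bordering identity \eqref{bordering} exactly as you state. Your explicit remark about obtaining $P_s^{\to m}$ by relabelling rather than recomputation is a sensible clarification of bookkeeping the paper leaves implicit, but it does not change the argument.
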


It is also important that the eigenvalue and an eigenvector of
irreducible matrix can be computed by the policy iteration algorithm
of \cite{Coc-98}, which is very fast in practice. After that,
$\crit(A)$ and the cyclic classes can be computed in $O(n^2)$ time.
Thus we are led to an efficient method of computing the coefficients
in the case when $A$ is irreducible and $\crit(A)$ is strongly
connected, especially in the case when the number of non-critical
nodes is small. Note that the case of irreducible $A$ and strongly
connected $\crit(A)$ is generic when matrices $A$ are real and
generated at random. Also, in this generic case it almost never
happens that maxima in blocks or among the weights of paths are
achieved twice, which means that we do not need to assign Boolean
vectors or sets to each entry. In this case the total number of
operations in the algorithm is reduced to
$O(\noncrit^3)+O(\noncrit\critnodes)$.

When $\crit(A)$ is not strongly connected, the bordering method
\eqref{bordering} can be used to obtain an
algorithm which operates only
with the entries of $A^{\Core}$. However, the complexity of
operations with indices and Boolean numbers significantly
increases in that general case.

\section{Examples}

\subsection{Square multiplication}

In this subsection we will examine the problems that can be solved
by matrix squaring on $9\times 9$
real matrix over the {\em max-plus semiring}:
\begin{equation*}
A=
\begin{pmatrix}
-1 & 0 & -1 & -1 & -9 & -7 & -10  & -4 & -8\\
0 & -1 & 0 & -1 & -10 & -1 & -10  & -9 & -4\\
-1 & -1 & -1 & 0 & -2 & -3 & -2   & -6 & -6\\
0 & -1 & -1 & -1 & -10 & -6 & -10 & -6 & -1 \\
-10 & -2 & -8 & -1 & -1 & 0 & -1  & -10 & -1\\
-5 & -5 & -10 & -9 & -1 & -1 & 0  & -3 & -6\\
-9 & -10 & -7 & -10 & 0 & -1 & -1 & -8 & -8\\
-75 & -80 & -77 & -83 & -80 & -77 & -82 & -2 & -0.5\\
-84 & -81 & -77 & -80 & -78 & -77 & -78 & -0.5 & -2
\end{pmatrix}
\end{equation*}

The corresponding max-times example is obtained by, e.g., taking
{\em exponents} of the entries.

The critical graph of $A$, see Figure 3, has two s.c.c.: $C_1$ with nodes 
$N_1=\{1,2,3,4\}$ and $C_2$ with nodes $N_2=\{5,6,7\}$.
The cyclicity of $C_1$ is $\gamma_1=2$ and the cyclicity
of $C_2$ is $\gamma_2=3$, so the cyclicity of $\critgraph(A)$ is
$\gamma=\lcm(2,3)=2\times 3=6$.

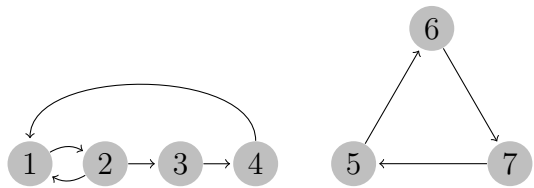
\begin{figure}[h]
\begin{tabular}{cccc}
\begin{tikzpicture}[shorten >=1pt,->]
  \tikzstyle{vertex1}=[circle,fill=black!25,minimum size=17pt,inner sep=1pt]
\foreach \name/\x in {1/1, 2/2, 3/3, 4/4}
    \node[vertex1] (\name) at (\x,0) {$\name$};
\draw (2) -- (3);
\draw (3) -- (4);
\draw (1) .. controls +(30:0.5cm) and +(150:0.5cm) .. (2);
\draw (2) .. controls +(-150:0.5cm) and +(-30:0.5cm) .. (1);
\draw (4) .. controls +(90:1.3cm) and +(90:1.3cm) .. (1);
\end{tikzpicture}&&
\begin{tikzpicture}[shorten >=1pt,->]
  \tikzstyle{vertex1}=[circle,fill=black!25,minimum size=17pt,inner sep=1pt]

\foreach \name/\angle/\text in {5/-150/5, 6/90/6, 
                                  7/-30/7}
    \node[vertex1,xshift=0cm,yshift=0cm] (\name) at (\angle:1.2cm) {$\text$};
\foreach \from/\to in {5/6,6/7,7/5}
    \draw (\from) -- (\to);
\end{tikzpicture}
\end{tabular}
\caption{The critical graph of $A$}
\end{figure}

The matrix can be decomposed into blocks
\begin{equation*}
A=
\begin{pmatrix}
A_{11} & A_{12} & A_{1M}\\
A_{21} & A_{22} & A_{2M}\\
A_{M1} & A_{M1} & A_{MM}
\end{pmatrix},
\end{equation*}
where the submatrices $A_{11}$ and $A_{22}$
correspond to two s.c.c. $C_1$ and $C_2$ of
$\critgraph(A)$, see Figure 3. They equal 

\begin{equation*}
A_{11}=
\begin{pmatrix}
-1 & 0 & -1 & -1 \\
0 & -1 & 0 & -1 \\
-1 & -1 & -1 & 0 \\
0 & -1 & -1 & -1
\end{pmatrix},\quad
A_{22}=
\begin{pmatrix}
-1 & 0 & -1\\
-1 & -1 & 0\\
0 & -1 & -1
\end{pmatrix},
\end{equation*}
and $A_{MM}$ is the non-critical principal submatrix
\begin{equation*}
A_{MM}=
\begin{pmatrix}
-2 & -0.5\\
-0.5 & -2
\end{pmatrix}.
\end{equation*}
The submatrices $A_{12}$, $A_{21}$, $A_{1M}$ and $A_{2M}$
are composed of randomly taken numbers from $-1$ to $-10$, and
$A_{M1}$ and $A_{M2}$ are composed of randomly taken numbers from
$-75$ to $-85$.

It can be checked that the powers of $A$ become periodic after $T(A)=154$.

We will consider the following instances of problems
P2 and P3.

\noindent
P2. Compute $A^r$ for $r\geq T(A)$ and $r\equiv 2(\modd\; 6)$.\\
P3. For given $x\in\R^9$, find ultimate orbit period of $A^k\otimes x$.

{\bf Solving P2.} Using the idea of Proposition \ref{blanka}, we perform $7$
squarings $A,A^2,A^4,\ldots$ to raise $A$ to the power $128>9\times 9$.
This brings us to the matrix
\begin{equation*}
A^{128}=
\begin{pmatrix}
A_{11}^{(128)} & A_{12}^{(128)} & A_{1M}^{(128)}\\[1.5 ex]
A_{21}^{(128)} & A_{22}^{(128)} & A_{2M}^{(128)}\\[1.5 ex]
A_{M1}^{(128)} & A_{M2}^{(128)} & A_{MM}^{(128)}
\end{pmatrix},
\end{equation*}
where
\begin{equation*}
A_{11}^{(128)}=
\begin{pmatrix}
0 & -1 & 0 & -1 \\
-1 & 0 & -1 & 0 \\
0 & -1 & 0 & -1 \\
-1 & 0 & -1 & 0
\end{pmatrix},\quad
A_{22}^{(128)}=
\begin{pmatrix}
-1 & -1 & 0\\
0 & -1 & -1\\
-1 & 0 & -1
\end{pmatrix},
\end{equation*}
all entries of $A_{12}^{(128)}$ and
$A_{21}^{(128)}$ are $-1$ and

\begin{align*}
A_{1M}^{(128)}=
\begin{pmatrix}
-2.5 & -1\\
-1.5 & -2\\
-2.5 & -1\\
-1.5 & -2
\end{pmatrix},\quad &
A_{2M}^{(128)}=
\begin{pmatrix}
-1.5 & -2\\
-2.5 & -2\\
-2.5 & -1
\end{pmatrix}\\
A_{M1}^{(128)}=
\begin{pmatrix}
-76 & -75.5\\
-75 & -76.5\\
-76 & -75.5\\
-75 & -76.5
\end{pmatrix}^T,\quad &
A_{M2}^{(128)}=
\begin{pmatrix}
-76 & -76.5\\
-76 & -76.5\\
-76 & -76.5
\end{pmatrix}^T
\end{align*}

We are lucky since $128\equiv 2(\modd\; 6)$, as we already have true
critical columns and rows of $A^r$.
However, the non-critical principal submatrix of $A^{128}$ is
\begin{equation*}
A^{(128)}_{MM}=
\begin{pmatrix}
-64 & -65.5\\
-65.5 & -64
\end{pmatrix}.
\end{equation*}
It can be checked that this is {\em not} the non-critical submatrix of $A^r$
that we seek (recall that $T(A)=154$). 
Hence, it remains to compute the principal
non-critical submatrix $A^{(r)}_{MM}$. 

We note that $A^{132}$ has critical rows and columns
of the spectral projector $Q(A)$, since $132$ is a multiple of 
$\gamma=6$. In $A^{132}$, the critical rows and columns
$1-4$ (in $C_1$) are the same as that of $A^{128}$, since $\gamma_1=2$
and both $128$ and $132$ are even. The critical rows $5-7$ (in $C_2$)
can be computed from those of $A^{128}$ by 
cyclic permutation $(5,6,7)\to(7,5,6)$, and
the critical rows $5-7$ can be computed by the inverse permutation
$(5,6,7)\to(6,7,5)$. This implies that all blocks in $A^{132}$
are the same as in $A^{128}$ above (in the analogous block decomposition
of $A^{132}$), 
except for
\begin{equation*}
A_{22}^{(132)}=
\begin{pmatrix}
0 & -1 & -1\\
-1 & 0 & -1\\
-1 & -1 & 0
\end{pmatrix},\quad
A_{2M}^{(132)}=
\begin{pmatrix}
-2.5 & -2\\
-2.5 & -1\\
-1.5 & -2
\end{pmatrix}.
\end{equation*}

Now the remaining non-critical submatrix of $A^r$ can be computed
using linear dependence \eqref{e:lindep}, which specifies to
\begin{equation*}
A_{\cdot k}^{(r)}=\bigoplus_{i=1}^7 a_{ik}^{(132)} A_{\cdot i}^{(128)},\quad
k=8,9.
\end{equation*}
This yields
\begin{equation*}
A_{MM}^{(r)}=
\begin{pmatrix}
-76.5 & -77\\
-78 & -76.5
\end{pmatrix}
\end{equation*}

{\bf Solving P3} We examine the orbit period of $A^kx$ for $x=x^1,x^2,x^3,x^4$,
where
\begin{equation*}
\begin{array}{c@{{}\quad{}}cccccccc}
x^1=[1 & 2 & 3 & 4 & 5 & 6 & 7 & 8 & 9],\\[1.5 ex]
x^2=[1 & 2 & 3 & 4 & 0 & 0 & 0 & 0 & 0],\\[1.5 ex]
x^3=[0 & 0 & 1 & 1 & 0 & 0 & 1 & 1 & 1],\\[1.5 ex]
x^4=[0 & 0 & 1 & 1 & 0 & 0 & 0 & 0 & 0].
\end{array}
\end{equation*}
We compute $y=A^{128}x$ for $x=x^1,x^2,x^3,x^4$:
\begin{equation*}
\begin{array}{c@{{}\quad{}}cccccccc}
y^1=A^{128}\otimes x^1=[8 & 7 & 8 & 7 & 7 & 7 & 8 &\times & \times],\\[1.5 ex]
y^2=A^{128}\otimes x^2=[3 & 4 & 3 & 4 & 3 & 3 & 3 &\times & \times],\\[1.5 ex]
y^3=A^{128}\otimes x^3=[1 & 1 & 1 & 1 & 1 & 0 & 0 &\times & \times],\\[1.5 ex]
y^4=A^{128}\otimes x^4=[1 & 1 & 1 & 1 & 0 & 0 & 0 &\times & \times].
\end{array}
\end{equation*}
Here $\times$ correspond to non-critical entries which we do not need.
The cyclic classes of $C_1$ are $\{1,3\}$, $\{2,4\}$,
and the cyclic classes of $C_2$ are $\{5\}$, $\{6\}$ and $\{7\}$.
From the considerations of Proposition \ref{blanka},
it follows that the coordinate sequences $\{(A^r x)_i,\ r\geq T(A)\}$
are  
\begin{align*}
& y_1,\; y_2,\; y_1,\;y_2,\ldots,\ \text{for $i=1,2,3,4,$}\\
& y_5,\; y_6,\; y_7,\; y_5,\; y_6,\; y_7,\ldots,\ \text{for $i=5,6,7$.}
\end{align*}
Looking at $y^1,\ldots,y^4$ above, we conclude that the orbit
of $x^1$ is of the largest possible period $6$, the orbit of $x^2$ 
is of the period $2$ (in other words, $x^2\in\Attr(A,2)$), 
the orbit of $x^3$ is of the period $3$ (i.e., $x^3\in\Attr(A,3)$),
and the orbit of $x^4$ is of the period $1$ (i.e., $x^1\in\Attr(A,1)$).

\subsection{Circulants}

Here we consider a $9\times 9$ example of
definite and visualized matrix
in {\em max-plus algebra}

\begin{equation}
\label{circ-example}
A=
\begin{pmatrix}
-8  &   0   & -1 &   -8 &   -8   & -9  &  -4   & -5 &   -1\\
    -4  &  -5  &   0  &  -2  &  -6    & 0  &  -7  &  -3  &  -9\\
    -7  &  -9   & -8  &   0  &  -8   & -4  &  -6  &  -9  & -10\\
    -8  &  -8  & -10  &  -7  &   0   & -4  &  -6  & -10   & -1\\
    -2  &  -8  &  -7  &  -4  &  -8     &0  &  -3  &  -1  & -10\\
     0  &  -1  &  -2  &  -7  & -10    &-6  &  -3   & -6   & -1\\
   -10  &  -7  &  -7  &  -7  &  -6   & -1  &  -5    & 0   & -9\\
    -8  &  -3  &  -6  &  -8  &  -6   & -8  &  -5   &-10   &  0\\
    -4  &  -3  &  -5 &   -6  &  -6  & -10  &   0  &  -6   & -9
\end{pmatrix}
\end{equation}

The critical graph of this matrix consists of two
s.c.c. comprising $6$ and $3$ nodes respectively.
They are shown in Figures \ref{twocomp} and \ref{cyclasses}, together
with their cyclic classes.

\begin{figure}[h]
\begin{tikzpicture}[shorten >=1pt,->]
  \tikzstyle{vertex}=[circle,fill=black!25,minimum size=25pt,inner sep=1pt]

\foreach \name/\angle/\text in {1/180/1, 2/120/2,
                                  3/60/3, 4/0/4, 5/-60/5, 6/-120/6}
    \node[vertex,xshift=0cm,yshift=0cm] (\name) at (\angle:1.6cm) {$\text$};
\draw (2) -- (6);
\foreach \from/\to in {1/2,2/3,3/4,4/5,5/6,6/1}
    \draw (\from) -- (\to);

 \node[vertex,xshift=5cm,yshift=0cm] (7) at (-150:1.3cm) {$7$};
    \node[vertex,xshift=5cm,yshift=0cm] (8) at (90:1.3cm) {$8$};
    \node[vertex,xshift=5cm,yshift=0cm] (9) at (-30:1.3cm) {$9$};
\foreach \from/\to in {7/8,8/9,9/7}
    \draw (\from) -- (\to);
\end{tikzpicture}
\caption{Critical graph of \eqref{circ-example}\label{twocomp}}
\end{figure}
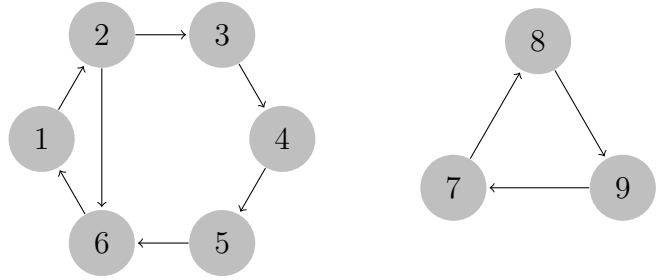

\begin{figure}[h]
\begin{tikzpicture}[shorten >=1pt,->]

\tikzstyle{vertex1}=[circle,fill=black!15,minimum size=25pt,inner sep=1pt]
\tikzstyle{vertex2}=[circle,fill=black!30,minimum size=25pt,inner sep=1pt]
\tikzstyle{vertex3}=[circle,fill=black!45,minimum size=25pt,inner sep=1pt]

\node[vertex1,xshift=0cm,yshift=0cm] (1) at (180:1.6cm) {$I$};
\node[vertex1,xshift=0cm,yshift=0cm] (4) at (0:1.6cm) {$I$};
\node[vertex2,xshift=0cm,yshift=0cm] (2) at (120:1.6cm) {$II$};
\node[vertex2,xshift=0cm,yshift=0cm] (5) at (-60:1.6cm) {$II$};
\node[vertex3,xshift=0cm,yshift=0cm] (3) at (60:1.6cm) {$III$};
\node[vertex3,xshift=0cm,yshift=0cm] (6) at (-120:1.6cm) {$III$};

\node[vertex1,xshift=5cm,yshift=0cm] (7) at (-150:1.3cm) {$IV$};
\node[vertex2,xshift=5cm,yshift=0cm] (8) at (90:1.3cm) {$V$};
\node[vertex3,xshift=5cm,yshift=0cm] (9) at (-30:1.3cm) {$VI$};

\end{tikzpicture}
\caption{Cyclic classes of the critical graph\label{cyclasses}}
\end{figure}
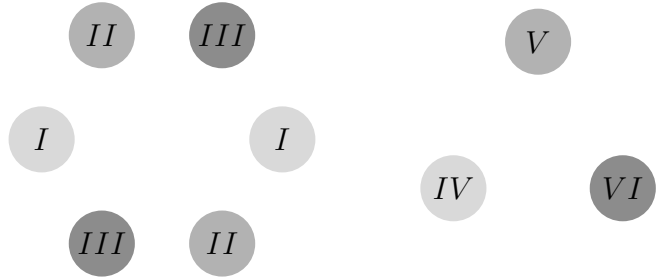

The components of $C(A)$ induce block decomposition
\begin{equation}
\label{block00}
A=
\begin{pmatrix}
A_{11} & A_{12}\\
A_{21} & A_{22}
\end{pmatrix},
\end{equation}
where
\begin{equation}
\label{a11&a22}
A_{11}=
\begin{pmatrix}
-8  &   0   & -1 &   -8 &   -8   & -9  \\
-4  &  -5  &   0  &  -2  &  -6    & 0 \\
-7  &  -9   & -8  &   0  &  -8   & -4 \\
-8  &  -8  & -10  &  -7  &   0   & -4 \\
-2  &  -8  &  -7  &  -4  &  -8     &0 \\
0  &  -1  &  -2  &  -7  & -10 & -6
\end{pmatrix},\quad
A_{22}=
\begin{pmatrix}
-5    & 0   & -9\\
-5   &-10   &  0\\
0  &  -6   & -9
\end{pmatrix}
\end{equation}
The core matrix and its Kleene star are equal to
\begin{equation}
\label{cmatxkls}
A^{\Core}=(A^{\Core})^*=
\begin{pmatrix}
0 & -1\\
-1 & 0
\end{pmatrix}.
\end{equation}
By calculating $A,A^2,\ldots$ we obtain
that the powers of $A$ become periodic after $T(A)=6$.
In the block decomposition of $A^6$ analogous to \eqref{block00},
we have the following circulants:
\begin{equation}
\label{block06}
\begin{split}
A_{11}^{(6)}=
\begin{pmatrix}
0   & -1 &   -2  &   0 &   -1  &  -2 \\
-2  &   0  &  -1  &  -2  &   0  &  -1  \\
-1  &  -2  &   0  &  -1  &  -2  &   0 \\
 0  &  -1  &  -2  &   0  &  -1  &  -2 \\
-2  &   0  &  -1  &  -2  &   0  &  -1 \\
-1  &  -2  &   0  &  -1  &  -2  &   0
\end{pmatrix},\quad &
A_{12}^{(6)}=
\begin{pmatrix}
-2 &   -1  &  -1\\
-1  &  -2  &  -1\\
-1  &  -1   & -2\\
-2  &  -1   & -1\\
-1  &  -2   & -1\\
-1  &  -1   & -2\\
\end{pmatrix},\\
A_{21}^{(6)}=
\begin{pmatrix}
-3  &  -1  &  -2  &  -3  &  -1  &  -2 \\
-2  &  -3  &  -1  &  -2  &  -3  &  -1 \\
-1  &  -2  &  -3  &  -1  &  -2  &  -3
\end{pmatrix},\quad &
A_{22}^{(6)}=
\begin{pmatrix}
0  &  -3   & -2\\
-2  &   0   & -3\\
-3  &  -2   &  0
\end{pmatrix}.
\end{split}
\end{equation}

The corresponding blocks of ``reduced'' power $\Tilde{A}^{(6)}$ are
\begin{equation}
\label{block06tilde}
\begin{split}
\Tilde{A}_{11}^{(6)}=
\begin{pmatrix}
0   & -1 &   -2 \\
-2  &   0  &  -1\\
-1  &  -2  &   0\\
 \end{pmatrix},\quad &
\Tilde{A}_{12}^{(6)}=
\begin{pmatrix}
-2 &   -1  &  -1\\
-1  &  -2  &  -1\\
-1  &  -1   & -2\\
\end{pmatrix},\\
\Tilde{A}_{21}^{(6)}=
\begin{pmatrix}
-3  &  -1  &  -2\\
-2  &  -3  &  -1\\
-1  &  -2  &  -3
\end{pmatrix},\quad &
\Tilde{A}_{22}^{(6)}=
\begin{pmatrix}
0  &  -3   & -2\\
-2  &   0   & -3\\
-3  &  -2   &  0
\end{pmatrix}.
\end{split}
\end{equation}

Note that $\Tilde{A}_{11}^{(6)}$
and $\Tilde{A}_{22}^{(6)}$ are
Kleene stars, with all off-diagonal entries negative.

Using \eqref{block06}, we
specialize system \eqref{mainsys}
to our case, we see that this system of equations for
the attraction cone $\Attr(A,1)$ consists of two chains of equations, namely

\begin{equation}
\begin{split}
x_1\oplus x_4\oplus(x_8-1)&\oplus(x_9-1)=\\
&=x_2\oplus x_5\oplus(x_7-1)\oplus(x_9-1)=x_3\oplus x_6\oplus(x_7-1)\oplus(x_8-1),\\
(x_2-1)\oplus (x_5-1)&\oplus x_7=\\
&=(x_3-1)\oplus(x_6-1)\oplus x_8=(x_1-1)\oplus(x_4-1)\oplus x_9.
\end{split}
\end{equation}

Note that only $0$ and $-1$,
the coefficients of $(A^{\Core})^*$ (which is equal to
$A^{\Core}$ in our example),
appear in this system.

\subsection{Algorithm for the strongly connected case.}

Here we consider a $6\times 6$ max-plus example
\begin{equation}
\label{algoex}
A=
\begin{pmatrix}
-3  &  0  &  -1 &  -19  &  -7 &   -3\\
-3  &  -4 &   0  & -10 &  -19  & -16\\
 0 &   -3  &  -1 &  -10  &  -8   & -8\\
-1  &  -4  &  -4  &  -1  &  -1  &  -3\\
-1  &  -1  &  -4  &  -2  &  -4  &  -1\\
-4  &  -2  &  -4  &  -1  &  -4  &  -1
\end{pmatrix},
\end{equation}
and apply to it the algorithm described in Subsect.~\ref{ss:str}.
The critical graph of this matrix consists just of one
cycle of length $3$, and there are $3$ non-critical nodes.

\begin{figure}[h]
\begin{tikzpicture}[shorten >=1pt,->]
  \tikzstyle{critical1}=[circle,fill=black!10,minimum size=17pt,inner sep=1pt]
\tikzstyle{critical2}=[circle,fill=black!20,minimum size=17pt,inner sep=1pt]
\tikzstyle{critical3}=[circle,fill=black!30,minimum size=17pt,inner sep=1pt]
\tikzstyle{noncritical}=[circle,fill=black!60,minimum size=17pt,inner sep=1pt]

    \node[critical1,xshift=0cm,yshift=0cm] (1) at (-150:1.2cm) {$1$};
    \node[critical2,xshift=0cm,yshift=0cm] (2) at (90:1.2cm) {$2$};
    \node[critical3,xshift=0cm,yshift=0cm] (3) at (-30:1.2cm) {$3$};
\node[noncritical,xshift=3cm,yshift=0cm] (4) {$4$};
\node[noncritical,xshift=4.5cm,yshift=0cm] (5) {$5$};
\node[noncritical,xshift=6cm,yshift=0cm] (6) {$6$};
\draw (1) -- (2);
\draw (2) -- (3);
\draw (3) -- (1);
\end{tikzpicture}
\caption{Critical graph and non-critical nodes of \eqref{algoex}\label{f:algoex}}
\end{figure}
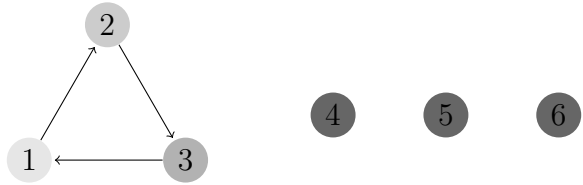

The core matrix in this case is equal to
\begin{equation*}
A^{\Core}=
\begin{pmatrix}
0 & -10 &   -7 & -3\\
-1 & -1  &  -1  &  -3\\
-1 & -2  &  -4  &  -1\\
-2 & -1  &  -4  &  -1
\end{pmatrix}
\end{equation*}

Vector $h=(-10\; -7\; -3)^T$, whose components
are computed by
\begin{equation}
\label{hexampl}
h_i=\bigoplus_{k=1}^3 a_{ki},\ \text{for $i=4,5,6$,}
\end{equation}
comprises $2,3,4$-components of the first row of $A^{\Core}$.
The arguments of maxima in \eqref{hexampl} give, after the cyclic shift by
one position, the Boolean vectors
\begin{equation}
P_4=(1\; 0\; 1),\ P_5=(0\; 1\; 0),\ P_6=(0\; 1\; 0).
\end{equation}
These vectors encode, for the corresponding non-critical nodes
$t=4,5,6,$ the starting cyclic classes (here, just critical nodes!)
of paths which go from $\crit(A)$ directly to $t$ and whose length is $3$.

The non-critical principal submatrix of $A$ and its Kleene star
are equal to
\begin{equation*}
B=
\begin{pmatrix}
 -1  &  -1  &  -3\\
 -2  &  -4  &  -1\\
 -1  &  -4  &  -1
\end{pmatrix},\quad
B^*=
\begin{pmatrix}
 0  &  -1  &  -2\\
 -2  &  0  &  -1\\
 -1  &  -2  &  0
\end{pmatrix}
\end{equation*}
The lengths of optimal non-critical paths (whose weights are entries
of $B^*$) can be written in the matrix of sets
\begin{equation}
U=
\begin{pmatrix}
\{0\} & \{1\} & \{2\}\\
\{1,2\} & \{0\} & \{1\}\\
\{1\} & \{2\} & \{0\}
\end{pmatrix}
\end{equation}

Further we compute
\begin{equation*}
h^T\otimes B^*=(-10\;-7\; -3)\otimes
\begin{pmatrix}
0 & -1 & -2\\
-2 & 0 & -1\\
-1 & -2 & 0
\end{pmatrix}=
(-4\; -5\; -3)
\end{equation*}

The maxima in $\bigoplus_t h_t b^*_{ti}$ for all $i$ are achieved
only at $t=6$, so $W_4=W_5=W_6=\{6\}$. Hence $G_4$, $G_5$ and $G_6$
are shifted $P_6$ and the shift is determined by the components in
the last row of $U$ which is $(\{1\}\; \{2\}\; \{0\})$. From
$P_6=(0\; 1\; 0)$ we conclude that
\begin{equation*}
G_4=(0\; 0\; 1),\ G_5=(1\; 0\; 0),\ G_6=(0\; 1\; 0).
\end{equation*}
By Proposition \ref{p:gti} we have that $G_i(t)=1$ if and only
if $t\in K^{(r)}(i)$ (where $r\geq T(A)$ is a multiple
of $\caa=3$). Using this rule we obtain that
$K^{(r)}(1)=\{5\}$, $K^{(r)}(2)=\{6\}$, $K^{(r)}(3)=\{4\}$,
and using the vector of coefficients
$h^T\otimes B^*=(-4\; -5\; -3)$,
we can write out the system for attraction cone
\begin{equation}
\label{attrsysex}
x_1\oplus(x_5-5)=x_2\oplus(x_6-3)=x_3\oplus(x_4-4).
\end{equation}

To verify this result, we observe
that in our case $T(A)=8$ and
\begin{equation*}
\begin{split}
A^8&=
\begin{pmatrix}
-1 & -1 & 0 & -4 & -6 & -4\\
0 & -1 & -1 & -5 & -5 & -4\\
-1 & 0 & -1 & -5 & -6 & -3\\
-2 & -1 & -2 & -6 & -1 & -4\\
-2 & -1 & -1 & -5 & -7 & -4\\
-2 & -3 & -2 & -6 & -7 & -6
\end{pmatrix}\\
A^9&=
\begin{pmatrix}
0 & -1 & -1 & -5 & -5 & -4\\
-1 & 0 & -1 & -5 & -6 & -3\\
-1 & -1 & 0 & -4 & -6 & -4\\
-2 & -2 & -1 & -5 & -7 & -5\\
-1 & -2 & -1 & -5 & -6 & -5\\
-2 & -2 & -3 & -7 & -7 & -5
\end{pmatrix}\\
\end{split}
\end{equation*}

Applying cancellation to the critical subsystem of
$A^8\otimes x=A^9\otimes x$, we obtain \eqref{attrsysex}.

\section{Acknowledgement}

The author is grateful to Peter Butkovi\v{c} for
valuable discussions and comments concerning this work,
and also to Hans Schneider for sharing his
original point of view on nonnegative matrix scaling
and max algebra. The author also wishes to thank
Trivikram Dokka for sharing his helpful experience in attraction cones
\cite{Dok:08}.

\end{document}